\newtheorem{theorem}{Theorem}
\newtheorem{lemma}[theorem]{Lemma}
\newtheorem{corollary}[theorem]{Corollary}
\theoremstyle{definition}
\newtheorem{definition}[theorem]{Definition}
\newtheorem{example}[theorem]{Example}
\newtheorem*{unnumlemma}{Lemma}
\theoremstyle{remark}
\newtheorem{remark}[theorem]{Remark}
\newcommand{\nc}{\newcommand}
\newcommand{\renc}{\renewcommand}
\nc{\wt}{\widetilde}
\nc{\mc}{\mathcal}
\nc{\defeq}{:=}
\nc{\R}{\mathbb{R}}
\renc{\P}{\mathbb{P}}
\nc{\E}{\mathbb{E}}
\nc{\ol}{\overline}
\nc{\on}{\operatorname}
\nc{\Z}{\mathbb{Z}}
\nc{\Q}{\mathbb{Q}}
\nc{\be}{\begin{equation}}
\nc{\ee}{\end{equation}}
\title{\vspace*{-0.75in}Universality theorems for zeros of random real polynomials \\ with fixed coefficients}
\author{Matthew C. King and Ashvin A.~Swaminathan}
\date{\today}
\begin{document}

\maketitle

\begin{abstract}
\vspace*{-5pt}
Consider a monic polynomial of degree $n$ whose subleading coefficients are independent, identically distributed, nondegenerate random variables having zero mean, unit variance, and finite moments of all orders, and let $m \geq 0$ be a fixed integer. We prove that such a random monic polynomial has exactly $m$ real zeros with probability $n^{-3/4+o(1)}$ as $n\to \infty$ through integers of the same parity as $m$. More generally, we determine conditions under which a similar asymptotic formula describes the corresponding probability for families of random real polynomials with multiple fixed coefficients. Our work extends well-known universality results of Dembo, Poonen, Shao, and Zeitouni, who considered the family of real polynomials with all coefficients random. 

 As a number-theoretic consequence of these results, we deduce that an algebraic integer $\alpha$ of degree $n$ has exactly $m$ real Galois conjugates with probability $n^{-3/4+o(1)}$, when such $\alpha$ are ordered by the heights of their minimal polynomials.
\end{abstract}

\section{Introduction} \label{sec-theintro}

Let $(a_i)_{i \in \mathbb{N}}$  denote a sequence of independent and identically distributed (i.i.d.) random variables of zero mean and unit variance possessing finite moments of all order. Consider the polynomials
$$f_n(x) \defeq x^{n-1} + \sum_{i= 2}^n a_{n-i}x^{n-i}, \quad \text{and} \quad f_n^*(x) \defeq \sum_{i= 1}^n a_{n-i}x^{n-i}.$$
Note that $f_n$ is \emph{monic} with all subleading coefficients random, whereas $f_n^*$ has all coefficients random. For $n$ odd, let $P_n$ (resp., $P_n^*$) be the probability that $f_n(x)$ (resp., $f_n^*(x)$) is everywhere positive; i.e., define
$$P_n \defeq \mathbb{P}\big(f_n(x) > 0,\, \forall x \in \R\big), \quad \text{and} \quad P_n^* \defeq \mathbb{P}\big(f_n^*(x) > 0,\, \forall x \in \R\big).$$
In~\cite{MR1915821}, Dembo, Poonen, Shao, and Zeitouni (henceforth, DPSZ) studied the asymptotic behavior of $P_n^*$ in the limit as $n \to \infty$. Strikingly, they proved that there exists a universal positive constant $b$, independent of $n$, such that $P_n^* = n^{-b+o(1)}$ (i.e., the limit $\lim_{n \to \infty} \log_n P_{n}^*$ exists and equals $-b$); see Theorem~1.1(a) in \emph{loc.~cit}. They further showed that $0.4\leq b\leq 2$ and performed numerical simulations \mbox{indicating that $b \approx 0.76$.} It was later determined in works of Poplavskyi and Schehr~\cite[Equation (4)]{PhysRevLett.121.150601} and FitzGerald, Tribe, and Zaboronski~\cite[Proposition~5]{MR4499280} that $b = 3/4$; for further discussion of the literature on the constant $b$, see Remark~\ref{rmk-b} (to follow).

  In light of these remarkable results, it is natural to ask the following pair of questions:
\begin{enumerate}
\item Does the above asymptotic formula for the full family of random polynomials $f_n^*(x)$ admit an analogue for the special family of monic random 
polynomials $f_n(x)$? More specifically, is there a constant $b' > 0$ such that $P_n = n^{-b' + o(1)}$?
\item If the answer to the first question is yes, then how is $b'$ related to $b = 3/4$? Are they equal?
\end{enumerate}
In this paper, we give an affirmative answer to both of the above questions. Indeed, we prove:
\begin{theorem} \label{thm-mainmon}
    We have that $P_n = n^{-3/4 + o(1)}$. 
\end{theorem}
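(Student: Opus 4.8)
The plan is to reduce the monic case to the known result of DPSZ for the fully-random case by a combination of (i) a "persistence/slow-decorrelation" decomposition that separates the behavior of $f_n$ on three regimes — near $x = \pm 1$ (the "bulk" scale $|x-1| \asymp 1/n$, $|x+1|\asymp 1/n$), in the intermediate ranges, and near $x = 0$ and $x = \infty$ — and (ii) showing that forcing the \emph{leading} coefficient to equal $1$ (rather than letting it be a generic nondegenerate random variable) changes the persistence exponent by a $o(1)$ amount. The upper bound $P_n \le n^{-3/4+o(1)}$ should follow relatively cheaply: if $f_n(x) > 0$ for all $x$, then in particular $f_n^{\mathrm{rev}}(x) = x^{n-1}f_n(1/x) = 1 + \sum_{i=2}^n a_{n-i}x^{i-1}$ is positive on $(0,\infty)$, and after a harmless reindexing this is an instance (up to the one fixed coefficient, and up to restricting to a half-line) of the DPSZ setup; one extracts the exponent $3/4$ on the half-line via the standard reflection trick and the fact that positivity on all of $\R$ for a degree-$(n-1)$ polynomial of odd degree minus one, i.e.\ even degree, localizes to the two endpoints $x=\pm 1$ in a way already understood.

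**The core of the argument** is the lower bound $P_n \ge n^{-3/4-o(1)}$, and the main technical device will be a comparison between the monic ensemble $f_n$ and the fully-random ensemble $f_n^*$. I would write $f_n(x) = x^{n-1} + g(x)$ where $g(x) = \sum_{i=2}^n a_{n-i}x^{n-i}$ has degree $\le n-2$, and condition on the event that $f_n^*$ (with an appropriately chosen, nondegenerate leading coefficient distribution) is positive: on the natural scale near $x = 1$, the deterministic shift contributed by pinning the top coefficient to $1$ is lower-order compared to the typical size of the Gaussian-like field $g$, because near $x = 1$ the polynomial is a sum of $\asymp n$ comparably-sized terms, so its standard deviation is $\asymp \sqrt n \gg 1$. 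Thus conditioning on the exact value of one coefficient perturbs the relevant Gaussian field by $O(1)$, which by the Cameron–Martin / Gaussian correlation-type estimates (or, in the non-Gaussian regime, by the strong-approximation/KMT coupling already used in DPSZ) costs only a sub-polynomial factor $n^{o(1)}$ in the persistence probability. Away from $x = \pm 1$ — near $0$ and near $\infty$ — the polynomial's sign is essentially determined by its lowest-order (resp.\ highest-order) terms, and pinning the \emph{leading} coefficient to be $+1$ is in fact helpful there (it guarantees $f_n(x) \to +\infty$ as $x \to \pm\infty$ with no probabilistic cost).

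**Concretely**, the steps I would carry out are: (1) Set up the standard three-scale decomposition of the real line for an even-degree polynomial (degree $n-1$ with $n$ odd means $f_n$ has even degree $n-1$, so positivity everywhere is possible), following Section 2–3 of DPSZ: the windows $|x \mp 1| \le \delta$, the "tails" $|x| \le 1-\delta$ and $|x| \ge 1 + \delta$, and handle each with independence-across-scales up to $n^{o(1)}$ errors. (2) On the tail regimes, show positivity of $f_n$ holds with probability bounded below by a constant (here monicity only helps). (3) On each of the two critical windows near $\pm 1$, rescale: $f_n(1 + t/n)$ converges, after normalization by its standard deviation, to a universal Gaussian process (this is exactly the local limit DPSZ identify, whose persistence exponent on a half-line is $3/4$ and on a window is what produces the $-3/4$), and crucially the contribution of the pinned leading coefficient $x^{n-1}$ at $x = 1 + t/n$ is $e^{t}(1+o(1))$, a bounded deterministic function on the window, hence absorbable. (4) Combine via a Markov-property/FKG-type gluing argument to get $P_n = n^{-3/4 + o(1)}$.

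**The main obstacle** is step (3): making rigorous the claim that pinning one coefficient (a codimension-one conditioning) does not move the persistence exponent. In the Gaussian case this is a Cameron–Martin absolute-continuity statement combined with a quantitative lower bound on persistence probabilities under bounded shifts of the mean; in the general i.i.d.\ case one must instead run the KMT/Komlós–Major–Tusnády strong approximation exactly as in DPSZ, but now tracking that the coupling error plus the deterministic shift from the fixed coefficient are together $n^{o(1)}$-negligible on every scale — the delicate point being uniformity of this over the critical window and the matching of the two one-sided exponents at $x=+1$ and $x=-1$. I expect that once the DPSZ machinery is invoked as a black box for the local process and its persistence exponent, the additional work specific to the monic case is precisely this perturbation estimate plus the elementary (but necessary) verification that an even-degree monic polynomial with $+1$ leading term has no "extra" obstruction to global positivity beyond the two local ones near $\pm 1$.
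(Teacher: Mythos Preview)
Your proposal is essentially correct and follows the same approach the paper takes: the core observation is that after normalizing by the standard deviation $\sigma_{n-1}(x)$, the contribution of the fixed monic term $x^{n-1}$ is a deterministic perturbation that vanishes uniformly on the critical region near $|x|=1$, so one can invoke the DPSZ machinery (Slepian in the Gaussian case, KMT strong approximation in general) for the fully random degree-$(n-2)$ piece against a shifted threshold. The paper frames this not via Cameron--Martin but by literally moving the fixed term to the right-hand side and checking that the resulting threshold function $\gamma_n^*(x) = -x^{n-1}/\sigma_{n-1}(x)$ tends to zero on $\{|x| : ||x|-1| \le n^{-\varepsilon_n}\}$ (your ``bounded'' in step (3) should in fact be ``$o(1)$ after normalization,'' which is what actually makes the DPSZ threshold results apply directly); your upper-bound sketch via the reversed polynomial is the weakest part, since the reversed polynomial still carries a fixed coefficient and restricting to a half-line only recovers half the exponent, but the same perturbation-to-threshold reduction you describe for the lower bound handles the upper bound symmetrically, exactly as in the paper's \S2.2.2.
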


More generally, take $j \in \mathbb{N}$, take $n \geq j$ to be of parity different from that of $j$, and let $P_{n,j}$ (resp., $P_{n,j}^*$) be the probability that $f_n(x)$ (resp., $f_n^*(x)$) has exactly $j$ simple real zeros; i.e., define
$$P_{n,j} \defeq \P\big(\#\{x \in \R : f_n(x) = 0,\, f_n'(x) \neq 0\} = j\big), \,\, \text{and} \,\, P_{n,j}^* \defeq \P\big(\#\{x \in \R : f_n^*(x) = 0,\, f{_n^*}'(x) \neq 0\} = j\big).$$
In this setting, one can ask how $P_{n,j}$ and $P_{n,j}^*$ behave asymptotically as $n \to \infty$. When $j = 0$, we have that $P_{n,0} = P_n$ and $P_{n,0}^* = P_n^*$, and the aforementioned asymptotic formulas apply. In~\cite{MR1915821}, DPSZ prove that, for the full family of random polynomials $f_n(x)$, the same asymptotic formula holds --- i.e., we have that $P_{n,j}^* = n^{-3/4+o(1)}$ --- for each $j$, all the way up to $j = o(\log n/\log\log n)$; see Theorem~1.2 in \emph{loc.~cit}. Our next result gives the analogue for the family of random monic polynomials $f_n^*(x)$:

\begin{theorem} \label{thm-locmainmon}
    Let $j \in \mathbb{N}$ be such that $j \equiv n-1 \pmod 2$. Then we have that $P_{n,j} = n^{-3/4 + o(1)}$ as $n \to \infty$. In fact, $f_n(x)$ has at most $o(\log n/\log\log n)$ real zeros with probability $n^{-3/4 + o(1)}$.
\end{theorem}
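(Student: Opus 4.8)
The plan is to reduce the monic case $P_{n,j}$ to the DPSZ result for the full-coefficient case $P_{n,j}^*$ by a careful comparison argument, exploiting the fact that a monic polynomial of degree $n-1$ differs from a random polynomial only in its leading behavior. The key heuristic is that the sign of $f_n(x)$ for $|x|$ large is dictated by the leading term $x^{n-1}$, so the ``real zeros come from the bulk'' where the monic constraint is a lower-order perturbation; meanwhile the small-$|x|$ and moderate-$|x|$ behavior of $f_n$ should look statistically like that of a genuinely random polynomial of one smaller degree. Concretely, I would split the real line into the regimes $|x| \le 1-\varepsilon_n$, $1-\varepsilon_n \le |x| \le 1+\varepsilon_n$, and $|x| \ge 1+\varepsilon_n$ (and their reflections), for a suitable slowly-shrinking $\varepsilon_n$, and handle each regime separately, just as in the DPSZ analysis of $P_n^*$.

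For the outer regime $|x| \ge 1+\varepsilon_n$ I expect to show that, with probability $1 - n^{-\omega(1)}$ (superpolynomially close to $1$), $f_n$ has no real zeros there: the monic term $x^{n-1}$ dominates $\sum_{i=2}^n a_{n-i} x^{n-i}$ because the random part is a polynomial of degree $n-2$ with polynomially-bounded coefficients (using finiteness of all moments plus a union bound / Borell-type concentration), so $f_n(x)$ inherits the sign of $x^{n-1}$ there. The same holds for the region very near $x = \pm 1$ after a dyadic decomposition and small-ball/anti-concentration estimates controlling the number of sign changes on each dyadic block, contributing at most $O(1)$ zeros with the stated probability bound. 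The heart of the matter is the inner regime $|x| < 1 - \varepsilon_n$: here I would argue that $f_n(x) = x^{n-1} + g(x)$ where $g(x) = \sum_{i=2}^n a_{n-i}x^{n-i}$, and on $|x| \le 1-\varepsilon_n$ the tail contribution of $x^{n-1}$ is exponentially small relative to the typical size of $g$, so the zero set of $f_n$ on this region is a small perturbation of the zero set of $g$, which is (a slight variant of) the fully-random polynomial $f_n^*$. One then transfers the DPSZ estimates — both the upper bound $P_{n,j}^* \le n^{-3/4+o(1)}$ and the matching lower bound, together with their control on having at most $o(\log n/\log\log n)$ zeros — through this comparison.

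The main obstacle, I anticipate, is making the perturbation argument in the inner regime fully rigorous at the level of \emph{probabilities of rare events}: we are asserting a probability of size $n^{-3/4+o(1)}$, which is polynomially small, so the error terms coming from the outer regimes and from the $x^{n-1}$ perturbation must be shown to be \emph{superpolynomially} small (i.e. $n^{-A}$ for every $A$), not merely $o(1)$ or even polynomially small — otherwise they would swamp the main term. This forces one to prove strong large-deviation bounds: that $f_n$ behaves like $f_n^*$ not just typically but on the complement of an event of probability $n^{-\omega(1)}$. For the lower bound on $P_{n,j}$ this is delicate because one must produce, with probability $n^{-3/4+o(1)}$, a configuration of coefficients for which $f_n$ is positive (or has exactly $j$ zeros), and show the monic constraint does not destroy the DPSZ construction; I would handle this by conditioning on the high-order coefficients $a_{n-2}, \dots, a_{n-k}$ (for fixed $k$) and showing the conditional law of the remaining polynomial still supports the DPSZ event with comparable probability. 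Throughout, the finite-moments hypothesis is used exactly to get the needed polynomial bounds on $\max_i |a_i|$ over the first $n$ terms via Markov's inequality at a high moment, which is what upgrades ``typical'' statements to ``superpolynomially likely'' ones.

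I would also remark that Theorem~\ref{thm-mainmon} is the special case $j=0$ of Theorem~\ref{thm-locmainmon} (since $P_{n,0} = P_n$ and, for $n$ odd, $0 \equiv n-1 \pmod 2$), so it suffices to prove the latter; and that the same machinery, with the appropriate bookkeeping of which coefficients are fixed, will yield the more general multi-fixed-coefficient statements promised in the abstract.
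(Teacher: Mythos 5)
Your overall plan—compare the monic $f_n$ to a fully random polynomial, localize the interesting behavior near $|x|=1$, and import DPSZ's machinery—is the right spirit, and it roughly matches the paper's upper-bound argument (\S4.2), which restricts to the union $V$ of four intervals near $\pm 1$, counts choices of intervals to ignore and signs, peels off the fixed-coefficient terms, and applies the DPSZ bound to the remaining fully random polynomial. Your observation that the moment hypotheses give superpolynomially small error probabilities (via Markov at high moments) is also the mechanism the paper uses.

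However, your proposal has a genuine gap in the lower bound, and it is precisely the gap the paper has to work hardest to close. You propose to "condition on the high-order coefficients $a_{n-2},\dots,a_{n-k}$ and show the conditional law of the remaining polynomial still supports the DPSZ event with comparable probability." This cannot work as stated in the parity case where $n-1$ and $j$ are both odd. In that case, DPSZ's lower-bound construction (\cite[\S 8.2]{MR1915821}) does not merely require the high-degree coefficients to be bounded; it pins both the low-degree \emph{and} high-degree coefficients—including the leading one—into specific tiny ranges around prescribed values. In the monic setting the leading coefficient is not random at all (it is the constant $1$), so it cannot be placed into an arbitrary prescribed range, and no amount of conditioning on the other coefficients repairs this. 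The paper explicitly flags this as the obstruction and resolves it by designing a new polynomial $B(x)$ (built from auxiliary polynomials $Q$ and $R$) together with a new sufficient condition $\mathbf{K1}$–$\mathbf{K4}$ (Lemma~\ref{lem-hconds2}) under which the zero count is controlled by the \emph{low}-degree coefficients alone, with only a crude boundedness requirement $\mathbf{K2}$ on the high-degree ones—a condition the monic constraint trivially satisfies. Your proposal contains no analogue of this construction, and without it the odd-parity lower bound does not go through.

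A secondary issue is that your "inner regime" perturbation heuristic—that the zero set of $f_n$ on $|x|<1-\varepsilon_n$ is a small perturbation of the zero set of $g(x)=\sum_{i\ge 2} a_{n-i}x^{n-i}$—is not how the lower bound is actually proved, and making it rigorous would require delicate anti-concentration statements about $g$ near its zeros (to rule out pairs of zeros appearing or disappearing under an $e^{-n^{1-\delta}}$-size perturbation). The paper sidesteps this entirely: rather than perturbing a DPSZ event, it directly constructs explicit coefficient events ($\mathbf{H}$ or $\mathbf{K}$) whose simultaneous occurrence deterministically forces $f_{n,S}$ to have exactly $j$ simple real zeros, and then computes the probability of that event using independence of the three coefficient blocks and the DPSZ estimate for the middle block only. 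You would do well to adopt that explicit-event strategy rather than attempt a probabilistic perturbation argument.
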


Roughly speaking, our methods to prove Theorems~\ref{thm-mainmon} and~\ref{thm-locmainmon} involve separating the behavior of the leading term of the random monic polynomial from that of the remaining terms, which comprise a fully random polynomial of one smaller degree. The behavior of this lower-degree polynomial can then be analyzed using results obtained by DPSZ in the course of proving their asymptotic formulas in the context of fully random polynomials. It is natural to expect that similar arguments might work much more generally, to prove analogues of Theorems~\ref{thm-mainmon} and~\ref{thm-locmainmon} for random polynomials with multiple fixed coefficients. In this paper, we actually prove these more general theorems, the statements of which are given in \S\ref{sec-stategens}, and we deduce Theorems~\ref{thm-mainmon} and~\ref{thm-locmainmon} as consequences.

\subsection{Generalizations of Theorems~\ref{thm-mainmon} and~\ref{thm-locmainmon} to polynomials with multiple fixed coefficients} \label{sec-stategens}

As alluded to above, our methods allow us to generalize Theorems~\ref{thm-mainmon} and~\ref{thm-locmainmon} in two different directions. Firstly, we can fix the values of several coefficients, not just the leading coefficient; and secondly, we can replace the condition of positivity with a much stronger condition. To state these generalizations, we require some further notation. Let $k \in \Z_{> 0}$ be fixed, let $S \subset \{1, \dots, k\}$ be a subset containing $k$, and for each $i \in S$, fix a number $c_i \in \R$; if $1 \in S$, take $c_1 > 0$. Consider the polynomial
$$f_{n,S}(x) \defeq \sum_{i \in S} c_ix^{n-i} + \sum_{\substack{i \in \{1, \dots,n\} \\ i \not\in S}} a_{n-i}x^{n-i},\vspace*{-0.2cm}$$
which has random coefficients with the exception of the terms $c_i x^{n-i}$ for $i \in S$. Let $(\gamma_n)_n$ be a sequence of non-random functions on $\R$, and assume that exists $\delta > 0$ for which $n^{\delta}|\gamma_n(x)| \to 0$ uniformly over $x \in \R$. For $n$ odd, let $P_{n,S,\gamma_n}$ be the probability that the \emph{normalized} polynomial
\begin{equation} \label{eq-thehats}
\hat{f}_{n,S}(x) \defeq \frac{f_{n,S}(x)}{\sqrt{\E\big(f_{n,S}(x)^2\big)}}
\end{equation}
is everywhere bigger than $\gamma_n(x)$; i.e., define
$$P_{n,S,\gamma_n} \defeq \mathbb{P}\big(\hat{f}_{n,S}(x) > \gamma_n(x),\, \forall x \in \R\big).$$
Note that demanding $\hat{f}_{n,S}(x)$ to be everywhere greater than $\gamma_n(x)$ is a considerably stronger condition than merely asking $f_{n,S}(x)$ to be everywhere positive!

One cannot reasonably expect that an unconditional analogue of Theorem~\ref{thm-mainmon} would hold with $P_n$ replaced by, say, $P_{n,S,0}$ (where by the subscript ``$0$'' we mean that $\gamma_n  \equiv 0$ is taken to be identically zero for each $n$). Indeed, one can choose the \emph{coefficient data} --- i.e., the data of the set $S$, the fixed coefficients $c_i$ for $i \in S$, and the distribution of the random coefficients $a_i$ --- in such a way that $P_{n,S,0} = 0$ for infinitely many $n$. As we detail in Example~\ref{eg-probzero} (to follow), one way to do this is to choose the distribution of the $a_i$ to be bounded in absolute value by some constant $C$, and to choose some of the $c_i$ to be considerably less than $-C$. We then analyze the positivity of $f_{n,S}(x)$ on three ranges of $|x|$. First, when $|x|< 1$, the fixed terms matter little, and the lower-degree random terms dominate, making $f_{n,S}(x) > 0$ with positive probability. The same  occurs when $|x| -1$ is sufficiently large, in which case the leading term is positive and dominates over all other terms. But in the middle range, when $0 \leq |x| -1$ is sufficiently small, the fixed terms can contribute large negative quantities to the value of $f_{n,S}(x)$, forcing it to be negative regardless of the values of the random coefficients. 

The upshot is that, if we are to prove that $P_{n,S,\gamma_n}$ obeys an asymptotic similar to that which we obtained for $P_n$ in Theorem~\ref{thm-mainmon}, then we must impose a nontrivial condition on the coefficient data, one that addresses the potential for $f_{n,S}(x)$ to be negative on the aforementioned ``middle range'' of $|x|$. To this end, we shall stipulate the coefficient data be ``nice,'' where the notion of niceness is defined as follows: 

\begin{definition} \label{def-nice}
    With notation as above, we say that the coefficient data are \emph{nice} if there exists an even integer $s > k$ such that we have 
    $$\mathbb{P}\big(f_{s,S}(x) \neq 0, \, \forall x \text{ s.t.~} |x| > 1\big) > 0.$$
\end{definition}

Conditional on the coefficient data being nice, we prove the following generalization of Theorem~\ref{thm-mainmon} about the asymptotic behavior of $P_{n,S,\gamma_n}$:
\begin{theorem} \label{thm-main}
  We have the upper bound $P_{n,S,\gamma_n} \leq n^{-3/4 + o(1)}$. Furthermore, if the chosen data are nice, then we have equality $P_{n,S,\gamma_n} = n^{-3/4 + o(1)}$.
\end{theorem}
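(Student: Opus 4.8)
The plan is to prove Theorem~\ref{thm-main} by reducing the analysis of $\hat f_{n,S}$ to that of a fully random polynomial of degree roughly $n-k$, so that the DPSZ machinery can be imported essentially as a black box. First I would dispose of the upper bound, which should not require niceness: on the range $|x| \leq 1-n^{-\epsilon}$ (say), the leading fixed terms $c_i x^{n-i}$ are exponentially small, so $f_{n,S}(x)$ agrees up to negligible error with a genuinely fully random polynomial $g(x) = \sum_{i \notin S, i \leq n} a_{n-i} x^{n-i}$; a matching statement holds for $x \mapsto x^n f_{n,S}(1/x)$ on the same range, covering $|x| \geq 1+n^{-\epsilon}$ by the usual $x \leftrightarrow 1/x$ symmetry of the real-root count. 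Requiring $\hat f_{n,S} > \gamma_n$ \emph{everywhere} is at least as strong as requiring $f_{n,S}$ to be sign-definite on, say, the union of the intervals where $g$ (or its reciprocal) would need to be sign-definite; DPSZ's upper bound $P_n^* \leq n^{-3/4+o(1)}$ (their Theorem~1.1(a), or rather the quantitative persistence estimates behind it) then gives $P_{n,S,\gamma_n} \leq n^{-3/4+o(1)}$. The normalization by $\sqrt{\E(f_{n,S}(x)^2)}$ and the subtraction of $\gamma_n(x)$, with $n^\delta |\gamma_n| \to 0$, only perturb things by $o(1)$ in the exponent and should be absorbed into the $o(1)$.

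For the lower bound, assuming niceness, the idea is to build an event of probability $n^{-3/4+o(1)}$ on which $\hat f_{n,S} > \gamma_n$ on all of $\R$, by splitting $\R$ into three regions as foreshadowed in the introduction: an inner region $|x| \le 1-c/n$ or so, an outer region $|x| \ge 1+c/n$, and a boundary annulus $|x|-1 = O(1/n)$ (equivalently $1/n$-neighborhoods of $\pm 1$). On the inner region, after peeling off the negligible fixed leading terms, we need the fully random tail $g(x)$ to be positive (and bounded below in the normalized sense) for all $|x| \le 1-c/n$; this is exactly the type of event DPSZ show occurs with probability $n^{-3/4+o(1)}$ — more precisely, one uses their result that a fully random polynomial of degree $m$ is positive on $[-1+c/m, 1-c/m]$, or on all of $[-1,1]$, with that probability, together with the analogous reciprocal statement for the outer region. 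The boundary annulus near $x=1$ is where niceness enters: here one wants to use the fixed coefficients $c_i$ (and a constant-sized block of the random coefficients $a_i$ for $i$ small) to force $f_{n,S}$ to behave like $x^{n-s} f_{s,S}(x)$ — times a positive factor — near $x=1$, and niceness says precisely that $f_{s,S}$ is sign-definite (nonzero) on $|x|>1$ with positive probability, which after fixing the sign gives a positive-probability event making $f_{n,S}$ sign-definite on a fixed-width neighborhood of $1$, hence a fortiori on the shrinking annulus. One then needs a matching-up/gluing argument showing these three events — on the inner region, the outer region, and the two boundary annuli — can be realized simultaneously (they depend on largely disjoint or weakly coupled blocks of coefficients, and one can use independence plus a continuity/robustness argument à la DPSZ to decouple them), and that the conjunction still has probability $n^{-3/4+o(1)}$: the inner and outer events each cost $n^{-3/4+o(1)}$ but, by the reciprocal symmetry relating them, they are not independent and the combined cost remains $n^{-3/4+o(1)}$ rather than $n^{-3/2+o(1)}$, while the boundary events cost only $O(1)$.

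The main obstacle I expect is this gluing step: making precise the claim that controlling $g$ on $[-1+c/n,1-c/n]$ and on the reciprocal range, plus controlling a bounded number of coefficients to win the boundary annulus, can be combined into a single event of the right probability, and that nothing goes wrong in the transition regions. This requires (i) quantitative lower bounds, not just the logarithmic asymptotics, for the DPSZ positivity events — i.e., one must go into their proof and extract an event of probability $n^{-3/4+o(1)}$ that is moreover ``stable'' under bounded perturbations and under conditioning on a few coefficients — and (ii) a careful accounting of which coefficients each sub-event depends on, so that independence can be leveraged. A secondary technical point is handling the normalization $\sqrt{\E(f_{n,S}(x)^2)}$ uniformly across the three regions (it is comparable to $\max(1,|x|)^{n-k}$-type quantities up to polynomial factors, so dividing by it and comparing with $\gamma_n$ is harmless), and confirming that the niceness hypothesis is not only sufficient but genuinely needed, which is what Example~\ref{eg-probzero} is for. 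Once the gluing is in place, Theorems~\ref{thm-mainmon} and~\ref{thm-locmainmon} follow by checking that the monic case ($k=1$, $S=\{1\}$, $c_1=1$, $\gamma_n\equiv 0$) is automatically nice — e.g. because $f_{s,S}(x) = x^{s-1}+\cdots$ has no real roots outside $[-1,1]$ with positive probability for suitable small even $s$ — and, for the local statement, by combining with DPSZ's count-$j$-roots refinement in the same three-region framework.
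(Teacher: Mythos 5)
Your high-level intuitions — peel off the fixed leading terms on the bulk of $[-1,1]$, invoke DPSZ there, use niceness near $|x|=1$, and exploit the $x\leftrightarrow 1/x$ reciprocal — point in the right direction, and the observation that niceness is what saves you in the boundary annulus is essentially the role it plays in the paper. But there are two genuine gaps, one of which you flag yourself but do not resolve, and one which you do not see at all.

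The gap you do not mention is the reduction to Gaussian coefficients. You say ``apply DPSZ's upper bound'' as though the persistence estimates transfer directly, but DPSZ's core arguments (and the quantitative estimates behind the $n^{-3/4}$ exponent) are Gaussian: the upper bound is a Slepian covariance-comparison argument, and the lower bound on the central interval rests on Gaussian process estimates from their Section~3. For general coefficient laws, DPSZ — and this paper — use the Koml\'os--Major--Tusn\'ady strong approximation to couple the $(a_i)$ with i.i.d.\ Gaussians $(b_i)$ so that the two polynomials differ by $n^{o(1)}$ uniformly on $[-1,1]$; one then transfers the Gaussian result across. That coupling, plus the fact that one must re-prove (not just cite) the covariance upper bound of \cite[Lemma~4.1]{MR1915821} for the modified covariance $\widetilde c_n$ of $\hat f_{n,S}^b$ (this is Lemma~\ref{lem-upbound} here, and it is genuinely new work — the fixed coefficients change the covariance structure), is what makes the upper bound go through. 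Your sketch treats both as automatic.

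The gap you do flag — the ``gluing'' of the three regions into a single event of probability $n^{-3/4+o(1)}$ — is indeed the crux, and the paper resolves it with tools you do not name. First, the multiplicative split across regions in the Gaussian case is \emph{not} by independence of coefficient blocks but by Slepian's lemma applied to a Gaussian process with nonnegative covariance (which, importantly, requires parity bookkeeping on $k$, hence the case split in \S\ref{sec-lowkeven}--\S\ref{sec-lowkodd}); Slepian is precisely what lets one write $\P(\inf_{x\in A\cup B} X > \lambda) \ge \P(\inf_A X > \lambda)\,\P(\inf_B X > \lambda)$ without paying for dependence. Second, for general coefficients the paper does \emph{not} split by $x$-range at the top level but by degree: $f_{n,S} = f^L + f^M + f^H$ (low, middle, high degree), where the three pieces are genuinely independent because they use disjoint blocks of coefficients, and the high-degree piece $f^H$ (which contains all the fixed $c_i$) is precisely where niceness is applied, via Lemma~\ref{lower}, to get a $n^{o(1)}$ bound. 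Your picture of ``control $g$ on the inner region and a few coefficients on the boundary'' is morally this, but without the degree-based independent decomposition there is no clean way to make the sub-events multiply. So: right instincts, but the engine (KMT reduction to Gaussian, Slepian's lemma, the $f^L+f^M+f^H$ decomposition, and the new covariance Lemma~\ref{lem-upbound}) is all missing.
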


\begin{remark}
     We note that many natural choices of coefficient data are nice. A few interesting examples of such choices are listed as follows:
     \begin{enumerate}
     \item Take the $a_i$ to be standard normal random variables, with any choice of fixed coefficients. In fact, the main results of~\cite{MR1915821}, along with Theorems~\ref{thm-main} and~\ref{thm-locmaingen} in the present paper, are proven by using strong approximation results to reduce to the Gaussian case; see Theorem~\ref{thm-maingauss} below.
     \item Take the distribution of the $a_i$ to be arbitrary (of zero mean, unit variance, and finite moments of all orders), but take $S$ to consist entirely of odd numbers and take $c_i > 0$ for each $i \in S$, so that the fixed terms have even degree and positive coefficients. This includes the case of monic polynomials considered in Theorems~\ref{thm-mainmon} and~\ref{thm-locmainmon}. In particular, these theorems follow immediately from Theorem~\ref{thm-main} and~\ref{thm-locmaingen} by setting $S = \{1\}$, $c_1 = 1$, and $\gamma_n \equiv 0$ for all $n$.
     \item Take the distribution of the $a_i$ to have sufficiently large support, relative to the data of the fixed coefficients. More precisely, for every choice of $S$ and $(c_i)_{i \in S}$, there exists a constant $M > 0$ depending on these choices such that the coefficient data are nice if $\P(a_i > M) > 0$.
     \end{enumerate}
     \end{remark}

As mentioned above, the key to proving Theorem~\ref{thm-main} is to prove the corresponding result where the random coefficients $a_i$ are assumed to be Gaussians. In this case, the result of Theorem~\ref{thm-main} holds under a much weaker assumption than stipulating that the functions $\gamma_n$ converge everywhere to zero. Indeed, we prove:

    \begin{theorem} \label{thm-maingauss}
        Suppose that $(a_i)_i$ are standard normal random variables, that $\sup\{\gamma_n(x) : x \in \R,\, n \in \mathbb{N}\} < 1$, and that there exists a sequence $(\varepsilon_n)_n$ of positive real numbers with limit $0$ such that
        $$\sup\big\{|\gamma_n(x)| : \big||x|-1\big| \leq n^{-\varepsilon_n}\big\} \to 0.$$
        as $n \to \infty$. Then we have that $P_{n,S,\gamma_n} = n^{-3/4+o(1)}$.
    \end{theorem}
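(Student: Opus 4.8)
The plan is to separate the normalized polynomial $\hat f_{n,S}(x)$ into the contribution of the fixed terms and the contribution of the genuinely random lower-degree polynomial, and then to exploit the quantitative Gaussian estimates that DPSZ establish in the fully-random case. Write $g_{n,S}(x) \defeq \sum_{i\notin S} a_{n-i} x^{n-i}$ for the random part and $h_{n,S}(x) \defeq \sum_{i\in S} c_i x^{n-i}$ for the deterministic part, so that $f_{n,S} = g_{n,S} + h_{n,S}$. Since the $c_i$ are fixed and there are only $k=O(1)$ of them, the deterministic part satisfies $|h_{n,S}(x)| = O(\max(1,|x|)^{n-1})$, while $\E(f_{n,S}(x)^2) = \E(g_{n,S}(x)^2) + h_{n,S}(x)^2$ grows like the number of non-fixed terms of size $\lesssim \max(1,|x|)^{2(n-1)}$. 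The first step is therefore to understand the normalizing factor $v_n(x) \defeq \sqrt{\E(f_{n,S}(x)^2)}$ precisely enough: away from $|x|=1$ it is comparable to $\max(1,|x|)^{n-1}\sqrt{n}$ up to constants, and near $|x|=1$ (say $\big||x|-1\big|\le n^{-\varepsilon_n}$) it is of order $n^{1-\varepsilon_n/2}$ or larger, which will dwarf the $O(1)$-sized fixed terms there. This is the step where the hypothesis $\sup|\gamma_n(x)|\to 0$ on the window $\big||x|-1\big|\le n^{-\varepsilon_n}$ gets used: on that window $\hat f_{n,S}$ and $g_{n,S}/v_n$ differ by $o(1)$, so the event $\{\hat f_{n,S} > \gamma_n\}$ there is essentially the event $\{g_{n,S} > o(1)\cdot v_n\}$.

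The second step is the \textbf{upper bound} $P_{n,S,\gamma_n}\le n^{-3/4+o(1)}$. Here one only needs to exhibit \emph{one} sub-event whose probability already has this order, or rather to bound $P_{n,S,\gamma_n}$ above by the DPSZ persistence probability. The cleanest route is to restrict attention to a bounded interval, say $x\in[1,2]$ or to $x$ in a neighborhood of $1$, where the fixed terms contribute only $O(1)$ while $v_n(x)$ is polynomially large in $n$; on such an interval the event $\{\hat f_{n,S}(x)>\gamma_n(x)\}$ implies $\{g_{n,S}(x) > -C v_n(x)\}$ for a constant $C$, and after rescaling $x = 1 + t/n$ (the natural scaling at the edge) $g_{n,S}$ converges, in the sense of DPSZ, to the same limiting stationary Gaussian process that governs $f_n^*$; no-sign-change of a Gaussian process on a long interval has probability decaying like a negative power, and DPSZ's analysis pins the exponent. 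I would mirror their argument: cover $[1,\infty)$ (and by the $x\mapsto 1/x$, $f\mapsto x^{n}f(1/x)$-type symmetry, also $[0,1]$, and then $x<0$) by $O(\log n)$ dyadic-type blocks, use the FKG/Slepian-type correlation inequalities to multiply the per-block non-crossing probabilities, and recover the exponent $3/4$ from the known behaviour of the limiting process. The point is that fixing $O(1)$ coefficients changes the covariance structure only in a lower-order way, so the same exponent survives; the fixed terms can only \emph{decrease} the persistence probability, which is all that is needed for the upper bound.

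The third step is the \textbf{matching lower bound} $P_{n,S,\gamma_n}\ge n^{-3/4+o(1)}$, and this is where the main work lies. One constructs an explicit event of probability $n^{-3/4+o(1)}$ on which $\hat f_{n,S}(x)>\gamma_n(x)$ for all $x\in\R$. Following the DPSZ strategy, split $\R$ into three regimes: (i) $|x|$ bounded away from $1$ on the large side, where the dominant behaviour is dictated by either the top few terms or (for $|x|<1/2$, say) the bottom few terms; (ii) $|x|$ bounded away from $1$ on the small side; (iii) the edge window $\big||x|-1\big|\le n^{-\varepsilon_n}$. In regimes (i) and (ii) I would demand that a bounded number of the \emph{random} coefficients take values in a fixed favorable range (a positive-probability constraint, contributing only a constant to the probability) so as to force the sign of $f_{n,S}$; since $\sup\gamma_n<1$ and $v_n$ is large there, it suffices to make $g_{n,S}$ dominate $h_{n,S}$ with the right sign, which can be arranged because $\E(g_{n,S}(x)^2)$ is comparable to $\E(f_{n,S}(x)^2)$ once $\big||x|-1\big|\gtrsim n^{-\varepsilon_n}$ — here the hypothesis $\sup\gamma_n<1$ (strictly!) is exactly what prevents the target from touching the typical size of $\hat f_{n,S}$. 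In the edge window (iii) the target $\gamma_n$ is $o(1)$, so it is enough to keep $g_{n,S}/v_n$ bounded below by a small positive constant throughout, and this is precisely the positive-probability persistence event for the DPSZ limiting process, whose probability over the $\asymp n^{\varepsilon_n}$-length rescaled window is $n^{-3/4+o(1)}$ by their Theorem~1.1(a)-type estimate applied at the appropriate scale. Multiplying the (constant) probabilities from (i)-(ii) by the $n^{-3/4+o(1)}$ probability from (iii), and using a correlation inequality to handle the mild dependence between the regimes, yields the lower bound.

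\textbf{Main obstacle.} The delicate point is the transition region — making the three regimes overlap consistently so that the constructed event is nonempty and the constraints imposed in neighbouring regimes are compatible, while keeping the total probability at $n^{-3/4+o(1)}$ rather than something smaller. In particular, one must choose the cut-off $n^{-\varepsilon_n}$ (and verify it can be taken uniformly, given only that \emph{some} such sequence $\varepsilon_n\to0$ exists from the hypothesis) so that simultaneously (a) $v_n$ dominates the fixed terms inside the window, (b) outside the window a \emph{bounded} number of sign-forcing constraints on the random coefficients genuinely suffices, and (c) the DPSZ edge-process approximation is valid on the whole window with the claimed exponent. Matching the DPSZ constant $3/4$ exactly — rather than merely getting $n^{-O(1)}$ — requires importing their sharp two-sided estimates essentially verbatim and checking that the $O(1)$ coefficient perturbation is absorbed into the $o(1)$; that bookkeeping, though not conceptually hard, is the technical heart of the argument.
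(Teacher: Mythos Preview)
Your overall architecture---separate the fixed from the random part, absorb the deterministic piece into a modified threshold, reduce to DPSZ's estimates for the fully-random polynomial near $|x|=1$, and use Slepian-type inequalities to patch regimes together---is correct and matches the paper's strategy. The lower-bound plan is essentially the paper's: they too move the top $k$ (or $k+1$) terms to the right-hand side, restrict the free random coefficients among them to a favorable interval $[1/2,1]$, and then invoke Slepian's lemma to split $\R$ into four pieces on which the estimates of~\cite[\S3]{MR1915821} apply directly. One point you gloss over: Slepian requires the covariance of the remaining fully-random polynomial to be everywhere nonnegative, which forces its degree to be even; this is why the paper treats even and odd $k$ separately, peeling off one extra random term when $k$ is odd.

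The genuine gap is in your upper bound. The claim ``the fixed terms can only decrease the persistence probability'' is false as stated (a large positive leading coefficient \emph{helps} positivity for $|x|\gg 1$), and more importantly, once you have moved the deterministic part into the threshold you are left with a persistence event for $\hat g_{n,S}$---a normalized Gaussian polynomial with the $|S|$ monomials $x^{n-i}$, $i\in S$, \emph{deleted}---not for $\hat f_n^*$. DPSZ prove their upper bound by exhibiting an auxiliary process whose covariance $(1-\alpha_n)g(x,y)^{-1}\chi_U(x,y)+\alpha_n$ dominates that of $\hat f_n^*$ on $V\times V$ and then applying Slepian; to reuse their machinery you must verify that the covariance $\wt c_n(x,y)$ of the \emph{deleted} polynomial is still dominated by the same expression. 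This is exactly the content of the paper's Lemma~\ref{lem-upbound}, and it is not a triviality: the case $x,y\in\mc{I}_2$ requires an explicit algebraic manipulation showing a certain difference of squares is nonpositive. Your plan dismisses this as a ``lower-order'' covariance perturbation, but Slepian-based persistence bounds do not transfer automatically under deletion of $O(1)$ terms, and establishing the comparison is the technical heart of the upper bound. (An alternative route---adding back the deleted monomials with fresh independent Gaussians and bounding their normalized size on $V$ by $O(n^{-\delta/2})$---could also work, but it too needs to be carried out, not asserted.)
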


    We also prove the following generalization of Theorem~\ref{thm-locmainmon} concerning the probability $P_{n,j,S}$ that $f_{n,S}(x)$ has exactly $j$ zeros; once again, the niceness condition is required for the lower bound:

\begin{theorem} \label{thm-locmaingen}
    Let $j\in \mathbb{N}$ be such that $j \equiv n-1 \pmod 2$. Then we have the upper bound $P_{n,j,S} \leq n^{-3/4 + o(1)}$. If the chosen data are nice, then we have equality $P_{n,j,S} = n^{-3/4 + o(1)}$. Furthermore, the polynomial $f_n(x)$ has at most $o(\log n/\log\log n)$ real zeros with probability at most $n^{-3/4 + o(1)}$, with equality if the \mbox{coefficient data are nice.}
\end{theorem}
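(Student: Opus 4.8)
The plan is to deduce Theorem~\ref{thm-locmaingen} from Theorem~\ref{thm-main} (and its Gaussian strengthening Theorem~\ref{thm-maingauss}) by the same "separate the leading/fixed part from the fully random part" strategy that underlies the whole paper, combined with the DPSZ persistence estimates for fully random polynomials. The key observation is that the event "$f_{n,S}(x)$ has exactly $j$ simple real zeros" is closely related to the event "$\hat f_{n,S}$ stays strictly above (or below) a small function $\gamma_n$", because having few zeros forces the polynomial to be one-signed away from a short interval. So the proof has two halves: an upper bound $P_{n,j,S}\le n^{-3/4+o(1)}$ valid unconditionally, and a matching lower bound $P_{n,j,S}\ge n^{-3/4+o(1)}$ under niceness.

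\medskip

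For the \textbf{upper bound}, I would argue as follows. If $f_{n,S}$ has at most $j=o(\log n/\log\log n)$ real zeros, then in particular it has a fixed sign (say positive, up to replacing $f$ by $-f$, which costs only a factor of $2$ and parity bookkeeping) on the union of the two unbounded rays $\{|x|>R\}$ for a suitable $R$; more to the point, one can partition $\R$ into $O(j)+O(1)$ intervals on each of which $f_{n,S}$ is one-signed. Following DPSZ's Theorem~1.2 argument, one covers the relevant range of $x$ by $\asymp \log n$ dyadic-type blocks; on all but $j$ of them the polynomial is positive throughout, and on each such block the probability of staying positive is bounded using the Gaussian comparison (strong approximation) plus the block-independence/Markov-type decomposition from~\cite{MR1915821}. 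The fixed coefficients $c_i$ only shift the mean of $\hat f_{n,S}$ by a quantity that is $O(1)$ in the sup-norm after normalization and is $o(1)$ on the critical middle range $||x|-1|\le n^{-\varepsilon_n}$ (this is exactly where the hypotheses of Theorem~\ref{thm-maingauss} were tailored), so the block estimates go through verbatim with $\gamma_n\equiv 0$ or any admissible $\gamma_n$. Summing $-\log$ of the per-block probabilities reproduces the DPSZ bound $\tfrac34\log n - o(\log n)$, and the combinatorial factor $\binom{O(\log n)}{j}=n^{o(1)}$ for choosing which blocks carry the sign changes is absorbed into the $o(1)$ since $j=o(\log n/\log\log n)$. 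This gives $P_{n,j,S}\le n^{-3/4+o(1)}$ and, summing over $j$ in that range, the same bound for "at most $o(\log n/\log\log n)$ real zeros."

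\medskip

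For the \textbf{lower bound} under niceness, I would exhibit an event of probability $n^{-3/4+o(1)}$ on which $f_{n,S}$ has exactly $j$ simple real zeros. The natural construction: force $\hat f_{n,S}(x)>\gamma_n(x)$ for all $|x|$ outside some fixed compact window $[-A,A]$ using Theorem~\ref{thm-main} / Theorem~\ref{thm-maingauss} applied with an appropriate choice of $\gamma_n$ (the niceness hypothesis is precisely what makes that probability $n^{-3/4+o(1)}$ rather than smaller or zero, by controlling the "middle range" $||x|-1|$ small), and independently arrange, using only finitely many of the low-degree random coefficients $a_0,\dots,a_{O(1)}$, that $f_{n,S}$ has exactly $j$ simple sign changes inside $[-A,A]$ — this last event has probability bounded below by a positive constant depending only on $j$ and the coefficient data, by a standard interpolation/open-condition argument, and it is independent of the high-degree coefficients governing the behavior outside the window. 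Combining a probability-$n^{-3/4+o(1)}$ event with an independent positive-probability event yields $P_{n,j,S}\ge n^{-3/4+o(1)}$. The statement about "at most $o(\log n/\log\log n)$ zeros" follows a fortiori since it contains the $j=0$ (or $j=1$) event.

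\medskip

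\textbf{The main obstacle} I anticipate is making the "outside a compact window" reduction genuinely rigorous: one must show that conditioning on (or intersecting with) the positive-probability low-degree event does not destroy the $n^{-3/4+o(1)}$ persistence estimate for the high-degree part, i.e.\ that the two scales really do decouple up to $n^{o(1)}$ factors. This requires that the normalization $\sqrt{\E(f_{n,S}(x)^2)}$ and the covariance structure of $\hat f_{n,S}$ on $|x|\ge A$ are essentially unaffected by fixing $O(1)$ low-degree coefficients — true because those coefficients contribute a bounded amount to the variance while the variance itself blows up polynomially on the relevant range — together with a careful check that the $\gamma_n$ used to encode "stay positive outside $[-A,A]$, and do whatever inside" still satisfies the smallness hypothesis of Theorem~\ref{thm-maingauss} on the middle range, which sits outside $[-A,A]$ once $A$ is fixed and $n$ is large. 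Everything else is an assembly of tools already established: the DPSZ block decomposition, strong approximation to the Gaussian case, and Theorems~\ref{thm-main}--\ref{thm-maingauss} of the present paper.
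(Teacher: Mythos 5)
Your upper bound sketch is essentially correct and matches the paper's approach (and DPSZ's Theorem~1.2 argument): one divides the critical region $V$ near $|x|=1$ into $\asymp \log n$ unit blocks in logarithmic coordinates, observes that if $f_{n,S}$ has $\leq j$ zeros then there is a choice of $j$ blocks to ignore such that $f_{n,S}$ is one-signed on the remaining $\leq j+4$ maximal pieces, pays the combinatorial factor $n^{o(1)}$, and strips off the $\approx k$ fixed terms (bounding their contribution by $O(n^{-\delta/2}\sigma_{n-r}(x))$ on $V$) to reduce to DPSZ's bound for a fully random polynomial. That part of your proposal is sound.

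Your lower bound, however, has a genuine gap, and it is not the one you flag as the ``main obstacle.'' You propose to create $j$ simple zeros inside a fixed compact window $[-A,A]$ using only $O(1)$ low-degree coefficients, and to claim this event has probability bounded below by a positive constant and is independent of the high-degree behavior. This does not work for several intertwined reasons. First, fixing $a_0,\dots,a_{O(1)}$ does not determine the sign pattern of $f_{n,S}$ on $[-A,A]$: the remaining random coefficients $a_{O(1)+1},\dots$ contribute a quantity of order $\sum_{i>O(1)} a_i x^i$, which has nonnegligible variance on $[-A,A]$, so the ``$j$ sign changes inside $[-A,A]$'' event depends on all the coefficients and cannot be decoupled from the persistence event. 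Second, and more structurally: in the Kac regime the polynomial only oscillates near $|x|=1$, and DPSZ's construction (which the paper adapts as Lemmas~\ref{lem-hconds} and~\ref{lem-hconds2}) places the $j$ forced zeros at points of the form $(1-\delta)^{1/r_i}$ that approach $1$, not in a fixed compact window; achieving this requires constraining $m(n)\sim C_j\rho_n\log n = o(\log n)$ low-degree coefficients to lie in $\epsilon$-windows around the coefficients of a carefully designed oscillating polynomial $B(x)$, and the probability of that event is $n^{o(1)}$, not a positive constant. Third, the case $j\equiv n-1\equiv 1 \pmod 2$ requires an argument (the paper's Lemma~\ref{lem-hconds2}) that is \emph{new} relative to DPSZ, because DPSZ's odd-$j$ construction constrains both low- and high-degree coefficients to tiny windows, which is incompatible with having the top $k$ coefficients fixed; your proposal does not account for this at all.

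Finally, a structural point worth noting: the paper's proof of Theorem~\ref{thm-locmaingen} is deliberately \emph{logically independent} of Theorems~\ref{thm-main} and~\ref{thm-maingauss} (this is stated explicitly in the paper, and it yields a second proof of Theorem~\ref{thm-mainmon}), whereas your plan would make Theorem~\ref{thm-locmaingen} depend on them. Even setting aside the decoupling issue, plugging a $\gamma_n$ that equals $-\infty$ on $[-A,A]$ into Theorem~\ref{thm-main} is not permitted (its hypothesis requires $n^\delta|\gamma_n|\to 0$ uniformly), so you would have to redo the KMT transfer with a modified $\gamma_n$, which is essentially the content of the paper's Section~4 rather than a shortcut around it.
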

It is important to note that our proof of Theorem~\ref{thm-locmaingen} does \emph{not} rely on Theorem~\ref{thm-main} or its proof. Observe that if we take $n$ to be odd and $j$ to be even in Theorem~\ref{thm-locmaingen}, we obtain Theorem~\ref{thm-main} in the special case where $\gamma_n \equiv 0$. Further taking $S = \{1\}$ and $c_1 = 1$ gives a second proof of Theorem~\ref{thm-mainmon}. On the other hand, the much stronger Theorem~\ref{thm-main} cannot be similarly deduced from Theorem~\ref{thm-locmaingen}.

 Incidentally, our methods allow us to obtain some results in cases where the number of fixed coefficients is allowed to grow slowly with $n$. A particular case of interest is that of monic polynomials with many consecutive coefficients fixed to be zero. To set this up, let $k(n)$ be a nondecreasing function with values in $\mathbb{N}$, let $S_n = \{1, \dots, k(n)\}$, let $c_1 = 1$ and $c_i = 0$ for all $i \geq 2$, and consider the random polynomial $f_{n,S_n}(x)$. Explicitly, we have
      $$f_{n,S_n}(x) = x^{n-1} + \sum_{i = k(n)+1}^n a_{n-i}x^{n-i}.$$
      Then we have the following analogue of Theorem~\ref{thm-main}, which holds in the regime where $\log k(n)$ grows slower than $\sqrt{\log n}$ (with a somewhat weaker condition in the Gaussian case):
    \begin{theorem} \label{thm-fixedcoeffszero}
    Let notation be as above, and suppose that $k(n) = n^{o(1)}$ in the case of Gaussian coefficients, and that $k(n) = n^{o(1/\sqrt{\log n})}$ in the case of general coefficients. Then we have
    $P_{n, S_n,\gamma_n} = n^{-3/4 + o(1)}$.
    \end{theorem}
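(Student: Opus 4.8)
The plan is to deduce Theorem~\ref{thm-fixedcoeffszero} from Theorem~\ref{thm-maingauss} (in the Gaussian case) and Theorem~\ref{thm-main} (in the general case) by a direct reduction, the point being that zeroing out the bottom $k(n)-1$ subleading coefficients of a monic polynomial of degree $n-1$ is, up to a harmless rescaling of the variable, the same as looking at a monic polynomial of degree $n-k(n)$. Concretely, write $f_{n,S_n}(x) = x^{n-1} + \sum_{i=k(n)+1}^n a_{n-i}x^{n-i}$ and factor out the lowest power of $x$ that appears; this does not change the number of nonzero real zeros, so it suffices to control the sign of $g(x) \defeq x^{n-k(n)} + \sum_{i=k(n)+1}^{n} a_{n-i} x^{k(n)-i}$, which is a \emph{monic random polynomial of degree $n-k(n)$} with all subleading coefficients i.i.d.\ (the indices $n-i$ for $i \in \{k(n)+1,\dots,n\}$ are exactly $0,1,\dots,n-k(n)-1$). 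Since $n-k(n)$ has the same parity as $n-1$ when $k(n)$ is odd and the opposite parity otherwise, one has to be slightly careful about parities, but in any case $g$ is a member of the family covered by Theorems~\ref{thm-mainmon}/\ref{thm-locmainmon} at degree $m(n) \defeq n - k(n)$, and the desired probability is $P_{m(n)}$ (or the corresponding ``exactly $j$ zeros'' probability).

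First I would make the reduction to a fixed-degree monic polynomial precise and record that $P_{n,S_n,0}$ equals (or, when $\gamma_n \not\equiv 0$, is comparable to) $P_{m(n)}$ up to the contribution of the $\gamma_n$ perturbation; here the hypothesis $n^\delta|\gamma_n(x)|\to 0$ uniformly ensures that the perturbation is absorbed exactly as in the proof of Theorem~\ref{thm-main}. Second, I would observe that $P_{m(n)} = m(n)^{-3/4+o(1)}$ by Theorem~\ref{thm-mainmon}. The crux is then to check that $m(n)^{-3/4+o(1)} = n^{-3/4+o(1)}$, i.e.\ that $\log_{n} P_{m(n)} \to -3/4$. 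Writing $\log_n P_{m(n)} = \frac{\log m(n)}{\log n}\cdot \log_{m(n)} P_{m(n)}$, and using $m(n) = n - n^{o(1)}$ so that $\log m(n)/\log n \to 1$, this follows immediately provided $m(n)\to\infty$, which holds as long as $k(n) = o(n)$ — far weaker than the stated hypotheses. So in the \emph{Gaussian} case the theorem holds under the mild condition $k(n) = n^{o(1)}$ simply because $g$ is itself Gaussian monic and Theorem~\ref{thm-maingauss} (via Theorem~\ref{thm-mainmon}) applies verbatim; indeed one sees the argument works whenever $k(n)=o(n)$, and the $n^{o(1)}$ restriction is only what is needed to keep $S_n$ genuinely ``growing slowly'' in the bookkeeping for $\gamma_n$.

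The one genuinely delicate point — and the main obstacle — is the \emph{general (non-Gaussian)} case, where one cannot simply invoke Theorem~\ref{thm-mainmon} as a black box at degree $m(n)$ if the number of fixed coefficients is itself growing, because the strong-approximation step that reduces the general case to the Gaussian case (the Komlós–Major–Tusnády / Sakhanenko-type coupling used to prove Theorem~\ref{thm-main} from Theorem~\ref{thm-maingauss}) incurs an error that depends on how many coefficients are being coupled and on the range of degrees over which the polynomial must be controlled. Tracking that error shows it is acceptable only when $\log k(n) = o(\sqrt{\log n})$, i.e.\ $k(n) = n^{o(1/\sqrt{\log n})}$, which is precisely the stated hypothesis; the extra $\sqrt{\log n}$ loss relative to the Gaussian case is exactly the price of the coupling. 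So the plan for this case is: (i) set up the KMT-type coupling between the $a_i$ and standard normals over the relevant index range, with explicit dependence on $k(n)$; (ii) show that on the ``middle range'' $\big||x|-1\big|\le n^{-\varepsilon_n}$ the coupled Gaussian polynomial and the original polynomial have the same sign behavior up to an event of probability $n^{-\omega(1)}$ once $\log k(n)=o(\sqrt{\log n})$; (iii) outside the middle range, control the polynomial by its leading term and a crude tail bound, exactly as in the proof of Theorem~\ref{thm-main}; and (iv) combine with the Gaussian case from the previous paragraph. The verification in step (ii) — propagating the coupling error through the supremum over an interval of length $n^{-\varepsilon_n}$ and through the $n-k(n)$ relevant monomials — is where essentially all the work lies, but it is a quantitative refinement of arguments already present in the proof of Theorem~\ref{thm-main} rather than a new idea.
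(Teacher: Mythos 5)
Your central reduction does not hold. You claim that $f_{n,S_n}(x) = x^{n-1} + \sum_{i=k(n)+1}^n a_{n-i}x^{n-i}$ can be turned, ``up to a harmless rescaling of the variable,'' into a monic random polynomial of degree $n-k(n)$ by factoring out a power of $x$. This is not possible. In the setting of Theorem~\ref{thm-fixedcoeffszero}, the coefficients fixed to zero are those of $x^{n-2}, \dots, x^{n-k(n)}$, i.e.\ the \emph{top} subleading coefficients, immediately below the monic leading term. The random terms occupy degrees $n-k(n)-1$ down to $0$, so the constant term $a_0$ is present and the lowest power of $x$ occurring in $f_{n,S_n}$ is $x^0$; there is nothing to factor out. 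Between the leading monomial $x^{n-1}$ and the top random term $a_{n-k(n)-1}x^{n-k(n)-1}$ there is a gap of $k(n)-1$ zero coefficients, and no monomial rescaling $x \mapsto x^a f(x)$ can close that gap. Indeed your own displayed $g(x) = x^{n-k(n)} + \sum_{i=k(n)+1}^{n} a_{n-i} x^{k(n)-i}$ is a Laurent polynomial (for $i=n$ the exponent is $k(n)-n < 0$), not a degree-$(n-k(n))$ monic polynomial, so Theorem~\ref{thm-mainmon} cannot be invoked on it. The situation you seem to have in mind --- where the \emph{bottom} $k(n)-1$ coefficients are zeroed, which would permit dividing out $x^{k(n)}$ --- is a different and easier problem.

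As a consequence, the downstream claims are also off. It is not true that the statement holds whenever $k(n)=o(n)$ in the Gaussian case with the growth restriction serving only as bookkeeping for $\gamma_n$. In the paper's argument, the lower bound goes by peeling off the top $k(n)$ terms and comparing $\hat f_{n,S_n}$ to the fully random polynomial $\hat f^{b,*}_{n-k(n)}$ against a modified threshold $\gamma_n^*$; controlling that threshold near $|x|=1$ produces an error of size $\asymp k(n)\sqrt{\xi_n}$ (see~\eqref{eq-boundsecterm}--\eqref{eq-bipartite}), and one also needs $\log k(n) \lll \tau_n \lll \log n$, which forces $k(n)=n^{o(1)}$ already in the Gaussian case. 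The paper proves Theorem~\ref{thm-fixedcoeffszero} not by a separate reduction but by reproving Theorems~\ref{thm-maingauss} and~\ref{thm-main} with explicit attention to the $k$-dependence (the relevant remarks are inline in \S\ref{sec-gaussproofs} and in the proof of Lemma~\ref{lower}); your step (ii), propagating the KMT coupling error to obtain the $n^{o(1/\sqrt{\log n})}$ threshold, is the right intuition for where the general-coefficients restriction comes from, but it is asserted rather than carried out, and it rests on a reduction that is itself incorrect.
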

    The following analogue of Theorem~\ref{thm-locmaingen} holds in the tighter regime where $k(n)$ grows slower than $\log n$.
    \begin{theorem} \label{thm-fixedcoeffszero2}
    Let notation be as above, and suppose that $k(n) = o(\log n)$. Let $j\in \mathbb{N}$ be such that $j \equiv n-1 \pmod 2$.  Then we have
    $P_{n, j,S_n} = n^{-3/4 + o(1)}$.
    \end{theorem}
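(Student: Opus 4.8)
The plan is to run the proof of Theorem~\ref{thm-locmaingen}, specialized to the coefficient data $S = S_n = \{1,\dots,k(n)\}$ with $c_1 = 1$ and $c_i = 0$ for $i \ge 2$, while keeping track of how each implied constant and error exponent depends on $k = k(n)$. Put $m \defeq n - k(n)$, so that $f_{n,S_n}(x) = x^{n-1} + g_n(x)$ where $g_n(x) \defeq \sum_{\ell = 0}^{m-1} a_\ell x^\ell$ is a fully random polynomial of degree $m-1$ with the same law as $f_m^*$; since $k(n) = o(\log n)$ we have $\log m = \log n + o(1)$, so any bound of the shape $m^{-3/4 + o(1)}$ for $g_n$ transfers to $n^{-3/4+o(1)}$. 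As in the proof of Theorem~\ref{thm-locmaingen}, I would record the bijection (with multiplicity) between the zeros of $f_{n,S_n}$ with $|x| > 1$ and the zeros in $(-1,1)\setminus\{0\}$ of the reciprocal polynomial $h_n(x) \defeq x^{n-1} f_{n,S_n}(1/x) = 1 + x^{k(n)}\wt g_n(x)$, where $\wt g_n$ is the coefficient-reversal of $g_n$ (so $\wt g_n$ has the same law as $f_m^*$, and its low-order coefficients are the high-order coefficients of $g_n$). The real zeros of $f_{n,S_n}$ then organize into four clusters accumulating at $\pm 1$: the two \emph{inner} clusters (on $(-1,1)$) are governed by $g_n$, since on $\{|x| \le 1 - \rho_n\}$, with $\rho_n$ a fixed power of $\log n$ over $n$, the term $x^{n-1}$ is superpolynomially small; the two \emph{outer} clusters (on $\{|x|>1\}$) are governed, through $h_n$, by $\wt g_n$ near $\pm 1$.

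For the upper bound, I would use that ``$f_{n,S_n}$ has exactly $j$ real zeros'' lies in the intersection of the four events ``at most $j$ zeros in $I$'' as $I$ ranges over the four subintervals carrying the clusters, with the $\rho_n$-gap excised near $\pm 1$. On the two inner intervals, replace $f_{n,S_n}$ by $g_n$ up to superpolynomial error and invoke the DPSZ persistence estimates: the relevant persistence exponent being $\frac14\cdot\frac34 = \frac3{16}$ (one quarter of the DPSZ constant), each inner event has probability $m^{-3/16+o(1)}$. On the two outer intervals, replace $f_{n,S_n}$ by $h_n$ and observe that $h_n(x) = 1 + x^{k(n)}\wt g_n(x)$ has no zeros for $|x|$ bounded below $1$ (the summand $1$ dominates there) and satisfies $h_n \approx \wt g_n$ only within distance $\asymp (\log k(n))/k(n)$ of $\pm 1$; hence each outer event is a persistence event for $\wt g_n$ over a window of effective length $\log(m/k(n))$, of probability $(m/k(n))^{-3/16+o(1)}$. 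Since the behavior of $g_n$ near $\pm 1$ depends on the low-order coefficients and that of $h_n$ near $\pm 1$ on the high-order coefficients, while behavior near $+1$ decorrelates from behavior near $-1$, the DPSZ decorrelation estimate makes the four events jointly nearly independent, giving
\[ P_{n,j,S_n} \le m^{-3/16+o(1)}\cdot m^{-3/16+o(1)}\cdot (m/k(n))^{-3/16+o(1)}\cdot (m/k(n))^{-3/16+o(1)} = n^{-3/4+o(1)}, \]
the last step using $k(n) = o(\log n)$ (so that $k(n)^{3/8} = n^{o(1)}$).

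For the lower bound, I would exhibit an event $\mc E$ with $\P(\mc E) \ge n^{-3/4 - o(1)}$ on which $f_{n,S_n}$ has exactly $j$ simple real zeros. On $\mc E$, force $g_n$ to stay above a fixed positive level on the inner region $[-1+1/m,\,1-1/m]$, and force $h_n$ to stay positive on $(-1,1)\setminus\{0\}$ --- equivalently, force $f_{n,S_n}$ to be positive on $\{|x|>1\}$ (for $n$ odd) or positive on $(1,\infty)$ and negative on $(-\infty,-1)$ (for $n$ even), which by the shape of $h_n$ again reduces to forcing $\wt g_n$ above a fixed level on its clusters near $\pm 1$ out to distance $\asymp (\log k(n))/k(n)$. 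Separately, choose the low-order coefficients $a_0,\dots,a_j$ so that $f_{n,S_n}$ has exactly $j$ simple zeros in a small neighborhood of $0$ and none just outside it; the parity hypothesis $j \equiv n-1 \pmod 2$ makes this configuration consistent with the (deterministic) sign of $f_{n,S_n}$ at $\pm\infty$. The DPSZ lower-bound machinery for $f_m^* = g_n$ together with the same decorrelation estimate then gives $\P(\mc E) \ge m^{-3/16-o(1)}\cdot m^{-3/16 -o(1)}\cdot (m/k(n))^{-3/16-o(1)}\cdot (m/k(n))^{-3/16-o(1)} \ge n^{-3/4-o(1)}$.

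The crux is the middle range $1 \le |x| \lesssim 1 + (\log k(n))/k(n)$, where neither $x^{n-1}$ nor $g_n$ dominates outright: one must check that, for the upper bound, the zeros of $f_{n,S_n}$ there are faithfully captured through $h_n$; that, for both bounds, the interference of the deterministic leading term with the outer clusters shortens the relevant persistence window by only $\log k(n)$, contributing at most a multiplicative $k(n)^{O(1)} = n^{o(1)}$; and that, for the lower bound, the event forcing $x^{n-1}$ to control $f_{n,S_n}$ beyond this range --- a tail event that becomes binding only at roughly $\sqrt{\log k(n)}$ standard deviations of $g_n$, hence costs only $(\log k(n))^{O(1)}$ --- has probability bounded away from $0$. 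Keeping all of these corrections $n^{o(1)}$ is precisely where the hypothesis $k(n) = o(\log n)$ enters the argument. In the non-Gaussian case, as in~\cite{MR1915821}, the passage to Gaussian coefficients is effected by a strong-approximation coupling, whose error is likewise absorbed into the $n^{o(1)}$ terms.
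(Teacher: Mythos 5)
Your proposal takes a genuinely different route from the paper. The paper proves this theorem by re-running its proof of Theorem~\ref{thm-locmaingen} with $S = S_n$ and $c_i \in \{0,1\}$: the lower bound comes from the \textbf{H}-conditions (Lemma~\ref{lem-hconds}) when $n-1,j$ are even and the \textbf{K}-conditions (Lemma~\ref{lem-hconds2}) when they are odd, where one pins down $m\sim\rho_n\log n$ low-order coefficients to track a carefully built polynomial $B(x)$, bounds $m$ high-order coefficients, and applies DPSZ's Theorem~1.3 to the middle block; the upper bound comes from partitioning $V$ into $O(\log n)$ log-scale intervals, ignoring $j$ of them, and splitting off the top $\approx k(n)$ terms as in~\eqref{eq-splitoffs}. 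The paper's remarks at the ends of \S\ref{sec-lowkodd1}/\S\ref{sec-lowkeven1} and \S4.2 observe that both halves tolerate $k(n)=o(\log n)$ because the behavior on $[-1,1]$ is dominated by the bottom $m$ coefficients and the split-off error remains $n^{o(1)}$. You instead propose a four-cluster persistence decomposition with per-cluster exponent $3/16$ and a DPSZ-style decorrelation, which is conceptually attractive but is not what the paper does.

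Two concrete gaps in your route. First, the per-cluster persistence estimate and the four-way decorrelation are \emph{implicit} in DPSZ's proof of their Theorem~1.1, not packaged as citable black boxes; to use them here you would essentially have to re-derive the Slepian lower bound and the comparison-process upper bound on each of the four intervals $\mathcal{I}_i$, which is roughly the work the paper's \S\ref{sec-gaussproofs} and \S\ref{sec-pfgencoffs} already carry out. Second, your analysis of the outer clusters misidentifies the scale at which the constant $1$ in $h_n(x)=1+x^{k(n)}\wt g_n(x)$ matters. The DPSZ cluster sits at distance between $n^{-(1-\delta)}$ and $n^{-\delta}$ from $\pm 1$; on that scale $k(n)\cdot n^{-\delta}\to 0$, so $x^{\pm k(n)}\to 1$ and the constant $1$ is $O(1/\sigma_m(x))$ in normalized units, i.e.\ negligible. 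The crossover distance $\asymp(\log k(n))/k(n)$ you identify lies strictly \emph{outside} the DPSZ cluster whenever $k(n)=o(\log n)$, so it only affects the ``away-from-$\pm1$'' region (which already contributes $n^{o(1)}$) and does not shorten the persistence window that sets the exponent. Thus each outer cluster also contributes $m^{-3/16+o(1)}$, not $(m/k(n))^{-3/16+o(1)}$; your final exponent comes out right only because the spurious $k(n)^{3/8}$ factor happens to be $n^{o(1)}$. Relatedly, this shows the hypothesis $k(n)=o(\log n)$ is not really being ``used'' by your argument in the way you claim; in the paper it is needed so that the block of fixed coefficients stays compatible with the degree-$(m-1)$ polynomial $B(x)$ in the \textbf{H}/\textbf{K} construction and with the DPSZ middle-block estimate after those coefficients are zeroed out. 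Your lower bound is also substantially underspecified --- the delicate construction forcing exactly $j$ simple zeros (the paper's $B(x)$ built from alternating $Q,R$ blocks and the parity bookkeeping of \S\ref{sec-lowkeven1}) is the real content of that half and is not replaced by anything concrete in your sketch.
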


\subsection{Application to counting algebraic integers}

As a consequence of Theorem~\ref{thm-locmaingen}, we can deduce asymptotic counts of algebraic integers having few or no real Galois conjugates. Before stating this application, we must define how we count algebraic \mbox{integers --- i.e.,} we must put a height function on them. Given an algebraic integer $\alpha$, let $\alpha'$ be the unique $\Z$-translate of $\alpha$ with trace lying in $\{0,\dots, n-1\}$, and let $p(x) = x^n + \sum_{i = 1}^n p_{i}x^{n-i} \in \Z[x]$ be the minimal polynomial of $\alpha'$. Then we define the \emph{height} of the equivalence class of $\alpha$ to be $\max_{i \in \{1,  \dots, n\}} |p_i|^{1/i}$.

We say that a nonzero algebraic integer $\alpha$ is \emph{$j$-realizable} if it has exactly $j$ real Galois conjugates. For instance, an algebraic integer $\alpha$ of degree $n$ is $0$-realizable if it is totally complex and $n$-realizable if it is totally real. Then Theorem~\ref{thm-locmaingen} has the following immediate corollary giving an asymptotic formula for the density of $j$-realizable algebraic integers:

\begin{corollary} \label{cor-algint}
    Let $j \in \mathbb{N}$ be fixed. When algebraic integers $\alpha$ of degree $n \equiv j \pmod 2$ are ordered by height, the density of $\alpha$ that are $j$-realizable is $n^{-3/4+o(1)}$.
\end{corollary}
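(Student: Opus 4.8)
The plan is to translate the arithmetic statement into a count of integer lattice points in a box, and then---after a rescaling that exploits the invariance of the number of real roots under $p(x) \mapsto T^{-n}p(Tx)$---to identify that count with the probability supplied by Theorem~\ref{thm-locmaingen}. Throughout, write $n$ for the degree, so that $n \equiv j \pmod 2$. From the definition of the height, the equivalence classes under $\Z$-translation of degree-$n$ algebraic integers of height at most $T$ are in bijection with the monic irreducible polynomials $p(x) = x^n + \sum_{i=1}^n p_i x^{n-i} \in \Z[x]$ whose subleading coefficient satisfies $-p_1 \in \{0, \dots, n-1\}$ and whose remaining coefficients satisfy $|p_i| \le T^i$ for $2 \le i \le n$ (the constraint $|p_1| \le T$ being automatic once $T > n$); under this bijection, the number of real Galois conjugates of $\alpha$ equals the number of real roots of $p$, all of which are simple. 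It therefore suffices to prove that, for each fixed $n$, the proportion of such $p$ having exactly $j$ real roots converges as $T \to \infty$ to a limit $\rho_n$ satisfying $\rho_n = n^{-3/4 + o(1)}$.

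The first step is to remove the irreducibility constraint. By a quantitative form of van der Waerden's theorem on the scarcity of reducible integer polynomials, the number of \emph{reducible} monic $p$ with $|p_i| \le T^i$ for $2 \le i \le n$ is of strictly smaller order, as $T \to \infty$, than the total number of monic $p$ in this box; hence the limiting proportion is unaffected if we drop the word ``irreducible.'' We may likewise replace ``exactly $j$ real roots'' by ``exactly $j$ simple real roots,'' since the $p$ with a repeated root lie on the zero locus of the discriminant and so form a negligible set.

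Next, the substitution $p(x) \mapsto T^{-n}p(Tx)$ carries the lattice $\{(p_2, \dots, p_n) \in \Z^{n-1} : |p_i| \le T^i\}$ onto a grid inside the cube $[-1,1]^{n-1}$, with mesh $T^{-i}$ in the $i$-th coordinate (tending to $0$ as $T \to \infty$), via $b_i := p_i/T^i$, while the coefficient of $x^{n-1}$ becomes $p_1/T$, which tends to $0$ uniformly because $|p_1| < n$. A routine equidistribution argument---using that, for each fixed value of the $x^{n-1}$-coefficient, the subset of $[-1,1]^{n-1}$ on which the corresponding monic polynomial has exactly $j$ simple real roots is Jordan-measurable, its boundary being contained in a discriminant locus---then shows that, for each of the $n$ admissible values of $p_1$, the proportion converges to the probability that $x^n + \sum_{i=2}^n b_i x^{n-i}$ has exactly $j$ simple real roots, where the $b_i$ are i.i.d.\ uniform on $[-1,1]$ (the vanishing of the limiting $x^{n-1}$-coefficient being harmless here). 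Averaging over the $n$ values of $p_1$ gives the same limit for the overall proportion, so $\rho_n$ equals this probability.

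It remains to identify $\rho_n$ as an instance of $P_{n,j,S}$. Multiplying the polynomial by the constant $\sqrt 3$---which does not change its roots---replaces each $b_i$ by $a_i := \sqrt 3\, b_i$, which are i.i.d.\ of mean $0$, variance $1$, and finite moments of all orders, and produces exactly the polynomial $f_{n+1,S}(x)$ in the paper's notation with $k = 2$, $S = \{1, 2\}$, $c_1 = \sqrt 3 > 0$, and $c_2 = 0$. These coefficient data are \emph{nice}: fixing any even integer $s > 2$ and setting $\varepsilon := \sqrt 3/(2(s-2))$, on the positive-probability event that $|a_i| \le \varepsilon$ for all $i$ one has, for every $x$ with $|x| > 1$,
$$|f_{s,S}(x)| \;\ge\; \sqrt 3\,|x|^{s-1} - \varepsilon \sum_{\ell = 0}^{s-3} |x|^{\ell} \;\ge\; \sqrt 3\,|x|^{s-1} - \varepsilon (s-2)\,|x|^{s-3} \;=\; \sqrt 3\,|x|^{s-3}\Big(|x|^2 - \tfrac12\Big) \;>\; 0,$$
so $f_{s,S}$ has no zeros with $|x| > 1$. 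Applying Theorem~\ref{thm-locmaingen} with $n$ replaced by $n+1$---the parity hypothesis $j \equiv (n+1) - 1 \pmod 2$ holds---then yields $\rho_n = P_{n+1,j,S} = (n+1)^{-3/4 + o(1)} = n^{-3/4 + o(1)}$, which is the claim. I expect the two points that go beyond routine bookkeeping to be the main obstacles: the input that reducible polynomials are negligible inside the \emph{weighted} box $\{|p_i| \le T^i\}$ (rather than the usual cube), and the verification that the rescaled random polynomial lands in a \emph{nice} member of the family of Theorem~\ref{thm-locmaingen}---in particular, that the forced vanishing $c_2 = 0$ of the subleading coefficient does not spoil niceness.
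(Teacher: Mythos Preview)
Your argument is correct and follows the same route as the paper: reduce to the random monic polynomial with i.i.d.\ uniform subleading coefficients and invoke Theorem~\ref{thm-locmaingen}. The only noteworthy difference is that you handle the trace constraint $p_1\in\{0,\dots,n-1\}$ by passing to the limit $c_2=0$ and taking $S=\{1,2\}$ (with an explicit niceness check), whereas the paper simply takes $S=\{1\}$, $c_1=\sqrt 3$; both choices land in nice coefficient data and yield the same $n^{-3/4+o(1)}$, so your extra care with the subleading coefficient and with the passage from lattice points to the uniform model just fills in details the paper leaves implicit.
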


Indeed, note that Corollary~\ref{cor-algint} follows from Theorem~\ref{thm-locmaingen} by taking the distribution of the $a_i$ to be the uniform distribution on $[-\sqrt{3},\sqrt{3}]$, and taking $k = 1$, $S = \{1\}$, and $c_1 = \sqrt{3}$. Note that the factor of $\sqrt{3}$ arises because the $a_i$ are required to have unit variance.

\begin{remark}
The problem of counting algebraic integers satisfying interesting conditions at the archimedean places has been studied before in the literature. Of particular relevance to the present paper is the work of Calegari and Huang~\cite{MR3687945} (cf.~the closely related earlier work of Akiyama and Peth\H{o}~\cite{MR3238322,MR3237072}), who consider algebraic integers ordered by a somewhat different height function, namely, the absolute value of the largest Galois conjugate. With respect to this height, they determine precise asymptotics for densities of various types of algebraic integers, including the totally complex subfamily, for which they obtain a density $\asymp n^{-3/8}$. Proving this result amounts to studying monic real polynomials, all of whose roots have absolute value at most $1$; note that such polynomials are special in that they are positive on $\R$ if and only if they are positive on $[-1,1]$. We note that, by modifying the proof of Theorem~\ref{thm-mainmon}, it is possible to show that a random monic polynomial is positive on $[-1,1]$ with probability $n^{-3/8 + o(1)}$. %As observed in~\cite[Remark~1.5]{MR3687945}, this raises the intriguing question of whether the densities $n^{-3/8}$ and $n^{-b/2 + o(1)}$ agree --- i.e., whether $b$ is in fact equal to $3/4$, a result that would seemingly concur with the aforementioned numerical simulations performed by DPSZ, which indicated that $b \approx 0.76$.
\end{remark}

\begin{remark}
     Our main results have other applications of arithmetic interest. For instance, by combining this theorem with the main results of~\cite{MR4703126}, one obtains an tighter upper bound on the proportion of superelliptic equations having no integral solutions. Specifically, in~\cite{MR4703126}, the proportion of  superelliptic equations having no integral solutions is bounded in terms of various local densities, one of which happens to be the density of \emph{monic} polynomials of bounded height with a specified number of real roots. To control this density, the trivial bound of $1$ is used; however, applying Theorem~\ref{thm-locmainmon} instead of the trivial bound would tighten the main results of~\cite{MR4703126} by a factor of $o(1)$. This is in complete analogy with how the results of~\cite{MR1915821} were applied in~\cite{thesource} and in~\cite{MR3600041} to obtain bounds on the proportion of hyperelliptic curves having no rational points over odd-degree number fields. Here, the proportion of such ``pointless'' curves is bounded in terms of various local densities, one of which happens to be the density of \emph{not-necessarily-monic} polynomials of bounded height with a specified number of real roots (hence the relevance of~\cite{MR1915821}).
\end{remark}

\subsection{Summary of related earlier work}
For at least the past century, mathematicians have been interested in the number of real zeros, say $N_n$ of random degree $n - 1$ polynomials of the form 
$\sum_{i = 1}^n a_{n - i} A_{n - i} x^{n - i}$ where the $A_{n - i}$ are deterministic real numbers depending on $n$ and $i$, and the $a_{n - i}$ are i.i.d.~with zero mean and unit variance, as above. The case of Kac polynomials, which arise from setting each $A_{n - i} = 1$ has been of particular interest. In 1932, Bloch and Polya considered $a_i$ chosen uniformly at random from $\{-1, 0, 1\}$ and showed $\mathbb{E}(N_n) = O(n^{1/2})$ \cite{blochpolya}. 
Beginning in 1938, Littlewood and Offord followed this with a series of papers \cite{LO1, LO2, LO3} studying $N_n$ for three coefficient distributions: standard normal, uniform on $[-1, 1]$, and Bernoulli on $\{-1, 1\}$. In each case, they showed that there exist constants $A, B > 0$ such that the following holds:
\[
A \frac{\log n}{(\log \log n)^2} \le N_n \le B (\log n)^2 
\qquad \text{w.p.~$1 - o(1)$ as $n \to \infty$.}
\]
In 1943, Kac found an exact formula for the density function of $N_n$ for any coefficient distribution with zero mean and finite variance, which he used to obtain the estimate $\mathbb{E}(N_n) = (\frac{2}{\pi} + o(1)) \log n$ in the case of the standard Gaussian distribution \cite{kac1}. Kac stated that along with the central limit theorem, his work would give the same formula for $\mathbb{E}(N_n)$ for many other coefficient distributions. 

Kac's work was the first suggestion of the universality phenomenon in the context of $N_n$. In probability theory, \textit{universality} refers to the case when a limiting random object as $n$ goes to infinity (such as the real number $\lim_{n \to \infty} \frac{\mathbb{E}(N_n)}{\log n}$) is insensitive to the particular distributions of i.i.d.~atoms (such as coefficients $a_i$) from which the random object is obtained. The study of universality for the quantity $N_n$ has attracted significant interest from probabilists and number theorists.  
Indeed, it is not only the expectation $\lim_{n \to \infty} \frac{\mathbb{E}(N_n)}{\log n}$ that exhibits universality. 
In several papers \cite{IM1, IM2, IM3, Maslova} ending in Maslova's work in 1974, Ibragimov and Maslova further illuminated the distribution of $N_n$, culminating in a central limit theorem for $N_n$. 
In 2002, DPSZ demonstrated with their study of $\mathbb{P}(N_n = 0)$ that it was not just the limiting shape of the CDF in Maslova's work but also the extreme left tail of the CDF for $N_n$ that exhibits a universal behavior, independent of the law of $a_i$. When some of the leading coefficients are fixed, our Theorem \ref{thm-main} reveals a clean dichotomy: unless the choices of fixed coefficients and random coefficient distribution force the problem to be degenerate in a precise way, the limiting behavior of $\mathbb{P}(N_n = 0)$ is universal with respect to the random coefficient distribution and the fixed coefficients. 

DPSZ state that their interest in $P_n^*$ arose from work of Poonen and Stoll in arithmetic geometry, which found the probability that a random hyperelliptic curve $y^2 = f(x)$ of genus $g$ over $\Q$ has odd Jacobian \cite{poonenstoll}. Poonen and Stoll reduced the problem to the computation of local probabilities, one for each completion of $\Q$, and the archimedean completion required the computation of the probability that $y^2 = f(x)$ has no real points, or, equivalently, that $f(x) < 0$ holds on all of $\R$. Importantly, this probability arose with the coefficients of $f$ having a uniform distribution. Further, for the application of DPSZ's work in \cite{thesource} and \cite{MR3600041} and for the application of our main results to \cite{MR4703126} and our Corollary 10, the relevant case is again that of uniformly distributed $a_i$. Meanwhile, the work of DPSZ shows that the probability $P_n^*$ is easiest to find when the coefficient distribution is Gaussian. Indeed, for DPSZ, handling the uniform distribution is no easier than handling a general distribution with zero mean, unit variance, and finite moments, and we find the same in our work. Even for the problem of determining asymptotics of $\mathbb{E}(N_n)$, Kac found the Gaussian distribution most tractable and did not solve the uniform case until six years later \cite{kac2}. Thus, when studying the real zeros of $f_{n, S}(x)$, proving universality reduces the arithmetically interesting case of the uniform distribution to the case of the Gaussian distribution, \mbox{most amenable to techniques from probability theory.}

\begin{remark} \label{rmk-b}
    The problem of determining both the physical meaning and the precise value of the constant $b$ introduced by DPSZ has garnered significant attention in the literature. Years after DPSZ, Schehr and Majumdar~\cite{PhysRevLett.99.060603,MR2415102} proved that $b$ is the so-called \emph{persistence exponent} of the two-dimensional heat equation. Other works, including those of Li and Shao~\cite{MR1902188} and Molchan~\cite{MR2999109} improved on the bounds on $b$ obtained by DPSZ. Meanwhile, in the same direction as some of the work of Schehr and Majumdar, Dembo and Mukherjee generalized the results of DPSZ to the case of independent coefficients with polynomially growing variance~\cite{MR3298469}. Despite studying a different family of random polynomials, Calegari and Huang were motivated, after their aforementioned result, to ask whether $b = 3/4$~\cite{MR3687945}.
% ~\cite{MR3298469,MR1902188,MR2329259,MR2999109,MR3687945}
    Finally, almost two decades after DPSZ,  Poplavskyi and Schehr~\cite[Equation (4)]{PhysRevLett.121.150601} used the truncated real ensemble of orthogonal matrices to show that $b = 3/4$. More recently, FitzGerald, Tribe, and Zaboronski~\cite[Proposition~5]{MR4499280} gave a different derivation of the value of $b$, using certain asymptotic results for Fredholm Pfaffians. For the sake of clarity, we choose to write $b$ for the constant in the rest of the paper, even though its value is known to be $3/4$. 
\end{remark}

\begin{remark}
The question of how often a polynomial over $\R$ has a specified number of zeros admits an interesting analogue over the nonarchimedean completions of $\Q$. Specifically, for a prime number $p$, one can ask: what is the probability that a random polynomial (or random monic polynomial) with $p$-adic integer coefficients has exactly $m$ roots in the fraction field $\Q_p$? This and other related questions were studied in detail by Bhargava, Cremona, Fisher, and Gajovi\'{c}~\cite{MR4422614}, who proved that the desired probability is a rational function of $n$, $m$, and $p$. Intriguingly, they show that this rational function is invariant under $p \mapsto 1/p$; this invariance phenomenon was subsequently demonstrated by G., Wei, and Yin to be a consequence of Poincar\'{e} duality for the zeta-functions of certain relevant varieties
\cite{g2023chebotarev}.
\end{remark}

\subsection{Organization}

The rest of this paper is organized as follows. We start in \S\ref{sec-gaussproofs} by proving Theorem~\ref{thm-maingauss}, and then in \S\ref{sec-pfgencoffs}, we deduce Theorems~\ref{thm-main} and~\ref{thm-fixedcoeffszero} (and hence also Theorem~\ref{thm-mainmon}) from Theorem~\ref{thm-maingauss}. We finish in \S\ref{sec-thelast} by proving Theorems~\ref{thm-locmaingen} and~\ref{thm-fixedcoeffszero2} (and hence also Theorem~\ref{thm-locmainmon} and Corollary~\ref{cor-algint}). Each of \S\S\ref{sec-gaussproofs}--\ref{sec-thelast} is divided into two subsections, the first of which proves the relevant ``lower bound'' --- i.e., that the desired probability is at least $n^{-b + o(1)}$ --- and the second of which proves the corresponding ``upper bound'' --- i.e., that the desired probability \mbox{is at most $n^{-b + o(1)}$.}

\section{Proof of Theorem~\ref{thm-maingauss} --- the case of Gaussian coefficients} \label{sec-gaussproofs}

In this section, we prove Theorem~\ref{thm-maingauss}, which is a version of our main result where the random coefficients $(a_i)_i$ are taken to be standard normal random variables. Where necessary, we highlight how the proof readily adapts to obtain Theorem~\ref{thm-fixedcoeffszero} \mbox{in the Gaussian case.}

In the rest of this section, we take $k$, $S$, $(c_i)_{i \in S}$, and $(\gamma_n)_n$ to be as in the setting of Theorem~\ref{thm-maingauss}. Following notation used in~\cite{MR1915821} for the Gaussian case, we denote the random degree-$n$ polynomial with fixed coefficients as $f_{n,S}^b(x)$ and the fully random degree-$n$ polynomial as $f_n^{b,*}(x)$, and we denote the corresponding normalized polynomials as $\hat{f}_{n,S}^b(x)$ and $\hat{f}_n^{b,*}(x)$, respectively. The proof makes crucial use of Slepian's lemma, which is a Gaussian comparison inequality and can be stated as follows (c.f.~\cite[p.~49]{adler1990introduction}):

\begin{unnumlemma}[Slepian] Let $X_t$ and $Y_t$ be two centered Gaussian processes of equal variance on a subset $T \subset \R$. (I.e., take $X_t$ and $Y_t$ such that $\E\big(X_t\big) = \E\big(Y_t\big) = 0$ and $\E\big(X_t^2\big) = \E\big(Y_t^2\big)$ for all $t \in T$.) Suppose that $X_t$ has everywhere greater covariance, so that for all $s, t \in T$, we have $\operatorname{Cov}\big(X_s, X_t\big) \ge \operatorname{Cov}\big(Y_s, Y_t\big)$. Then, for any $\lambda \in \R$, we have $\P\big(\inf_{t \in T} X_t \ge \lambda\big) \ge \P \big(\inf_{t \in T} Y_t \ge \lambda\big)$. 
\end{unnumlemma}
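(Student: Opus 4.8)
The plan is to establish Slepian's inequality by the classical Gaussian interpolation (Price's formula) argument, reducing first to a finite index set. Since in every application of the lemma in this paper the centered Gaussian processes $X_t$ and $Y_t$ are evaluations at $t$ of random polynomials, they have continuous sample paths, so $\inf_{t \in T} X_t$ coincides almost surely with the infimum over a fixed countable dense $D \subseteq T$, and $\{\inf_{t\in D} X_t \ge \lambda\} = \bigcap_{t \in D}\{X_t \ge \lambda\}$ is a decreasing limit over finite subsets $F \subseteq D$ of the events $\{X_t \ge \lambda\ \forall t \in F\}$. Hence it suffices to prove, for every finite $\{t_1,\dots,t_n\} \subseteq T$ and every $\lambda \in \R$, that $\P(X_{t_i} \ge \lambda\ \forall i) \ge \P(Y_{t_i} \ge \lambda\ \forall i)$.

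For the finite case I would argue as follows. Replacing $Y$ by an independent copy of itself (only the covariances enter the conclusion), set $Z^{(h)} \defeq \sqrt h\,(X_{t_1},\dots,X_{t_n}) + \sqrt{1-h}\,(Y_{t_1},\dots,Y_{t_n})$ for $h \in [0,1]$; this is a centered Gaussian vector with covariance $\Sigma(h) = h\Sigma^X + (1-h)\Sigma^Y$, interpolating between $\Sigma^Y$ at $h = 0$ and $\Sigma^X$ at $h = 1$, and whose diagonal entries are independent of $h$ because $\E(X_{t_i}^2) = \E(Y_{t_i}^2)$. Fix $\varepsilon > 0$ and a smooth nondecreasing $\phi_\varepsilon \colon \R \to [0,1]$ with $\phi_\varepsilon \equiv 0$ on $(-\infty,\lambda]$ and $\phi_\varepsilon \equiv 1$ on $[\lambda+\varepsilon,\infty)$, and put $F(z) \defeq \prod_{i=1}^n \phi_\varepsilon(z_i)$, a bounded smooth function with bounded derivatives. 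The key computation is the Gaussian interpolation formula
\[
\frac{d}{dh}\,\E\big(F(Z^{(h)})\big) \;=\; \frac12 \sum_{i \ne j}\big(\Sigma^X_{ij} - \Sigma^Y_{ij}\big)\,\E\Big(\tfrac{\partial^2 F}{\partial z_i\,\partial z_j}(Z^{(h)})\Big),
\]
which follows by differentiating $\E(F(Z^{(h)})) = \int F\, p_{\Sigma(h)}$ under the integral sign, using the heat-type identity $\partial p_\Sigma/\partial \Sigma_{ij} = \partial^2 p_\Sigma/\partial z_i\,\partial z_j$ (valid for $i \ne j$) satisfied by the centered Gaussian density $p_\Sigma$, and integrating by parts twice in $z$; the diagonal terms drop out since $\Sigma(h)$ has $h$-independent diagonal. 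Now $\Sigma^X_{ij} - \Sigma^Y_{ij} = \operatorname{Cov}(X_{t_i},X_{t_j}) - \operatorname{Cov}(Y_{t_i},Y_{t_j}) \ge 0$ by hypothesis, while for $i \ne j$ we have $\partial^2 F/\partial z_i\,\partial z_j = \phi_\varepsilon'(z_i)\phi_\varepsilon'(z_j)\prod_{l \ne i,j}\phi_\varepsilon(z_l) \ge 0$ because $\phi_\varepsilon$ is nonnegative and nondecreasing. Hence the derivative is $\ge 0$, so $\E(F(Z^{(1)})) \ge \E(F(Z^{(0)}))$, i.e.\ $\E\big(\prod_i \phi_\varepsilon(X_{t_i})\big) \ge \E\big(\prod_i \phi_\varepsilon(Y_{t_i})\big)$.

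It remains to pass to the limit. Since $\phi_\varepsilon(z) \le \mathbf{1}\{z \ge \lambda\}$, the left-hand side is at most $\P(X_{t_i} \ge \lambda\ \forall i)$; and as $\varepsilon \downarrow 0$ the functions $\phi_\varepsilon$ increase pointwise to $\mathbf{1}\{\,\cdot\, > \lambda\}$, so by monotone convergence the right-hand side increases to $\P(Y_{t_i} > \lambda\ \forall i)$. Thus $\P(X_{t_i} \ge \lambda\ \forall i) \ge \P(Y_{t_i} > \lambda\ \forall i)$ for every $\lambda$; applying this with $\lambda - \delta$ in place of $\lambda$ and letting $\delta \downarrow 0$ (continuity from above on the left, from below on the right) upgrades it to $\P(X_{t_i} \ge \lambda\ \forall i) \ge \P(Y_{t_i} \ge \lambda\ \forall i)$. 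Combined with the reduction of the first paragraph, $\P(\inf_{t\in T} X_t \ge \lambda) = \inf_{F \subseteq D\ \mathrm{finite}} \P(X_t \ge \lambda\ \forall t\in F) \ge \inf_{F \subseteq D\ \mathrm{finite}} \P(Y_t \ge \lambda\ \forall t \in F) = \P(\inf_{t\in T}Y_t \ge \lambda)$, as desired.

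The main obstacle I anticipate is the rigorous justification of the interpolation formula: differentiating under the integral sign (legitimate by dominated convergence, since $F$ and its first two derivatives are bounded and $p_{\Sigma(h)}$ depends smoothly on $h$) and integrating by parts require $\Sigma(h)$ to be nonsingular, so the degenerate case must be handled separately --- e.g.\ by adding $\eta\cdot\mathrm{Id}$ to both covariances, running the argument, and letting $\eta \downarrow 0$ --- and one must keep careful track of the direction of all the inequalities in the smooth-approximation step. Everything else (the heat-equation identity for $p_\Sigma$, the path-continuity reduction to finite $T$) is routine, the latter being especially transparent here because $X_t$ and $Y_t$ are polynomial evaluations.
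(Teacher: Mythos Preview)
Your proof is the standard Gaussian-interpolation derivation of Slepian's inequality and is correct. Note, however, that the paper does not actually prove this lemma: it is stated without proof and attributed to the literature (Adler's monograph). So there is no ``paper's own proof'' to compare against; the authors simply quote Slepian's lemma as a known tool. Your argument supplies exactly the classical justification one would find in the cited reference, including the reduction to finite index sets (trivial here since the processes are polynomial evaluations), the interpolation $Z^{(h)} = \sqrt{h}\,X + \sqrt{1-h}\,Y$, the heat-equation identity for the Gaussian density, and the smooth approximation of the indicator. The technical caveats you flag --- regularizing a possibly singular $\Sigma(h)$ by adding $\eta\,\mathrm{Id}$, and tracking strict versus non-strict inequalities through the $\varepsilon,\delta$ limits --- are the right ones and are handled correctly.
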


\subsection{Lower bound} \label{sec-low}

To obtain the lower bound, we split into cases according to the parity of $k$. In \S\ref{sec-lowkeven}, we handle the case where $k$ is even, and in \S\ref{sec-lowkodd}, we explain how the argument of \S\ref{sec-lowkeven} can be modified when $k$ is odd. 

\subsubsection{The case where $k$ is even} \label{sec-lowkeven}

Let $k$ be even. By moving the terms $a_{n-1}x^{n-1},\, \dots,\, a_{n-k}x^{n-k}$ to the right-hand side and renormalizing, one readily verifies that the condition $\hat{f}_n^b(x) > \gamma_n(x)$ is equivalent to the following condition:
\begin{equation} \label{eq-equivI1}
    \hat{f}_{n-k}^{b,*}(x) >  \gamma_n(x)\sqrt{1+\frac{ \underset{{i \in  S}}{\sum} c_{i}^2x^{2(n-i)} + \underset{i \in \{1, \dots, k\} \smallsetminus S}{\sum} x^{2(n-i)}}{ \underset{i=1}{\overset{n-k}{\sum}} x^{2(n-k-i)}}} -\frac{\underset{i \in S}{\sum} c_ix^{n-i} + \underset{i \in \{1,\dots,k\} \smallsetminus S}{\sum} a_{n-i}x^{n-i}}{\sqrt{\underset{i = 1}{\overset{n-k}{\sum}}x^{2(n-k-i)}}}. 
\end{equation}
For the purpose of obtaining a lower bound, we can restrict $a_{n-i}$ to lie within the interval $[1/2,1]$ for each $i \in \{1, \dots, k\} \smallsetminus S$. After making this restriction, we can also strengthen the condition~\eqref{eq-equivI1} (i.e., make it less probable) by replacing the right-hand side with a larger quantity. To this end, for each $i \in \{1,\dots,k\} \smallsetminus S$, let $\chi_i \colon \R \to \{0,1/2,1\}$ be the function defined as follows:
$$\chi_i(x) = \begin{cases} 1, & \text{ if $i$ is even and $x < 0$,} \\
1/2, & \text{ if $i$ is odd,} \\ 
0, & \text{ otherwise.}\end{cases}$$
Then define $\gamma_n^*(x)$ by
\begin{equation} \label{eq-gn*}
\gamma_n^*(x) \defeq \gamma_n(x)\sqrt{1+\frac{ \underset{{i \in  S}}{\sum} c_{i}^2x^{2(n-i)} + \underset{i \in \{1, \dots, k\} \smallsetminus S}{\sum} x^{2(n-i)}}{ \underset{i=1}{\overset{n-k}{\sum}} x^{2(n-k-i)}}}  -\frac{\underset{i \in S}{\sum} c_ix^{n-i} + \underset{i \in \{1,\dots,k\} \smallsetminus S}{\sum} \chi_i(x)x^{n-i}}{\sqrt{\underset{i = 1}{\overset{n-k}{\sum}}x^{2(n-k-i)}}}.
\end{equation}
Then the condition~\eqref{eq-equivI1} is implied by the stronger condition $\hat{f}_{n-k}^{b,*}(x) > \gamma_n^*(x)$, and so we have that
\begin{align}
    \P\big(\hat{f}_{n,S}^b(x) > \gamma_n(x),\, \forall x \in \R\big) & \geq \P\big(\hat{f}_{n,S}^b(x) > \gamma_n(x),\, \forall x \in \R; \text{ and } a_{n-i} \in [1/2,1],\, \forall i \in \{1, \dots, k\} \smallsetminus S\big) \nonumber \\
    & \geq \P\big(\hat{f}_{n-k}^{b,*}(x) > \gamma_{n}^*(x),\, \forall x \in \R; \text{ and } a_{n-i}\in [1/2,1],\, \forall i \in \{1, \dots, k\} \smallsetminus S\big) \nonumber \\
    & = \P\big(\hat{f}_{n-k}^{b,*}(x) > \gamma_{n}^*(x),\, \forall x \in \R\big)\times \prod_{\substack{i \in \{1,\dots, k\} \\ i \not\in S}} \P(a_{n-i} \in [1/2,1]). \label{eq-indepprod}
\end{align}
In~\eqref{eq-indepprod}, the product over $i$ is a nonzero constant, depending only on $k$ and $S$. Note that this product is trivial in the setting of Theorem~\ref{thm-fixedcoeffszero}. Thus, it suffices to bound the first probability in~\eqref{eq-indepprod}.

Our assumptions on $\gamma_n(x)$ in the statement of Theorem~\ref{thm-maingauss} imply that there exist integers $\tau_n$ such that $\log\log\log n \lll \tau_n \lll \log n$ and such that $\sup\{\gamma_n(x) : |x| \in [1 - \xi_n,(1-\xi_n)^{-1}]\} \to 0$, where $\xi_n = e^{-\tau_n}$.\footnote{Here, we say that $a \lll a'$ if $a = o(a')$ as $n \to \infty$.} In the setting of Theorem~\ref{thm-fixedcoeffszero}, this changes slightly: we take $\tau_n \ggg \log k(n)$, instead of $\log\log\log n$. Since $k$ is even, the covariance of $f_{n-k}^{b,*}(x)$ is everywhere nonnegative,\footnote{Note that this is not true in the case where $k$ is odd, which therefore necessitates separate treatment (see \S\ref{sec-lowkodd}).} which allows us to apply Slepian's lemma. Doing so, we find that
\begin{align} 
    & \P\big(\hat{f}_{n-k}^{b,*}(x) > \gamma_{n}^*(x),\, \forall x \in \R\big) \geq \label{eq-sleptosplit} \\
    & \qquad \P\big(\hat{f}_{n-k}^{b,*}(x) > \gamma_{n}^*(x),\, \forall x \in [0,(1-\xi_n)^{-1}]\big) \times \P\big(\hat{f}_{n-k}^{b,*}(x) > \gamma_{n}^*(x),\, \forall x \in [-(1-\xi_n)^{-1},0]\big)\times \nonumber \\
    & \qquad\qquad \P\big(\hat{f}_{n-k}^{b,*}(x) > \gamma_{n}^*(x),\, \forall x \in [(1 - \xi_n)^{-1},\infty)\big)  \times \P\big(\hat{f}_{n-k}^{b,*}(x) > \gamma_{n}^*(x),\, \forall x \in (-\infty,-(1 - \xi_n)^{-1}]\big).
    \nonumber
\end{align}
We start by handling the first two probabilities on the right-hand side of~\eqref{eq-sleptosplit}. The idea is to verify that these probabilities are of the form studied in~\cite[\S3]{MR1915821}. Indeed, it follows immediately from the results of~\cite[\S3]{MR1915821} in conjunction with Slepian's lemma that each of the first two probabilities on the right-hand side of~\eqref{eq-sleptosplit} is at least $n^{-b/2+o(1)}$, as long as the following two conditions are satisfied:
\begin{align}
& \sup\big\{\gamma_n^*(x) : x \in [-1+\xi_n,1-\xi_n],\, n \in \mathbb{N}\big\} < \infty, \quad \text{and} \label{eq-whatsneedlow-1} \\
& \sup\big\{\big|\gamma_{n}^*(x)\big| : |x| \in [1-\xi_n,(1-\xi_n)^{-1}]\big\} \to 0  \label{eq-whatsneedlow1}
\end{align}
as $n \to \infty$. To verify the conditions~\eqref{eq-whatsneedlow-1} and~\eqref{eq-whatsneedlow1}, note that we have the following useful bound:
\begin{equation} \label{eq-boundsecterm}
\sup\left\{\frac{|x^{n-j}|}{\sqrt{\sum_{i = 1}^{n-k} x^{2(n-k-i)}}}: |x| \in [0,(1-\xi_n)^{-1}]\right\} = \frac{(1-\xi_n)^{-(n-j)}}{\sqrt{\sum_{i = 1}^{n-k}(1-\xi_n)^{-2(n-i)}}}\ll \frac{\sqrt{\xi_n}}{(1-\xi_n)^{k-j}},
\end{equation}
where the implied constant is absolute. In the first step of~\eqref{eq-boundsecterm}, we have used the fact that the quantity being maximized is increasing in $|x|$. Summing the bound~\eqref{eq-boundsecterm} over $j \in \{1, \dots, k\}$ using our assumption that $\log k \lll \tau_n$, which implies that $k \lll 1/\sqrt{\xi_n} = e^{\tau_n/2}$ (note that this holds in both the settings of Theorems~\ref{thm-maingauss} and~\ref{thm-fixedcoeffszero}), we find that
\begin{equation} \label{eq-bipartite}
\sum_{j = 1}^k \frac{\sqrt{\xi_n}}{(1-\xi_n)^{k-j}} \ll \sqrt{\xi_n} \times \frac{(1 - \xi_n)^{-k}-1}{(1 - \xi_n)^{-1}-1} \ll \sqrt{\xi_n} \times \frac{k\xi_n}{\xi_n}  \ll k\sqrt{\xi_n},
\end{equation}
which tends to zero as $n \to \infty$. From this, we deduce a few consequences: first, the factor multiplied by $\gamma_n(x)$ in~\eqref{eq-gn*} is bounded for $\lvert x \rvert \leq (1 - \xi_n)^{-1}$; and second, the terms being subtracted on the right-hand side of~\eqref{eq-gn*} converge to zero uniformly over $|x| \leq (1-\xi_n)^{-1}$. Note that both of these conclusions continue to be true in the setting of Theorem~\ref{thm-fixedcoeffszero}. The conditions~\eqref{eq-whatsneedlow-1} and~\eqref{eq-whatsneedlow1} now follow from our assumptions on $\gamma_n(x)$.

\smallskip

We now bound the second two probabilities on the right-hand side of~\eqref{eq-sleptosplit}. Since $c_{1} > 0$ if $1 \in S$ and since $\chi_1(x) > 0$ if $1 \not\in S$, and because $\gamma_n(x) < 1$ for all $x \in \R$ and $n \in \mathbb{N}$, there exists a constant $C > 0$, depending only on $S$ and the values of the $c_i$, such that $\gamma_n^*(x) < C$ for all $x$ such that $|x| \in [(1-\xi_n)^{-1},\infty)$. Note that $C$ can be taken to be an absolute constant in the setting of Theorem~\ref{thm-fixedcoeffszero}. Thus, we have that
\begin{align}
 \P\big(\hat{f}_{n-k}^{b,*}(x) > \gamma_{n}^*(x),\,\forall x \in [(1-\xi_n)^{-1},\infty)\big)  & \geq \P\big(\hat{f}_{n-k}^*(x) > C,\,\forall x \in [(1-\xi_n)^{-1},\infty)\big),
\label{eq-probcn1}
\\
\P\big(\hat{f}_{n-k}^{b,*}(x) > \gamma_{n}^*(x),\,\forall x \in (-\infty,-(1-\xi_n)^{-1}]\big)  & \geq \P\big(\hat{f}_{n-k}^*(x) > C,\,\forall x \in (-\infty,-(1-\xi_n)^{-1}]\big),\label{eq-probcn2}
\end{align}
for all $n \gg 1$. The probabilities on the right-hand sides of~\eqref{eq-probcn1} and~\eqref{eq-probcn2} are of the form studied in~\cite[\S3]{MR1915821}, where they were each shown to be at least $n^{o(1)}$. This completes the proof of the lower bound.

\subsubsection{The case where $k$ is odd} \label{sec-lowkodd}

Now let $k$ be odd. Here, in addition to moving the first $k$ terms of the polynomial $f_n^b(x)$ to the right-hand side, we move the (random) term $a_{n-k-1}x^{n-k-1}$ to the right-hand side as well. As before, we restrict $a_{n-k-1}$ to lie within the interval $[1/2,1]$, and we replace $a_{n-k-1}$ with $\chi_{k+1}(x)$, and the rest of the argument proceeds exactly as in \S\ref{sec-lowkeven}.

\subsection{Upper bound} \label{sec-upper}

We now proceed with the proof of the upper bound. We start in \S\ref{sec-upperzero}, where we work out the case in which the fixed coefficients $c_i$ are all zero, and we finish in \S\ref{sec-uppernonzero} by explaining how to deduce the result for general $c_i$ from the case where they are all zero.

\subsubsection{The case where $c_i = 0$ for all $i \in S$} \label{sec-upperzero}

 In this section, we assume that $c_i = 0$ for all $i \in S$. In particular, we may assume without loss of generality that $1 \not\in S$ (recall, on the other hand, that $k \in S$ by assumption), and in particular, we are not in the setting of Theorem~\ref{thm-fixedcoeffszero}, and so we may keep $k$ fixed.

 \bigskip

 \noindent\emph{Notation}. We first set some notation. To bound the probability $\P\big(\hat{f}_{n,S}^{b}(x) > \gamma_{n}(x)\big)$, it is useful to obtain bounds on the covariance $\wt{c}_n$ of $\hat{f}_{n,S}^b$, which is given explicitly as follows:
\begin{align*}
\wt{c}_n(x, y) & = \frac{\mathbb{E}\big(f_{n,S}^b(x) f_{n,S}^b(y)\big)}{\sqrt{\mathbb{E}\big(f_{n,S}^b(x)^2\big) \mathbb{E}\big(f_{n,S}^b(y)^2\big)}} \\
& = \frac{(xy)^{n-k-1}r(xy) + \sum_{i = 1}^{n-k} (xy)^{n-k-i}}{\sqrt{\big(x^{2(n-k-1)}r(x^2) + \sum_{i = 1}^{n-k} x^{2(n-k-i)}\big)\big(y^{2(n-k-1)}r(y^2) + \sum_{i = 1}^{n-k} y^{2(n-k-i)}\big)}},
\end{align*}
where we have set $r(z) \defeq \sum_{\substack{i \in \{1, \dots , k\} \smallsetminus S}} z^{k + 1 - i}$. 
We will also have occasion to use the function $g$ defined by
$$g(x, y) = \frac{\lvert xy - 1\rvert}{\sqrt{\lvert (x^2-1)(y^2-1) \rvert}}.$$
Lastly, for the purpose of proving upper bounds, it suffices to restrict our attention to small subintervals of $\R$. To this end, fix $\delta \in (0,1/2)$, and define the four intervals $\mc{I}_1 \defeq [1 - n^{-\delta},1 - n^{-(1-\delta)}]$, $\mc{I}_2\defeq \mc{I}_1^{-1}$, $\mc{I}_3 \defeq -\mc{I}_2$, and $\mc{I}_4 \defeq -\mc{I}_1$. Set 
\begin{equation} \label{eq-defofv}
V \defeq \bigcup_{i=1}^4 \mc{I}_i \quad \text{and} \quad U \defeq \bigcup_{i = 1}^4 \mc{I}_i^2 \subset V^2.
\end{equation}

\bigskip
 
\noindent \emph{The proof}. Let $n$ be odd or even. Let $\alpha_n \in [0,n^{-\delta/2}]$, to be chosen later, and let $\chi_U \colon \R^2 \to \{0,1\}$ denote the indicator function of the subset $U \subset \R^2$. In~\cite[\S4]{MR1915821}, the upper bound $\P\big(\hat{f}_{n}^{b,*}(x) > \gamma_{n}(x)\big) \leq n^{-b+o(1)}$ is obtained in the case of Gaussian coefficients by constructing an auxiliary Gaussian process $\wt{f}_n(x)$ with covariance given by
\begin{equation} \label{eq-thecov}
    \frac{\big(1 - \alpha_n\big)}{g(x,y)} \chi_U(x,y) + \alpha_n.
\end{equation}
Let $c_n^*(x,y)$ denote the covariance of $\hat{f}_n^*(x) \defeq f_n^*(x)/\E\big(f_n^*(x)^2\big)$. It is shown in~\cite[Lemma 4.1]{MR1915821} that, for a suitable choice of $\alpha_n$, the covariance~\eqref{eq-thecov} is an upper bound on $c_n^*(x,y)$ for all $x,y \in V$, and that this upper bound specializes to an equality when $x = y$.  Thus, Slepian's lemma implies that to bound $\P\big(\hat{f}_{n}^{b,*}(x) > \gamma_{n}(x)\big)$ from above, the process $\hat{f}_n^{b,*}(x)$ can effectively be replaced with the process $\wt{f}_n(x)$, which turns out to be easier to work with.

An identical argument applies to prove that $\P\big(\hat{f}_{n,S}^{b}(x) > \gamma_{n}(x)\big) \leq n^{-b+o(1)}$, so long as we can show that the covariance $\wt{c}_n(x,y)$ is bounded above by~\eqref{eq-thecov}, with equality when $x = y$. We prove this as follows:
\begin{lemma} \label{lem-upbound}
     For $x,y \in V$, and for all $n \gg 1$, there exists $\alpha_n \in [0, n^{-\delta/2}]$ such that
    \begin{equation} \label{eq-ndbound}
    \wt{c}_n(x,y) \leq \frac{\big(1 - \alpha_n\big)}{g(x,y)} \chi_U(x,y) + \alpha_n,
    \end{equation}
    with equality when $x = y$.
\end{lemma}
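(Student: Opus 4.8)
The plan is to reduce the claimed bound on $\wt{c}_n(x,y)$ directly to the known bound on $c_n^*(x,y)$ from \cite[Lemma 4.1]{MR1915821}, by controlling the ratio $\wt c_n(x,y)/c_n^*(x,y)$ on $V^2$ and showing it is at most $1 + o(1)$ uniformly, with equality on the diagonal. The point is that $\wt c_n$ and $c_n^*$ differ only through the ``extra'' polynomial $r(z) = \sum_{i \in \{1,\dots,k\}\smallsetminus S} z^{k+1-i}$, which appears as a bounded-degree perturbation of the geometric sums $\sum_{i=1}^{n-k}(xy)^{n-k-i}$ in the numerator and $\sum_{i=1}^{n-k} x^{2(n-k-i)}$, $\sum_{i=1}^{n-k} y^{2(n-k-i)}$ in the denominator. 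Writing $s_{n-k}(z) \defeq \sum_{i=1}^{n-k} z^{n-k-i} = (z^{n-k}-1)/(z-1)$, we have
\[
\wt c_n(x,y) = \frac{(xy)^{n-k-1} r(xy) + s_{n-k}(xy)}{\sqrt{\big(x^{2(n-k-1)} r(x^2) + s_{n-k}(x^2)\big)\big(y^{2(n-k-1)} r(y^2) + s_{n-k}(y^2)\big)}},
\]
and $c_n^*(x,y)$ is the same expression with $r \equiv 0$ (and $n-k$ replaced by the appropriate degree, which only shifts indices). So the first step is bookkeeping: record the exact relationship between $\wt c_n$ and the DPSZ covariance $c^*$ for a fully random polynomial of degree $n-k$, so that \cite[Lemma 4.1]{MR1915821} applies to the $r\equiv 0$ part.

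The second step, which I expect to be the technical heart, is to bound the multiplicative distortion caused by the $r$-terms on the relevant range $V = \bigcup_{i=1}^4 \mc I_i$, where $\big||x|-1\big|$ lies between $n^{-\delta}$ and $n^{-(1-\delta)}$. On this range one has $|x| = 1 + O(n^{-\delta})$, so $|x|^{n-k-1} r(x^2)$ is of size $O(k) = O(1)$, while $s_{n-k}(x^2) \asymp 1/\big||x|^2 - 1\big| \gg n^{1-\delta} \to \infty$ (using $|x| > 1 - n^{-\delta}$, so $|x|^{n-k}$ stays bounded away from $0$ and $\infty$, hence the geometric sum is comparable to its closed form). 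Thus $x^{2(n-k-1)} r(x^2)/s_{n-k}(x^2) = O(n^{-(1-\delta)}) \to 0$ uniformly on $V$, and likewise for the $xy$ and $y^2$ expressions; here one uses that $|xy - 1|$ and $|x^2-1|,|y^2-1|$ are all polynomially small so the geometric sums dominate. Consequently
\[
\wt c_n(x,y) = c_n^*(x,y) \cdot \frac{1 + \varepsilon_n^{\mathrm{num}}(x,y)}{\sqrt{(1+\varepsilon_n^{(1)}(x))(1+\varepsilon_n^{(2)}(y))}},
\]
where $c_n^*$ denotes the matching DPSZ covariance and all the $\varepsilon$'s are $O(n^{-(1-\delta)})$ uniformly on $U$ (one must be a little careful for $x,y$ of opposite sign, where $r(xy)$ could be negative, but its magnitude is still $O(1)$ against a denominator $\gg n^{1-\delta}$). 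Hence $\wt c_n(x,y) \le c_n^*(x,y)(1 + O(n^{-(1-\delta)}))$ when $c_n^*(x,y) \ge 0$, and since on $U$ we have $c_n^*(x,y) \approx 1/g(x,y) \le 1$ this additive slack is absorbable. Off $U$ but inside $V^2$, the indicator $\chi_U$ vanishes so the right-hand side of \eqref{eq-ndbound} is just $\alpha_n$; here one checks $\wt c_n(x,y) = o(1)$ directly, exactly as DPSZ do for $c_n^*$, because $g(x,y)^{-1}$ is small when $x,y$ lie in different sign-components or far-apart scale ranges and the $r$-perturbation does not change this.

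The third step is to combine: choose $\alpha_n$ to be the DPSZ choice from \cite[Lemma 4.1]{MR1915821} enlarged by a term of order $n^{-(1-\delta)}$ (still inside $[0,n^{-\delta/2}]$ for $n\gg 1$ since $\delta<1/2$), so that $(1-\alpha_n)/g(x,y)\cdot\chi_U + \alpha_n$ simultaneously dominates $c_n^*(x,y)$ (by DPSZ) and absorbs the multiplicative factor $1 + O(n^{-(1-\delta)})$. For the diagonal equality, note that when $x=y$ both numerator and denominator of $\wt c_n(x,x)$ coincide, so $\wt c_n(x,x) = 1$ identically; meanwhile $g(x,x) = |x^2-1|/|x^2-1| = 1$, so the right-hand side of \eqref{eq-ndbound} equals $(1-\alpha_n)\chi_U(x,x) + \alpha_n = 1$ for $x$ with $x^2 \in \bigcup \mc I_i^2$, i.e. $\chi_U(x,x)=1$, which holds for all $x \in V$. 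I expect the main obstacle to be step two — specifically, getting genuinely uniform control of the geometric-sum denominators $s_{n-k}(x^2)$ near $|x|=1$ from below (to ensure they dominate the $O(1)$ perturbation) while keeping the numerator $s_{n-k}(xy)$ estimate compatible across same-sign and opposite-sign pairs $(x,y) \in U$ — but this is exactly the kind of estimate already carried out in \cite[\S4]{MR1915821} for the unperturbed covariance, so the perturbation argument should go through with only notational overhead.
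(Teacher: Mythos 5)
Your strategy---relating $\wt c_n$ to the DPSZ covariance $c_n^*$ of a fully random degree-$(n-k-1)$ polynomial by a multiplicative perturbation factor, and then invoking \cite[Lemma 4.1]{MR1915821}---is a genuinely different route from the paper's. The paper does not compare to $c_n^*$ at all; it re-derives the estimate from scratch by analyzing the quantity $\bigl((g(x,y)\wt c_n(x,y))^2-1\bigr)/(g(x,y)^2-1)$ on $\mc I_1^2$, proving the sharper algebraic inequality $g\wt c_n\le 1$ on $\mc I_2^2$, and showing exponential decay on $V^2\smallsetminus U$.

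There is, however, a genuine gap in your step three, and it is precisely at the place the paper's formulation is designed to handle. On $U$, write the target inequality as $\wt c_n \le \tfrac1g + \alpha_n\bigl(1-\tfrac1g\bigr)$. The ``room'' contributed by $\alpha_n$ is $\alpha_n\cdot\tfrac{g-1}{g}$, and this vanishes as $(x,y)$ approaches the diagonal, where $g\to 1$. Your multiplicative bound $\wt c_n\le c_n^*(1+\epsilon_n)$ adds a slack of size roughly $\epsilon_n\cdot c_n^*\approx \epsilon_n/g$; to cover it by enlarging $\alpha_n$ you would need $\alpha_n\ge \epsilon_n/(g-1)$, which is unbounded near the diagonal if you only know $\epsilon_n=O(n^{-(1-\delta)})$ uniformly (and in fact the uniform rate is only $O(n^{-\delta})$ on the inner edge of $V$, where $|x^2-1|\asymp n^{-\delta}$; your claimed $\gg n^{1-\delta}$ lower bound on $s_{n-k}(x^2)$ is not uniform over $V$). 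Your stated plan---enlarge the DPSZ $\alpha_n$ by a constant of order $n^{-(1-\delta)}$---therefore produces a false inequality for $x\ne y$ close together. The argument can only be salvaged by showing that the perturbation $\epsilon_n(x,y)$ itself vanishes quadratically in $x-y$, at the same rate as $g(x,y)-1\asymp (x-y)^2/|1-x^2|$, so that $\epsilon_n/(g-1)$ stays bounded by $n^{-\delta/2}$; this is exactly the quadratic cancellation the paper's computation of $\bigl((g\wt c_n)^2-1\bigr)/(g^2-1)$ makes visible. In other words, to close the gap you would effectively have to redo the paper's pointwise estimate anyway, so the perturbation framing does not save work here, and as written the ``absorbable slack'' step does not hold. (Your diagonal-equality check is fine, as is the reduction to the $r\equiv 0$ covariance; the issue is only the off-diagonal near-diagonal absorption.)
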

\begin{proof}
First, assume $(x,y) \in U$. That we have equality when $x = y$ is obvious --- indeed, $\wt{c}_n(x,x) = g(x,x) = 1$. Consequently, we may assume that $x \neq y$ (in particular, we can divide by $x -y$, as we do multiple times in what follows). Now, by symmetry, it in fact suffices to take $x,y \in \mc{I}_1$ or $x,y \in \mc{I}_2$. We handle each of these cases separately as follows:

\smallskip
\noindent \emph{Case 1}: $x,y \in \mc{I}_1$. 
To start, note that the product $g(x,y)\wt{c}_n(x,y)$ may be rewritten as follows, writing $\wt{r}(z)$ for $(z - 1) r(z)$:
\begin{align}
    & g(x,y)\wt{c}_n(x,y) = \nonumber \\
    & \frac{|xy-1|}{\sqrt{|(x^2-1)(y^2-1)|}} \times \frac{(xy)^{n-k-1}r(xy) + \sum_{i = 1}^{n-k} (xy)^{n-k-i}}{\sqrt{\big(x^{2(n-k-1)}r(x^2) + \sum_{i = 1}^{n-k} x^{2(n-k-i)}\big)\big(y^{2(n-k-1)}r(y^2) + \sum_{i = 1}^{n-k} y^{2(n-k-i)}\big)}} = \nonumber \\
    & \on{sign}(xy-1) \times \frac{\wt{r}(xy) + 1 - (xy)^{k-n} }{\sqrt{\big(\wt{r}(x^2) + 1 - x^{2(k-n)}\big)\big(\wt{r}(y^2) + 1 - y^{2(k-n)}\big)}}.
    \label{eq-gcnrewrite}
\end{align}
Rearranging~\eqref{eq-ndbound} using~\eqref{eq-gcnrewrite}, and observing that $0 \leq g(x,y)\ll 1$ and $g(x,y)\wt{c}_n(x,y) = 1 + o(1)$ for $x,y \in \mc{I}_1$ we see that it suffices to prove the following bound:
    \begin{align} \label{eq-ndbound3}
n^{-\delta} & \gg \frac{\big(g(x,y)\wt{c}_n(x,y)\big)^2 - 1}{g(x,y)^2-1} = (1-x^2)(1-y^2)  \left[\frac{\wt{r}(xy)^2-\wt{r}(x^2)\wt{r}(y^2)}{(x-y)^2} +  \right. \\
&\qquad \left. \frac{-\wt{r}(x^2)x^2 + 2\wt{r}(xy)xy - \wt{r}(y^2)y^2}{(x-y)^2}   + \frac{-\wt{r}(y^2)x^{-2n} + 2\wt{r}(xy)x^{-n}y^{-n} - \wt{r}(x^2)y^{-2n}}{(x-y)^2}  + \left(\frac{x^{-n}-y^{-n}}{x-y}\right)^2 \right] \nonumber
    \end{align}
The first and second terms in the square brackets on the right-hand side of~\eqref{eq-ndbound3} are easily seen to be $\ll 1$ (where the implied constant depends on $k = \deg r$). The third and fourth terms are each $o(1)$ as $n \to \infty$, and the external factor of $(1-x^2)(1-y^2)$ is $\ll n^{-2\delta}$.

    \smallskip
\noindent \emph{Case 2}: $x,y \in \mc{I}_2$. Here, we prove the stronger result that
$g(x,y)\wt{c}_n(x,y) \leq 1$ (i.e., for this case, we can take $\alpha_n = 0$). Applying~\eqref{eq-gcnrewrite}, squaring both sides of this stronger inequality, cross-multiplying, and moving the terms to the left-hand side, we see that it suffices to prove that
\begin{align} \label{eq-nom}
    \big(\wt{r}(xy)+1\big)^2 - \big(\wt{r}(x^2)+1\big)\big(\wt{r}(y^2)+1\big) \leq 0.
\end{align}
Denote the left-hand side of~\eqref{eq-nom} by $L(x,y)$, and for any integer $m \geq k =  \deg r$, let $L_m(x,y)$ denote the same expression with $\wt{r}(z)$ replaced by $\wt{r}(z) + z^m$. Then it suffices to prove that $\wt{L}_m(x,y) \defeq L_m(x,y) - L(x,y) \leq 0$. We compute that
\begin{align}
    \frac{\wt{L}_m(x,y)}{(x-y)^2} & = x^{2m}y^{2m} + \left(\frac{x^m-y^{m}}{x-y}\right)^2-\left(\frac{x^{m+1}-y^{m+1}}{x-y}\right)^2 + \label{eq-line1} \\
    & \qquad\qquad\qquad\qquad\qquad  2\left(\frac{xy-1}{x-y}\right)^2r(xy)x^{m}y^{m} -\frac{(x^2-1)(y^2-1)}{(x-y)^2}\big(r(y^2)x^{2m}+r(x^2)y^{2m}\big). \nonumber
\end{align}
We start by estimating the first line of the right-hand side of~\eqref{eq-line1}. For $x,y \in \mc{I}_2$, the first term is $1 + o(1)$, the second is $m^2 + o(1)$, and the third is $-(m+1)^2 + o(1)$, making for a total of $-2m + o(1)$. As for the second line of~\eqref{eq-line1}, assume that $r(z) = z^e$, where $e \in \{1, \dots, k\}$. Under this assumption, the second line of~\eqref{eq-line1} may be conveniently reexpressed as follows:
\begin{equation} \label{eq-setereex}
x^{2k}y^{2k}\left[\left(\frac{x^{m-k+1}-y^{m-k+1}}{x-y}\right)^2 + x^{2}y^{2}\left(\frac{x^{m-k-1}-y^{m-k-1}}{x-y}\right)^2-(x^2y^2+1)\left(\frac{x^{m-k}-y^{m-k}}{x-y}\right)^2\right]
\end{equation}
For $x,y \in \mc{I}_2$, the quantity~\eqref{eq-setereex} the first term is $(m-k+1)^2 + o(1)$, the second is $(m-k-1)^2 + o(1)$, and the third is $-2(m-k)^2 + o(1)$, making for a total of $2 + o(1)$. Thus, when $r$ is a monomial, the second line of~\eqref{eq-line1} contributes $2 + o(1)$. Since this second line is linear in $r$, it follows that for any $r$, we get a contribution of at most $2k + o(1) \leq 2m-2 + o(1)$. We conclude that $\wt{L}_m(x,y) \leq (x-y)^2(-2 + o(1))$ for all $x,y \in \mc{I}_2$, so taking $n \gg k$ to be sufficiently large yields the desired inequality.

\bigskip
\noindent Finally, consider the case where $x,y \in V \smallsetminus U$. Without loss of generality, we may assume that $|x| \in \mc{I}_1$ and $|y| \in \mc{I}_2$. Then $y^{-n} \ll e^{-n^{1-\delta}}$ and $x^{-n} \gg e^{n^{\delta}}$, from which we can apply~\eqref{eq-gcnrewrite} to deduce that $|g(x,y)\wt{c}_n(x,y)| \ll e^{-n^\delta}$. Since $|g(x,y)| \ll n^{1-\delta}$, the desired bound follows.
\end{proof}
The upper bound  $\P\big(\hat{f}_{n,S}^{b}(x) > \gamma_{n}(x)\big) \leq n^{-b+o(1)}$ is then deduced from Lemma~\ref{lem-upbound} exactly as the upper bound $\P\big(\hat{f}_{n}^{b,*}(x) > \gamma_{n}(x)\big) \leq n^{-b+o(1)}$ is deduced from~\cite[Lemma~4.1]{MR1915821}; we omit the details for the sake of brevity. 

\subsubsection{The case of general $c_i$} \label{sec-uppernonzero}

We now take $c_i \in \R$ for each $i \in S$ in such a way that $c_1 > 0$ if $1 \in S$. Let $f_{n,0}(x) \defeq f_{n,S}^b(x) - \sum_{i \in S}c_ix^{n-i}$, and as usual, let $\hat{f}_{n,0}(x) \defeq f_{n,0}(x)/\E\big(f_{n,0}(x)^2\big)$. Define $\gamma_n^{**}(x)$ by
$$\gamma_n^{**}(x) \defeq \gamma_n(x)\sqrt{1+\frac{ \underset{{i \in  S}}{\sum} c_{i}^2x^{2(n-i)} + \underset{i \in \{1, \dots, k\} \smallsetminus S}{\sum} x^{2(n-i)}}{ \underset{i=1}{\overset{n-k}{\sum}} x^{2(n-k-i)}}}  -\frac{\underset{i \in S}{\sum} c_ix^{n-i}}{\sqrt{\underset{i = 1}{\overset{n-k}{\sum}}x^{2(n-k-i)}}}.$$
(Note the distinction between $\gamma_n^*(x)$, defined in~\eqref{eq-gn*}, and $\gamma_n^{**}(x)$.) 
Evidently the condition $\hat{f}_{n,S}^b(x) > \gamma_n(x)$ is equivalent to the condition $\hat{f}_{n,0}(x) > \gamma_n^{**}(x)$, so we have that
\begin{align*}
   \P\big(\hat{f}_{n,S}^{b}(x) > \gamma_{n}(x),\, \forall x \in \R\big) & = \P\big(\hat{f}_{n,0}(x) > \gamma_{n}^{**}(x),\, \forall x \in \R\big) \leq
   \P\big(\hat{f}_{n,0}(x) > \gamma_{n}^{**}(x),\, \forall x \in V\big).
\end{align*}
Now, it is shown in \S\ref{sec-upperzero} that
\begin{equation} \label{eq-limsupredv}
    \P\big(\hat{f}_{n,0}(x) > \gamma_{n}^{**}(x),\, \forall x \in V\big) \leq n^{-b+o(1)},
\end{equation}
as long as $|\gamma_n^{**}(x)| \to 0$ uniformly for $x \in V$. To verify that this condition holds, we imitate the bound in~\eqref{eq-boundsecterm} to find that
\begin{equation} \label{eq-boundsecterm2}
\sup\left\{\frac{|x^{n-j}|}{\sqrt{\sum_{i = 1}^{n-k} x^{2(n-k-i)}}}: x \in V\right\} = \frac{(1-n^{-\delta})^{-(n-j)}}{\sqrt{\sum_{i = 1}^{n-k}(1-n^{-\delta})^{-2(n-i)}}}\ll \frac{n^{-\delta/2}}{(1-n^{-\delta})^{k-j}}.
\end{equation}
Summing the bound~\eqref{eq-boundsecterm2} over $j \in \{1,\dots,k\}$ just as in~\eqref{eq-bipartite}, we obtain a bound of $kn^{-\delta/2}$, which converges to zero as $n \to \infty$. Note that this convergene holds even in the setting of Theorem~\ref{thm-fixedcoeffszero}, where $k = n^{o(1)}$.
\bigskip

This completes the proof of Theorem~\ref{thm-maingauss}, as well as the proof of Theorem~\ref{thm-fixedcoeffszero} in the case where the coefficient distributions are Gaussians.

\section{Proofs of Theorems~\ref{thm-main} and~\ref{thm-fixedcoeffszero} --- the case of general coefficients} \label{sec-pfgencoffs}

In~\cite[\S5]{MR1915821}, the authors use the  Koml\'os-Major-Tusn\'ady (henceforth, KMT) strong approximation theorem (see, e.g.,~\cite{MR0402883}) to extend their asymptotic formula for $P_n^*$ in the case of Gaussian coefficients to the case where the coefficient distribution is arbitrary (having zero mean, unit variance, and finite moments of all orders). The rough idea of their argument is to partition the random polynomial into several chunks, and to analyze the contributions of each chunk separately.

The purpose of this section is to deduce Theorems~\ref{thm-main} and~\ref{thm-fixedcoeffszero} (and hence also Theorem~\ref{thm-mainmon}) from Theorem~\ref{thm-maingauss} by means of an analogous method. The key difference is that one of the chunks of the polynomial contains all of the terms with the fixed coefficients. It is precisely to control the behavior of this chunk that we impose the ``niceness'' condition introduced in Definition~\ref{def-nice}. The following simple example demonstrates the necessity of the niceness condition:
\begin{example} \label{eg-probzero}
   Let the law of the $a_i$ be the uniform distribution from $[-\sqrt{3},\sqrt{3}]$, let $k = 2$, let $S = \{1,2\}$, and let $c_1 = 1$ and $c_2 = -2024$. Then, no matter what values are taken by the $a_i$, and for every $n \geq 2$, the polynomial $f_{n,S}(x)$ takes a negative value at $x = 2$. Indeed, we have in this case that
   $$f_{n,S}(2) \leq 2^{n-1} - 2024 \times 2^{n-2} + \sqrt{3} \times \big(2^{n-2}-1) \leq 2^{n-2} \times \big(2 - 2024 + \sqrt{3}\big) < 0.$$
\end{example}

\subsection{Approximation of coefficients by Gaussians via KMT} \label{sec-kmtcoupling}

We start by applying the KMT strong approximation theorem. In doing so, we shall make use of the following observation: for every strictly increasing sequence of nonnegative even integers $0 = k_0 < k_1 < \dots <  k_\ell $ and every $x \in [-1,1]$, the sequence
\begin{equation*}
\big\{x^{k_j} - x^{k_{j+1}} : j\in \{0,\ldots,\ell-1\}\big\} \cup \big\{x^{k_\ell}\big\}
\end{equation*}
forms a probability distribution (i.e., each term is nonnegative, and the sum of the terms is $1$). Thus, for any $s_0,\dots,s_k \in \R$, we have that
\be\label{able}
\left|s_0 + \sum_{j=1}^\ell (s_j-s_{j-1}) x^{k_j} \right|
= \left|s_\ell x^{k_\ell} +  \sum_{j=0}^{\ell-1} s_{j} (x^{k_j} - x^{k_{j+1}}) \right|
\leq \max_{0 \leq j \leq \ell} \big|s_{k_j}\big| \,.
\ee

Now, let $k \geq 1$, let $m \gg k$, and choose any subset $S \subset\{1,\dots,k\}$ that is either empty or contains $k$. Because $\E(a_i)=0$ and $\E(a_i^2)=1$ for each $i \in \mathbb{N}$, a double application of the KMT strong approximation theorem yields the following result: we can redefine the random variables $\big\{a_i : i \in \{0, \dots, m-1\} \text{ and } m-i \not\in S\big\}$ on a new probability space
with a corresponding sequence of
independent standard normal random variables $\big\{b_i : i\in \{0,\dots,m-1\} \text{ and } m-i \not\in S\big\}$
such that for any $p\geq 2$, some $\chi_p > 0$ that depends only on $p$, and all $t>0$, we have that
\be\label{sap}
\P\left(\max_{0 \leq j \leq \frac{n-1}{2}} \left|\sum_{\substack{ i \in \{0,\dots,j\} \\ n-2i \not\in S}} \big(a_{2i} - b_{2i}\big)\right|
\geq t\right) +
\P\left(\max_{0 \leq j \leq \frac{n-3}{2}} \left|\sum_{\substack{i \in \{0,\dots,j\} \\ n-(2i+1) \not\in S}} \big(a_{2i+1} -b_{2i+1}\big)\right|
 \geq t\right) \leq \chi_p n \E|a_0|^p t^{-p} .
\ee
Note that we can express the right-hand side of~\eqref{sap} solely in terms of $a_0$ because the coefficients $a_i$ are identically distributed. By the triangle inequality, we have
\be
\label{able2}
\left|\sum_{\substack{i \in \{0,\dots, m-1\} \\ m-i \not\in S}} (a_i - b_i) x^i\right| \leq
\left|\sum_{\substack{i \in \{0,\dots, \lfloor \frac{m-1}{2} \rfloor\} \\ m-2i \not\in S}} (a_{2i}-b_{2i}) x^{2i}\right|
+\left|\sum_{\substack{i \in \{0,\dots,\lfloor \frac{m-2}{2} \rfloor\} \\ m-(2i+1) \not\in S}} (a_{2i+1}- b_{2i+1})
x^{2i+1}\right|
\ee
so applying \eqref{able} twice --- first taking $s_j=\sum_{\substack{i \in \{0,\dots,j\} \\ m-2i \not\in S}} (a_{2i}-b_{2i})$, and then taking
$s_j=\sum_{\substack{i \in \{0,\dots, j\} \\ m-(2i+1) \not\in S}}(a_{2i+1}-b_{2i+1})$ --- and combining the result with~\eqref{sap} and~\eqref{able2} yields that for all $m \leq n$, we have
\be\label{sapc1}
\P\left(\sup_{|x| \leq 1} \left|f_{m,S}(x)-f_{m,S}^b(x)\right| \geq  2 t \right) \leq \chi_p n \E|a_0|^p t^{-p},
\ee
where $f_{m,S}^b$ is defined by
$$f_{m,S}^b(x) \defeq \sum_{i \in S} c_i x^{m-i} + \sum_{\substack{i \in \{1, \dots, m\} \\ i \not\in S}} b_{m-i}x^{m-i}.$$

Notice that~\eqref{sapc1} only covers the region where $|x| \leq 1$. To handle the region $|x| \geq 1$, we invert $x$ and consider the random polynomial in reverse. Indeed, define
$$
g_{m,S}(x) \defeq x^{m-1} f_{m,S}(x^{-1}) = \sum_{i\in S} c_{i} x^{i-1} + \sum_{\substack{i \in \{1, \dots, m\} \\ i \not\in S}} a_{m-i} x^{i-1} \;,
$$
and similarly define $g_{m,S}^b(x) \defeq x^{m-1} f_{m,S}^b(x^{-1})$. By an analogous argument, we also obtain for all $m \leq n$ the bound
\be\label{sapc2}
\P\left(\sup_{|x| \leq 1} \left|g_{m,S}(x)-g_{m,S}^b(x)\right| \geq  4t\right) \leq \chi_p n \E|a_0|^p t^{-p}.
\ee
Indeed, bounding $\big|g_{m,S}-g_{m,S}^b\big|$ amounts to replacing $(a_i,b_i)$ with
$(a_{m-1-i},b_{m-1-i})$ --- which leads us to take $s_j = \sum_{\substack{i \in \{0,\dots, j\} \\ m-2i \not\in S}} (a_{m-1-2i}-b_{m-1-2i})$ first, and to take next $s_j = \sum_{\substack{i \in \{0,\dots, j\} \\ m-(2i+1) \not\in S}} (a_{m-1-(2i+1)}-b_{m-1-(2i+1)})$. Note that the approximation error doubles from $2t$ to $4t$ because the partial sums $s_j$ are now computed in reverse; for instance, we have
$$\max_{0 \leq j \leq \lfloor \frac{m-1}{2} \rfloor} \left|\sum_{\substack{ i \in \{0,\dots,j\} \\ 2i+1 \not\in S}} \big(a_{m-1-2i} - b_{m-1-2i}\big)\right| \geq 2t \,\,\Longrightarrow\,\, \max_{0 \leq j \leq \lfloor\frac{m-1}{2}\rfloor} \left|\sum_{\substack{ i \in \{0,\dots,j\} \\ m-2i \not\in S}} \big(a_{2i} - b_{2i}\big) \right|  \geq t.$$

\subsection{Notation and setup} \label{sec-allthens}

In this section, we introduce the notation necessary for us to partition the random polynomial into pieces and complete the proof of Theorem~\ref{thm-main}. Let $n,\,k,\,S$ be as in the setting of the theorem, and for $m \gg k$, define the following quantities
$$\sigma_{m,S}(x)\defeq \sqrt{\E\big(f_{m,S}(x)^2\big)} \quad \text{and} \quad \wt{\sigma}_{m,S}(x) \defeq \sqrt{\E\big(g_{m,S}(x)^2\big)},$$
and following~\eqref{eq-thehats}, define the normalized random polynomials
\begin{align*}
& \hat{f}_{m,S}(x)\defeq \frac{f_{m,S}(x)}{\sigma_{m,S}(x)}\quad \text{and} \quad \hat{f}^b_{m,S}(x) \defeq \frac{f^b_{m,S}(x)}{\sigma_{m,S}(x)}; \\
& \hat{g}_{m,S}(x)\defeq \frac{g_{m,S}(x)}{\wt\sigma_{m,S}(x)}\quad \text{and} \quad \hat{g}^b_{m,S}(x) \defeq \frac{g^b_{m,S}(x)}{\wt\sigma_{m,S}(x)}.
\end{align*}
Following the convention in \S\ref{sec-theintro}, we write $f_m \defeq f_{m,\varnothing}$, $g_m \defeq g_{m,\varnothing}$, and $\sigma_m \defeq \sigma_{m,\varnothing} = \wt{\sigma}_{m,\varnothing}$.
Just as in~\cite[Equation (5.6)]{MR1915821}, we introduce the following list of $n$-dependent quantities and functions:
\be \label{eq-thelistofall}
\begin{array}{lll}
p_n:&p_n\uparrow \infty, \chi_{p_n} \E|a_0|^{p_n}\leq n&
\chi_{p_n} \,\mbox{\rm is the KMT constant in \eqref{sap}}\\
{\epsilon}_n:&{\epsilon}_n\downarrow 0, {\epsilon}_n\geq
\max\{20/p_n,(\log n)^{-1/2}\},&{\epsilon}_n\,\, \mbox{\rm is the
 smallest possible value}\\
& \,\,\,\,\,\,\,\,\,\,\,\,\,\,\,\,\,\,\,\,\,\,\,\,\,\,\,2n^{3{\epsilon}_n}=2^j\,\, \mbox{\rm for some $j \in \mathbb{N}$}&
 \mbox{\rm
  \,\,\,\,\,\,\,\,\,\,\,\,\,\, satisfying the constraints
}\\
m_n:&m_n\to\infty, m_n=2n^{3{\epsilon}_n}& \,\,
\mbox{\rm }\\
\ol{\gamma}_n:&
\ol{\gamma}_n(x)=\max \{0,\gamma_n(x),\gamma_n(x^{-1})\}&
\,\,
\mbox{\rm }\\
\ol{\sigma}_n:&
\ol{\sigma}_n(x)=\max \{\sigma_{n,S}(x),\wt{\sigma}_{n,S}(x)\}&
\,\,
\mbox{\rm }\\
\rho_n:&\rho_n\to 0,
\rho_n = \sup_{|x| \leq 1-m_n^{-1}} \; \{ \ol\sigma_n(x) \ol{\gamma}_n(x) \}&
 \rho_n\leq c n^{-\delta/2},\,\, \mbox{\rm for
some $c>0$}\\
r_n:&c n^{-\delta/2}& \mbox{\rm for $n\geq 3m_n$; $c$ is as in bound on
$\rho_n$}\\
\xi_n:&\xi_n(x)=6x^m \sigma_{n-2m}(x)\ol{\gamma}_n(x)&
\end{array}
\ee
Partition the interval $[-1,1]$ into
$\mc{I} \defeq \{ x : |x| \geq 1-0.5 n^{- {\epsilon_n}} \}$ and $\mc{I}^c \defeq [-1,1]
\smallsetminus \mc{I}$. Then the constant $6$ in the definition of the function $\xi_n(x)$ in~\eqref{eq-thelistofall} is chosen to ensure that, for all $n \gg 1$, we have $\xi_n(x) \geq \ol{\sigma}_n(x) \ol{\gamma}_n(x)$ for all $x \in \mc{I}$ such that $|x| \geq 1 - m_n^{-1}$. Since $r_n \geq \ol{\sigma}_n(x)\ol{\gamma}_n(x)$ for all $x$ such that $|x| \leq 1 - m_n^{-1}$, it follows that $2r_n + \xi_n(x) \geq \ol{\sigma}_n(x) \ol{\gamma}_n(x)$ for all $n \gg 1$ and $x \in \mc{I}$.
\smallskip

Next, let
$f_{n,S}=f_{n,S}^L+f_{n,S}^M+ f_{n,S}^H$ where
\be \label{LMH}
f_{n,S}^L(x) \defeq \sum_{i=0}^{m_n-1} a_i x^i, \qquad
f_{n,S}^M(x) \defeq \sum_{i=m_n}^{n-(m_n+1)} a_i x^i, \qquad
f_{n,S}^H(x) \defeq \sum_{i \in S} c_ix^{n-i} + \sum_{\substack{i \in \{1, \dots, m_n\} \\ i \not\in S}} a_{n-i} x^{n-i}.
\ee
Similarly, we let $g_{n,S}=g_{n,S}^L+g_{n,S}^M+ g_{n,S}^H$, where for each $\bullet \in \{L,M,H\}$ we define $g_{n,S}^{\bullet}(x) \defeq x^{n-1}f_{n,S}^{\bullet}(x^{-1})$.

\subsection{Lower bound}

We first handle the lower bound in Theorem~\ref{thm-main}, leaving the upper bound to \S\ref{sec-uppergens}. With notation as in \S\ref{sec-allthens}, we have the following chain of implications for all $n \gg 1$:
\begin{eqnarray*}
\hat{f}_{n,S}(x) > \gamma_n(x) ,\, \forall x \in \R 
& \Longleftarrow & \hat{f}_{n,S}(x) >\ol{\gamma}_n(x) ,\, \hat{g}_{n,S}(x) > \ol{\gamma}_n(x), \, \forall  x \in [-1,1] \\
& \Longleftarrow & f_{n,S}^M(x) > \xi_n(x),\; g_{n,S}^M(x) > \xi_n(x) : \forall x \in \mc{I}, \quad \text{and} \\
&& \qquad f_{n,S}^M(x) >-r_n,\; g_{n,S}^M(x) >-r_n : \forall  x \in \mc{I}^c,\quad \text{and}\\
&& \qquad\qquad f_{n,S}^L(x) >3r_n, \; g_{n,S}^L(x) \geq -r_n : \forall x \in [-1,1],\quad \text{and}\\
&& \qquad\qquad\qquad f_{n,S}^H(x) \geq -r_n, \; g_{n,S}^H(x) > 3r_n : \forall x \in [-1,1].
\end{eqnarray*}
Since the polynomial pairs $\big(f_{n,S}^L,g_{n,S}^L\big)$, $\big(f_{n,S}^M,g_{n,S}^M\big)$ and
$\big(f_{n,S}^H,g_{n,S}^H\big)$ are mutually independent, \mbox{it follows that}
\begin{align}
& P_{n,S,\gamma_n} =
\P\big( \hat{f}_{n,S}(x) >\gamma_n(x),\, \forall x \in \R \big) \geq \nonumber  \\
& \quad \P\Big(
 \big\{f_{n,S}^M (x) >\xi_n(x), \; g_{n,S}^M(x) >\xi_n(x),
\, \forall \ x \in \mc{I}\big\} \cap 
\big\{f_{n,S}^M(x) >-r_n, \; g_{n,S}^M(x) > -r_n, \, \forall  x \in \mc{I}^c\big\} \Big) \times \label{eq-thesplitters} \\
&\quad \,\,\,\, \P\big( f_{n,S}^L(x) >3r_n, \, g_{n,S}^L(x) \geq -r_n ,\, 
\forall x \in [-1,1] \big) \times  \P\big( f_{n,S}^H(x) \geq -r_n, \, g_{n,S}^H(x) > 3r_n ,\,
\forall x \in [-1,1] \big). \nonumber
\end{align}
Call the three factors in~\eqref{eq-thesplitters} $\mc{Q}_1$, $\mc{Q}_2$, and $\mc{Q}_3$, respectively. The factor $\mc{Q}_1$ was estimated in~\cite[\S5, (5.9)--(5.10)]{MR1915821} using the strong approximation results~\eqref{sapc1} and~\eqref{sapc2} in the case $S = \varnothing$, and a lower bound of 
\begin{equation} \label{eq-curlq1bound}
\mc{Q}_1 \geq n^{-b+o(1)}
\end{equation}
was obtained. The factor $\mc{Q}_2$ was also estimated in~\cite[\S5, paragraphs immediately following (5.10)]{MR1915821}, where a lower bound of 
\begin{equation} \label{eq-curlq2bound}
\mc{Q}_2 \geq n^{o(1)}
\end{equation}
was obtained. (To be clear, in \emph{loc.~cit.}, the probabilities $\mc{Q}_i$ are expressed as $\mc{Q}_1 = Q_1 - 2Q_2$ and $\mc{Q}_2 = Q_3 - Q_4$, where each $Q_i$ is a certain probability, and they bound the $\mc{Q}_i$ by controlling each of the $Q_i$ separately.)

As for bounding $\mc{Q}_3$, define the following probabilities:
\begin{align*}
    \mc{Q}_3' & \defeq \P\left(g_{m_n,S}(x) > 3r_n, \forall x \in [-1,1]; \text{ and } x^{n-m_n}f_{m_n,S}(x) \geq -r_n, \forall x \in \pm[1-m_n^{-1},1]\right),\\
    \mc{Q}_4' & \defeq \P\left( x^{n-m_n} f_{m_n,S}(x) \leq -r_n,\, \mbox{for some} \ x \text{ such that } |x| \leq 1 - m_n^{-1}\right).
\end{align*}
Then $\mc{Q}_3 \geq \mc{Q}_3' - \mc{Q}_4'$, so it suffices to determine lower and upper bounds on $\mc{Q}_3'$ and $\mc{Q}_4'$, respectively. To bound $\mc{Q}_4'$, recall that $m_n=2 n^{3{\epsilon}_n} $ and ${\epsilon}_n \to 0$. It then follows that
for all $n \gg 1$, we have
\begin{equation} \label{eq-q4firstbound}
\mc{Q}_4' \leq \P\left( \sup_{|x| \leq 1 - m_n^{-1}} |x|^{n-m_n} \big|f_{m_n}(x)\big| \geq r_n \right)
\leq \P\left( \sup_{|x| \leq 1 - m_n^{-1}} \big|f_{m_n}(x)\big|\geq  e^{\sqrt{n}}\right).
\end{equation}
To proceed, we make use of the following lemma (this is~\cite[Lemma~5.1]{MR1915821}; see \cite{MR0455094} for a proof).
\begin{lemma}\label{moment}
Let $\{T_x : x \in [a, b]\}$ be an almost surely continuous
stochastic process with $T_a=0$.
Assume that $\E\big|T_x-T_y\big|^2 \leq K (x-y)^2$ for all $x, y \in [a,b]$. 
Then, we have that
$$
\E\left( \sup_{x \in [a,b]} T_x^2 \right) \leq 4 K (b-a)^2.
$$
\end{lemma}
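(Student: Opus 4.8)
The plan is to prove Lemma~\ref{moment} by a dyadic chaining argument, organized as a \emph{balanced} recursion over subintervals, so that the relevant $L^2$-norms combine through the Pythagorean bound $\sqrt{A^2+B^2}$ rather than the triangle-inequality bound $A+B$. The only genuinely non-routine point — and the reason one must set things up this way — is that this balancing is exactly what forces the constant down to $4$: a more naive chaining, bounding $\sup_{x\in[a,b]}|T_x|$ by the sum over dyadic scales $m$ of the largest increment at scale $m$ and applying Minkowski directly, would instead produce the worse constant $(1+\sqrt2)^2 = 3+2\sqrt2 \approx 5.83$.

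First I would reduce to a discrete problem. For $M\ge 0$ let $\mc{D}_M\defeq\{a+j(b-a)2^{-M}:0\le j\le 2^M\}$ be the level-$M$ dyadic grid on $[a,b]$; these grids are nested and their union is dense in $[a,b]$. Since $T$ is almost surely continuous, $\sup_{x\in[a,b]}T_x^2=\lim_{M\to\infty}\max_{x\in\mc{D}_M}T_x^2$ almost surely, and this limit is nondecreasing, so by the monotone convergence theorem it suffices to show $\E\big(\max_{x\in\mc{D}_M}T_x^2\big)\le 4K(b-a)^2+o(1)$ as $M\to\infty$. (This step is routine but genuinely uses the continuity hypothesis, and it is also what makes $\sup_{x\in[a,b]}T_x^2$ measurable.)

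For the discrete estimate, fix $M$ and, for each dyadic subinterval $J=[u,v]$ of the level-$m$ partition of $[a,b]$ (so $v-u=(b-a)2^{-m}$ with $0\le m\le M$), set $G(J)\defeq\big(\E\max_{x\in J\cap\mc{D}_M}(T_x-T_u)^2\big)^{1/2}$ and $\gamma_m\defeq\max\{G(J):J\text{ a level-}m\text{ interval}\}$; these are all finite, and the target quantity is $G([a,b])=\gamma_0$ (since $a\in\mc{D}_M$ and $T_a=0$). Splitting $J$ into its two level-$(m+1)$ halves $J_L,J_R$ with common midpoint $c=(u+v)/2$, the triangle inequality $|T_x-T_u|\le|T_c-T_u|+|T_x-T_c|$ for $x\in J_R$, combined with the elementary fact that $\max(p,q+r)\le r+\max(p,q)$ for $r\ge 0$, gives
$$\max_{x\in J\cap\mc{D}_M}|T_x-T_u|\le|T_c-T_u|+\max\Big(\max_{x\in J_L\cap\mc{D}_M}|T_x-T_u|,\ \max_{x\in J_R\cap\mc{D}_M}|T_x-T_c|\Big).$$
Taking $L^2$-norms and using Minkowski's inequality together with $\max(B,C)^2\le B^2+C^2$ for $B,C\ge 0$ and the hypothesis $\E(T_c-T_u)^2\le K(c-u)^2=\tfrac14K(b-a)^22^{-2m}$, one obtains the recursion $\gamma_m\le\tfrac12\sqrt{K}\,(b-a)\,2^{-m}+\sqrt2\,\gamma_{m+1}$ for $0\le m\le M-1$, with base case $\gamma_M\le\sqrt{K}(b-a)2^{-M}$ (a level-$M$ interval meets $\mc{D}_M$ only in its two endpoints).

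Finally I would unroll the recursion from $m=M$ down to $m=0$:
$$\gamma_0\le\sqrt{K}\,(b-a)\Big(\tfrac12\sum_{j=0}^{M-1}2^{-j/2}+2^{-M/2}\Big)\le\sqrt{K}\,(b-a)\Big(\tfrac12\sum_{j=0}^{M-1}(3/4)^j+(3/4)^M\Big)\le\sqrt{K}\,(b-a)\big(2+(3/4)^M\big),$$
where the middle step uses the crude bound $\sqrt2\le\tfrac32$ to make the geometric series sum to exactly $2$ in the limit. Hence $\E\big(\max_{x\in\mc{D}_M}T_x^2\big)=\gamma_0^2\le K(b-a)^2\big(2+(3/4)^M\big)^2\to 4K(b-a)^2$ as $M\to\infty$, which by the reduction above gives $\E\big(\sup_{x\in[a,b]}T_x^2\big)\le 4K(b-a)^2$. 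As noted, the one place where care matters for the value of the constant is recursing on a balanced bisection, so that the two half-intervals contribute $\sqrt{\gamma_{m+1}^2+\gamma_{m+1}^2}=\sqrt2\,\gamma_{m+1}$ rather than $2\gamma_{m+1}$; it is this $\sqrt2$, rounded up to $3/2$, that makes the series converge to the value yielding $4$.
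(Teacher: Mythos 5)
The paper does not supply its own proof of this lemma: it is quoted verbatim from DPSZ as Lemma~5.1 of \cite{MR1915821}, and both there and here the proof is delegated to the external reference \cite{MR0455094}. Your argument is therefore a self-contained replacement rather than a reproduction of anything in the paper, and I have checked it carefully: it is correct. The reduction to the dyadic grid via almost-sure continuity and monotone convergence is sound; the pointwise inequality $\max_{J\cap\mc{D}_M}|T_x-T_u|\le|T_c-T_u|+\max(A,B)$ (with $A,B$ the two half-interval maxima) is verified correctly from $\max(p,q+r)\le r+\max(p,q)$; Minkowski plus $\max(A,B)^2\le A^2+B^2$ yields the recursion $\gamma_m\le\tfrac12\sqrt{K}(b-a)2^{-m}+\sqrt2\,\gamma_{m+1}$; the base case $\gamma_M\le\sqrt{K}(b-a)2^{-M}$ is right since a level-$M$ cell meets $\mc{D}_M$ only at its endpoints; and unrolling gives $\gamma_0\le\sqrt{K}(b-a)\bigl(\tfrac12\sum_{j<M}2^{-j/2}+2^{-M/2}\bigr)$, which after your crude replacement $2^{-1/2}\le 3/4$ tends to $2\sqrt{K}(b-a)$. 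Two small remarks, neither affecting correctness. First, the $M$-dependent slack in $2+(3/4)^M$ is unnecessary: the geometric sum is $\le 2$ uniformly in $M$, so you already have $\gamma_0\le 2\sqrt{K}(b-a)$ for every $M$ and do not need to pass to a limit in the final inequality. Second, you are actually leaving something on the table: keeping the exact ratio $2^{-1/2}$ instead of rounding to $3/4$ gives $\gamma_0\le\bigl(1+\tfrac{1}{\sqrt2}\bigr)\sqrt{K}(b-a)$ in the limit, i.e.\ $\E\sup T_x^2\le\bigl(\tfrac32+\sqrt2\bigr)K(b-a)^2\approx 2.91\,K(b-a)^2$, strictly better than the stated constant $4$. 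Since the lemma only claims $4$, your rounding is harmless, but it is worth noting that the balanced-bisection chaining you use is not merely matching the lemma's constant — it beats it.
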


Take $T_x=f_{m_n,S}(x)-f_{m_n,S}(0) = f_{m_n,S}(x) - a_0$. Then, just as in~\cite[display immediately preceding Lemma 5.1]{MR1915821}, we have the bound
\begin{equation} \label{eq-expectsqubounds}
\E\big|T_x-T_y\big|^2 = \E\big|f_{m_n,S}(x) - f_{m_n,S}(y)\big|^2 \leq n^3(x-y)^2.
\end{equation}
Using~\eqref{eq-expectsqubounds} and applying Lemma~\ref{moment} two times, once with $a=0,\,b = 1 - m_n^{-1}$ and once with $a = -1 + m_n^{-1}, b = 0$, we find that
\begin{equation} \label{eq-expectbounds}
   \E\left( \sup_{|x| \leq 1 - m_n^{-1}} T_x^2 \right) \leq 8 n^3 \big(1 - m_n^{-1}\big)^2.
\end{equation}
Upon combining~\eqref{eq-q4firstbound} with~\eqref{eq-expectbounds} and applying Markov's inequality multiple times, it follows that for all $n \gg 1$ we have
\begin{equation} \label{eq-q4bound}
\mc{Q}_4' \leq \P\big(|a_0| \geq e^{\sqrt{n}}\big) + \P\left(\sup_{|x| \leq 1 - m_n^{-1}} T_x^2 \geq e^{2\sqrt{n}}\right) \ll e^{-\sqrt{n}}
 + 8n^3 \big(1 - m_n^{-1}\big)^2e^{-2\sqrt{n}} \ll e^{-n^{1/3}}.
\end{equation}

We now turn our attention to bounding $\mc{Q}_3'$. For this, we prove the next lemma, from which one readily deduces that
\begin{equation}
\label{eq-q3bound}
\mc{Q}_3' \gg n^{-O(\epsilon_n)},
\end{equation}
where the implied constants are fixed.
\begin{lemma} \label{lower}
There exists $c<\infty$ such that for all $m=2^{\kappa+1}$, where $\kappa \in \mathbb{N}$ is sufficiently large, we have
\be
\P\Big( g_{m,S}(x) > m^{-2},\, \forall x \in [-1,1];  \text{ and }\,
x f_{m,S}(x) \geq 0,\, \forall x \in \pm [1-2^{-\kappa},1] \Big)
\geq m^{-c}.
\label{i3}
\ee
\end{lemma}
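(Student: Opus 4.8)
(I take the coefficient data to be nice, as in the lower‑bound setting of Theorem~\ref{thm-main}; this hypothesis is necessary, since for the non‑nice data of Example~\ref{eg-probzero} one checks $g_{m,S}(1/2)<0$ with probability $1$, so the lemma fails there.)

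My plan is to prove Lemma~\ref{lower} as the ``fixed‑coefficient'' counterpart of the fully random statement that DPSZ establish en route to bounding $\mc{Q}_1$, which already supplies a bound of the shape $m^{-c}$. Since $S\subseteq\{1,\dots,k\}$ and $s>k$, all the fixed coefficients sit among the \emph{low‑degree} coefficients of $g_{m,S}$, so the natural move is to peel off a bounded‑degree ``head'' carrying the fixed data, control it using niceness, and treat the remaining high‑degree ``tail'' as a genuine fully random polynomial to which the DPSZ lower bound applies. As a preliminary reduction, I would use $f_{m,S}(x)=x^{m-1}g_{m,S}(x^{-1})$ together with $m=2^{\kappa+1}$ (so $(1-2^{-\kappa})^{-1}\le 1+4/m$ for $m\ge 4$) to check that the event in~\eqref{i3} is implied by $\mc{E}\defeq\{\,g_{m,S}(x)>m^{-2}\text{ for all }x\text{ with }|x|\le 1+C/m\,\}$ for a suitable absolute constant $C$; it then suffices to show $\P(\mc{E})\ge m^{-c}$.

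The key structural step is to write $g_{m,S}(x)=A(x)+x^{s}\wt{h}(x)$, where $A$ is the degree‑$\le (s-1)$ truncation of $g_{m,S}$ and $\wt{h}(x)\defeq\sum_{l=0}^{m-1-s}a_{m-1-s-l}x^{l}$. Because $s>k$, every coefficient of $x^{j}$ with $j\ge s$ in $g_{m,S}$ is one of the $a_i$, so $\wt h$ is a \emph{fully random} polynomial of degree $m-1-s$; the $a_i$ occurring in $\wt h$ (indices $\le m-1-s$) are disjoint from those in $A$ (indices in $\{m-s,\dots,m-1\}$), so $A$ and $\wt h$ are independent; and $s$ is \emph{even}, so $x^{s}\ge 0$ on $\R$. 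Next I would use niceness to produce a \emph{seed event}: a positive‑probability event $E_{\mathrm{seed}}$, depending only on the finitely many $a_i$ appearing in $A$, on which $A(x)\ge 4\theta_0$ for all $x\in[-1,1]$ while every such $a_i$ is bounded by a constant $M$, where $\theta_0>0$ is fixed. Here one uses that $A$ has the same law as $g_{s,S}$: niceness says that for some even $s_0>k$ the event $\{f_{s_0,S}(x)\ne 0,\ \forall\,|x|>1\}$ has positive probability, and since $f_{s_0,S}$ has odd degree it has constant sign on $(1,\infty)$ there; invoking $c_1>0$ when $1\in S$ (and a reflection/sign argument when $1\notin S$) reduces to $f_{s_0,S}>0$ on $(1,\infty)$, which via $x\mapsto x^{-1}$ becomes $g_{s_0,S}\ge 0$ on $[-1,1]$ with possible zeros only at $x=\pm1$ (the value at $0$ being the positive leading coefficient of $f_{s_0,S}$). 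Enlarging $s$ and adding a term $\theta^{*}x^{2r}$ with $2r\ge k$ and $\theta^{*}>0$ in $\on{supp}(a_0)$ then removes the boundary zeros without creating new ones, yielding a configuration of the free coefficients of $g_{s,S}$ with $g_{s,S}>0$ on all of $[-1,1]$; its minimum on the compact set $[-1,1]$ is some $4\theta_0>0$. On $E_{\mathrm{seed}}$ the bound $M$ forces $|A'(x)|=O(1)$ for $|x|\le 2$, so $A(x)\ge 2\theta_0$ for all $|x|\le 1+C/m$ once $m$ is large.

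For the tail I would invoke the DPSZ lower‑bound machinery. Their treatment of the fully random case (\cite[\S3]{MR1915821} in the Gaussian case, \cite[\S5]{MR1915821} in general, as used above for $\mc{Q}_1$) shows that for any fixed $\lambda>0$ and all large $N$, a fully random polynomial $h_N$ of degree $N$ satisfies $\P(h_N(x)>\lambda,\ \forall\,|x|\le 1+C/N)\ge N^{-c}$ --- the only changes to their argument being cosmetic: the threshold $N^{-2}$ is replaced by the fixed $\lambda$, and the boundary collar, whose width is $\Theta(1/N)$ in any case, is taken wide enough to reach $1+C/N$. Applying this with $h_N=\wt h$, $N=m-1-s$, and $\lambda=\theta_0/4$, and using $\{\wt h>\theta_0/4\}\subseteq\{\wt h\ge-\theta_0/4\}$, gives that $E_{\mathrm{tail}}\defeq\{\wt h(x)\ge-\theta_0/4,\ \forall\,|x|\le 1+C/m\}$ has probability $\ge m^{-c}$. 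On $E_{\mathrm{seed}}\cap E_{\mathrm{tail}}$: for $|x|\le 1$, since $0\le x^{s}\le 1$, $g_{m,S}(x)=A(x)+x^{s}\wt h(x)\ge 4\theta_0-\theta_0/4>m^{-2}$; for $1\le|x|\le 1+C/m$, since $x^{s}\le 2$ (for $m$ large), $g_{m,S}(x)\ge 2\theta_0-\theta_0/2>m^{-2}$. As $E_{\mathrm{seed}}$ and $E_{\mathrm{tail}}$ are independent, $\P(\mc{E})\ge\P(E_{\mathrm{seed}})\cdot m^{-c}\ge m^{-c'}$, which is the claim.

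\textbf{Main obstacle.} I expect the crux to be the construction of the seed event: turning niceness into a positive‑probability event on which the bounded‑degree head $A$ is uniformly positive on the \emph{closed} interval $[-1,1]$. This is exactly where niceness does its work --- for non‑nice data (as in Example~\ref{eg-probzero}) no finite‑degree truncation of $g_{m,S}$ can be made positive throughout $[-1,1]$ with positive probability, and the lemma fails. Upgrading the open‑interval non‑vanishing that niceness supplies to closed‑interval positivity requires an elementary but somewhat fussy case analysis (the parity of $k$, whether $1\in S$, and whether $0\in\on{supp}(a_0)$). By contrast, the appeal to DPSZ for the fully random tail is routine given familiarity with their proof, modulo the cosmetic adjustments noted above.
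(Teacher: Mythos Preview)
Your proposal is correct and takes a genuinely different route from the paper.

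The paper follows DPSZ's proof of their Lemma~5.2 step by step: it dyadically decomposes $[-1,1]$ into intervals $\mc{J}_0,\mc{J}_s,\ldots,\mc{J}_\kappa$ and correspondingly writes $g_{m,S}=g^0+\sum_{j=s}^{\kappa}x^{2^j}g^j$ (and likewise for $f_{m,S}$). The piece $(f^0,g^0)$ of degree $<2^s$ carries all the fixed coefficients and is controlled via niceness (this is the factor $\eta_{s,\kappa}$), while each of the remaining $\kappa-s+1$ fully random pieces contributes a factor $q_j$ that DPSZ already showed is bounded below uniformly by a positive constant; the product of $O(\log m)$ such factors gives $m^{-c}$. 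Your approach instead uses a coarse two-piece split $g_{m,S}=A+x^{s}\wt h$, handles the bounded-degree head $A$ via niceness (your seed event, which is essentially the paper's treatment of $\eta_{s,\kappa}$), and invokes DPSZ's Lemma~5.2 \emph{itself} for the fully random tail $\wt h$ to obtain $\P(E_{\mathrm{tail}})\ge m^{-c}$. Your preliminary reduction via $f_{m,S}(x)=x^{m-1}g_{m,S}(x^{-1})$ to the single event $\{g_{m,S}>m^{-2}\text{ on }|x|\le 1+C/m\}$ is a clean simplification that the paper does not make.

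The trade-off: your route is more modular and conceptually cleaner, but you cannot quite cite DPSZ's Lemma~5.2 as a literal black box, since $\wt h$ has $m-s$ terms with $s$ fixed, which is never a power of $2$; the ``cosmetic'' adaptation you mention is genuinely needed (and genuinely cosmetic). The paper sidesteps this by reproducing the dyadic argument in full. As for the seed event, your passage from open-interval nonvanishing to closed-interval positivity by tacking on an extra even-degree term is exactly what the paper does (their display~\eqref{eq-tackon}); note that the paper's argument at this step also invokes $c_1>0$, i.e.\ tacitly uses $1\in S$, so the ``fussy case analysis'' you flag for $1\notin S$ is a real detail that both approaches must confront, not a defect of yours alone.
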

\begin{remark}
    The statement and proof of Lemma~\ref{lower} draw heavy inspiration from~\cite[Lemma~5.2]{MR1915821}, except that the roles of $f$ and $g$ are reversed. This distinction matters not in \emph{loc.~cit.}~but is of particular importance in the present article, where we use the niceness condition to prove the lemma.
    \end{remark}
\begin{proof}[Proof of Lemma~\ref{lower}]
Define the intervals
$\mc{J}_{j}\defeq \{ x : 1-2^{-j} \leq |x| \leq 1-2^{-j-1} \}$ for $j \in \{1,\ldots,\kappa-1\}$
and $\mc{J}_{\kappa}=\{ x : 1-2^{-\kappa} \leq |x| \leq 1 \}$. Let $1 \ll s \leq \kappa$, and further define $\mc{J}_0 \defeq \{ x : |x| \leq 1-2^{-s} \}$. We will decompose $f_{m,S}$ and $g_{m,S}$ into sums over
$\kappa-s+2$
terms involving polynomials $f^j$ and $g^j$ of smaller degree, where $j \in \{0,s,s+1,\ldots,\kappa\}$.
Specifically, we
write
$$
f_{m,S}(x) =x^{m-2^s} f^0 (x) + \sum_{j = s}^\kappa  x^{m-2^{j+1}}  f^j(x) \quad \text{and} \quad g_{m,S}(x) =g^0 (x) + \sum_{j=s}^{\kappa} x^{2^j} g^j(x),$$
where the $f^j$ and $g^j$ are completely determined by the conditions $\deg f^0 = 2^{s}-1 = \deg g^0$ and $\deg f^j = 2^j - 1 = \deg g^j$ for each $j \in \{s, \dots, \kappa\}$.

For some sufficiently large $\Gamma_0 > 0$ that does not depend on $\kappa$, we have
\begin{equation}
\label{v22}
 2^{j/2} (\Gamma_0-1) x^{2^j} -
\sum_{\substack{i=s \\ i \neq j}}^{\kappa} 2^{i/2} x^{2^i} \geq 0\; , \quad
\quad \forall x \in \mc{J}_j, \;\; j \in \{s,\ldots,\kappa\}.
\end{equation}
Moreover, since $\E(a_0)=0$ and $\E(a_0^2)=1$, there exist real numbers $\alpha > \beta > 0$ such that $\P(|a_0-\alpha| \leq \beta')>0$ for every $\beta' \in (0,\beta)$. Fixing such an $\alpha$, let $M \in \mathbb{N}$ be an integer to be chosen later, and take $s \gg 1$ to be sufficiently large, so that
\begin{equation}
\label{v3}
\sum_{i=s}^\infty 2^{i/2} x^{2^i} \leq \frac{\alpha}{2M} , \quad \quad \forall x \in \mc{J}_0.
\end{equation}
Note that such an $s$ always exists because, for every $x \in \mc{J}_0$, the sum on the left-hand side of~\eqref{v3} converges.

The sets $\mc{J}_0,\mc{J}_s,\mc{J}_{s+1},
\ldots,\mc{J}_\kappa$ form a partition of the interval $[-1,1]$. One checks that for $\kappa$ large enough,
we have
\begin{equation}
\label{v1}
m^{-2} \leq
\min\bigg\{\frac{\alpha}{2M},\inf_{\substack{x \in \mc{J}_j, \\ j \in \{s,\ldots,\kappa\}}}\, 2^{j/2} x^{2^j}\bigg\}
\end{equation}
Combining~\eqref{v22} with~\eqref{v1}, we deduce that
\begin{align}
\big\{ g_{m,S}(x) > m^{-2},\, \forall x \in [-1,1] \big\} & \supset
\bigcap_{j=s}^\kappa
\big\{ g^j(x) > 2^{j/2}\Gamma_0 ,\, \forall x \in \mc{J}_j; \text{ and } g^j(x) \geq -2^{j/2},\, \forall x \in \mc{J}^c_j \big\} \nonumber \\
&\qquad\qquad\quad \cap
\bigg\{ g^0(x) > {\frac{\alpha}{M}},\, \forall  x \in \mc{J}_0; \text{ and }
 g^0(x) \geq 0,\, \forall x \in \mc{J}^c_0 \bigg\} \label{dec1}
\end{align}
Note that for all $x \in [-1,1]$, we have
\be
\label{dec2}
\big\{ x f_{m,S}(x) \geq 0 \} \supset  \big\{ x f^0(x) \geq -2^{\kappa/2} \big\} \cap
\bigcap_{j=s}^{\kappa} \big\{ x f^j(x) \geq 2^{j/2}\Gamma_0 \big\}.
\ee

The polynomial pairs $(f^j,g^j)$ for $j \in \{0,s,\ldots,\kappa\}$ are mutually
independent of each other, with the pair $(f^j,g^j)$ obeying
the same law as that of $(f_{2^j},g_{2^j})$ for each $j \neq 0$. Consequently, combining~\eqref{dec1} and~\eqref{dec2} yields that
\begin{align}
&\mathbb{P}\big( g_{m,S}(x) > m^{-2},\, \forall x \in [-1,1]; \text{ and }
x f_{m,S}(x) \geq 0,\,  \forall x \in \mc{J}_{\kappa} \big) \geq \eta_{s,\kappa} \times \prod_{j = s}^\kappa q_j, \label{qn1}
\end{align}
where the factors $\eta_{s,\kappa}$ and $q_j$ are defined as follows:
\begin{align*}
\eta_{s,\kappa} & \defeq \mathbb{P}\bigg( g_{2^s,S}(x) > \frac{\alpha}{M},\, \forall x \in \mc{J}_0; \text{ and }
 g_{2^s,S}(x) \geq 0,\, \forall x \in \mc{J}^c_0; \text{ and }
 x f_{2^s,S}(x) \geq -2^{\kappa/2},\, \forall x \in \mc{J}_\kappa \bigg) \nonumber \\
q_j &\defeq 
\mathbb{P}\Big( f_{2^j} (x) > \Gamma_0 2^{j/2},\, \forall x \in \mc{J}_j; \text{ and }
 f_{2^j}(x) \geq -2^{j/2},\, \forall  x \in \mc{J}^c_j; \text{ and }
 x g_{2^j}(x) \geq  \Gamma_0 2^{j/2},\, \forall x \in \mc{J}_{\kappa} \Big)
\end{align*}
That $q_j$ is uniformly bounded away from zero in $\kappa$, independently of $j$, was established in~\cite[Proof of Lemma~5.2]{MR1915821} --- indeed, note that $q_j$ arises from the pieces of the random polynomials $f_{m,S}$ and $g_{m,S}$ that do not involve the fixed coefficients $c_i$ for $i \in S$. It remains to show that $\eta_{s,\kappa}$ is uniformly bounded away from zero in $\kappa$, for all sufficiently large $\kappa$. By our assumption that the coefficient data is nice, there exists an even integer $s' > k$, a set of values $\{v_i : i \in \{0,\dots, s'-1\} \smallsetminus S\} \subset \R$, and a small constant $\ol{\beta} > 0$ such that $\P\big(|a_i - v_i| \leq \ol{\beta}\big) > 0$ for each $i \in \{0,\dots, s'-1\}$ and such that whenever $v_i' \in [v_i - \ol{\beta}, v_i + \ol{\beta}]$ for every $i \in \{0, \dots, s'-1\} \smallsetminus S$, we have 
\begin{equation} \label{eq-recallcond}
\sum_{i \in S} c_i x^{i-1} + \sum_{\substack{i \in \{0, \dots, s'-1\} \\ i \not\in S}} v_i' x^i > 0,\quad \forall x \in (-1,1). 
\end{equation}
Indeed, the niceness assumption guarantees that the left-hand side of~\eqref{eq-recallcond} is nonzero for all $x \in (-1,1)$ and $v_i' \in [v_i - \ol{\beta}, v_i + \ol{\beta}]$, and so the fact that $c_1 > 0$ ensures that it is actually positive. For any $\ell > s'/2$, consider the sum of terms
\begin{equation} \label{eq-tackon}
v_{s'}' x^{s'} + v_{s'+1}'x^{s'+1} + \cdots + v_{2\ell}'x^{2\ell} + v_{2\ell+1}' x^{2\ell+1}.
\end{equation}
If we restrict the coefficients in~\eqref{eq-tackon} to satisfy 
\begin{equation} \label{eq-decreaser}
\alpha + \beta \geq v_{2i}' \geq v_{2i+1}' \geq \alpha - \beta, \quad \text{for every $i \in \{s'/2,\dots,\ell\}$,}
\end{equation}
then the sum in~\eqref{eq-tackon} is nonnegative for all $x \in [-1,1]$. Now, choose $s$ so that $2^s > s'$, and take $\ell = 2^{s-1}-1$. We claim that, if the coefficients $v_i'$ satisfy $v_i' \in [v_i - \ol{\beta}, v_i + \ol{\beta}]$ for every $i \in \{0, \dots, s'-1\} \smallsetminus S$ along with~\eqref{eq-decreaser}, then we have
\begin{equation} \label{eq-moddedcond}
\sum_{i \in S} c_i x^{i-1} + \sum_{\substack{i \in \{0, \dots, 2^s-1\} \\ i \not\in S}} v_i' x^i > \frac{\alpha}{M},\quad \forall x \in [-1 + 2^{-s}, 1 - 2^{-s}]
\end{equation}
for sufficiently small $\beta$ and large $M$. Indeed, there exists some $\xi \in (0,1-2^{-s})$ such that the left-hand side of~\eqref{eq-recallcond} is at least $c_1/2$ for all $x$ such that $|x| \leq \xi$. Then for all $x$ such that $\xi \leq |x| \leq 1 - 2^{-s}$, we have that
\begin{equation}
\sum_{i = s'}^{2^s-1} \alpha x^i = x^{s'} \times \sum_{i = 0}^{2^s - s' - 1} \alpha x^i  \geq \frac{\xi^{s'}}{4} \times \alpha.
\end{equation}
Note that the quantities $c_0/2$ and $\xi^{s'}/4$ are fixed. Thus, if we take $\beta$ to be sufficiently small, then the sum on the left-hand side of~\eqref{eq-moddedcond} is bounded away from zero by some fraction of $\alpha$, proving the claim. Finally, with the coefficients $v_i'$ chosen as above and for $\beta$ sufficiently small, we also have that
\begin{equation} \label{eq-nowthefs}
\left|xf_{2^s}(x)\right| = \left|\sum_{i \in S} c_i x^{2^s-i} + \sum_{\substack{i \in \{0, \dots, 2^s-1\} \\ i \not\in S}} v_i' x^{2^s-1-i}\right| > 2^sC \times \alpha,\quad \forall x \in \mc{J}_k,
\end{equation}
where $C > 0$ depends on the coefficients $c_i,\, v_i$ as well as on $\beta,\, \ol{\beta}$. Taking $\kappa$ to be so large that $2^{\kappa/2} > 2^sC \times \alpha$, we conclude that $\eta_{s,\kappa}$ is uniformly bounded away from zero in $\kappa$.

It remains to explain how the proof of the lemma adapts to the setting of Theorem~\ref{thm-fixedcoeffszero}. Because of our assumption that $k(n) = n^{o(1/\sqrt{\log n})}$, the only aspects of the proof that change are the choice of $s = s(n)$, which must be taken so that $2^{s(n)} \ggg k(n)$, as well as the estimation of the factor $\eta_{s(n),\kappa}$. But because the subleading fixed coefficients $c_2, \dots, c_{k(n)}$ are all equal to zero, it is easy to see that the coefficient data are nice, in the sense that a statement analogous to~\eqref{eq-recallcond} holds. The rest of the proof is the same.
\end{proof}

Combining the bounds on $\mc{Q}_3'$ and $\mc{Q}_4'$ obtained in~\eqref{eq-q3bound} and~\eqref{eq-q4bound}, respectively, we deduce that
\begin{equation} \label{eq-curlq3bound}
    \mc{Q}_3 \geq \mc{Q}_3' - \mc{Q}_4' \geq n^{-o(1)}.
\end{equation}
The lower bound $P_{n,S,\gamma_n} \geq n^{-b+o(1)}$
in Theorem \ref{thm-main}, as well as the lower bound $P_{n,S_n} \geq n^{-b + o(1)}$ in Theorem~\ref{thm-fixedcoeffszero},
then follows by substituting the bounds~\eqref{eq-curlq1bound},~\eqref{eq-curlq2bound}, and~\eqref{eq-curlq3bound} on the factors $\mc{Q}_1$, $\mc{Q}_2$, and $\mc{Q}_3$ into the right-hand side of~\eqref{eq-thesplitters}.

\subsection{Upper bound} \label{sec-uppergens}

We finish by handling the upper bound in
Theorem \ref{thm-main}. Let $\eta_n \defeq \inf\{ \gamma_n(x) : ||x|-1|\leq n^{-{\epsilon_n}}
 \}$, and recall that our assumptions imply that $\eta_n \to 0$. Then, by applying the strong approximation results~\eqref{sapc1} and~\eqref{sapc2} with $m = n$, $t = n^{\epsilon_n/4}$, and $p = p_n$, we deduce that
\begin{align*}
P_{n,S,\gamma_n} &=
\P\big( \hat{f}_{n,S}(x) >\gamma_n(x), \, \forall x \in \R \big) \leq \P\big(
\hat{f}_{n,S}(x) >\eta_n \text{ and } \hat{g}_{n,S}(x) >\eta_n, \, \forall x \in \mc{I} \big)  \\
&\qquad\qquad \leq
\P\big( \hat{f}^b_{n,S}(x) > \eta_n - n^{-{\epsilon_n}/4} \text{ and } \hat{g}_{n,S}^b(x) >\eta_n  -n^{-{\epsilon_n}/4},
\, \forall x \in \mc{I} \big) + 2 n^{-3} \leq n^{-b+o(1)}
\end{align*}
for all $n \gg 1$. The second-to-last inequality follows because $\inf_{x \in \mc{I}} \{\sigma_{n,S}(x),\wt{\sigma}_{n,S}(x)\} \geq n^{\epsilon_n/2}$ for all $n \gg 1$; the last inequality follows by an application of Theorem \ref{thm-maingauss}, taking the threshold to be $\eta_n-n^{-{\epsilon}_n/4}$ when $x \in \mc{I} \cup \mc{I}^{-1}$
and $-\infty$ for all other $x$. This completes the proof of Theorem~\ref{thm-main}.

\section{Proof of Theorem~\ref{thm-locmaingen} and~\ref{thm-fixedcoeffszero2}} \label{sec-thelast}

The purpose of this section is to prove Theorems~\ref{thm-locmaingen} and~\ref{thm-fixedcoeffszero2} (and hence also Theorem~\ref{thm-locmainmon} and Corollary~\ref{cor-algint}). The arguments given in this section are logically independent of the previous two sections, so in particular, we obtain a second proof of Theorem~\ref{thm-mainmon}. We retain notation ($n$, $k$, $S$, and $j$) as in the setting of the theorem.

\subsection{Lower bound}

To obtain the lower bound, we split into cases according to the parity of $n,j$. In \S\ref{sec-lowkodd1}, we treat the case where $n-1,j$ are even, and in \S\ref{sec-lowkeven1}, we handle the case where $n-1,j$ are odd. 

\subsubsection{The case where $n-1,j$ are even} \label{sec-lowkodd1}

Let $n-1,j$ be even. Note that by the assumptions of the theorem, the support of the distribution of the random coefficients intersects both intervals $(-\infty, 0)$ and $(0, \infty)$. Further, there exists a sequence $(p_n)_n$ with $\lim_{n \to \infty} p_n = \infty$ and $\E(|a_0|^{p_n}) \le n$. Set $\rho_n \defeq \max\{5p_n^{-1}, (\log n)^{-1/2}\}$; note that the definitions of $p_n$ and $\rho_n$ are \emph{not} the same as in \S\ref{sec-allthens}. 
Let $M \gg 1$ be such that $\mathbb{P}(|a_i| < M) > 0$. Define the set 
$$\mathcal{V} \defeq \{v \in \R : \mathbb{P}(|a_i-v|<\epsilon) > 0,\, \forall \epsilon > 0\}\subseteq \R,$$ and observe that $\mathcal{V} \neq \varnothing$. We begin with the following result, a somewhat weaker version of which is claimed and proven in~\cite[\S8.1]{MR1915821}. 
\begin{lemma} \label{lem-hconds}
There exist a constant $C_j$ $($depending only on $j$ and the law of the $a_i${}$)$, a small enough constant $\epsilon > 0$, a constant $B > 0$, choices of even $m = m(n) \sim C_j \rho_n \log n$, and a polynomial $B(x) = \sum_{i = 0}^{m - 1} b_i x^i$, with $\lvert b_i \rvert \le B$ and $b_i \in \mathcal{V}$, such that the following holds. Suppose $g(x) = \sum_{i = 0}^{n - 1} g_i x^i$ is any polynomial satisfying the following four properties: 
\smallskip
\\
\indent \emph{\textbf{H1}:} \qquad $\lvert g_i - b_i \rvert < \epsilon$ for $i \in \{0, 1, \dots , m - 1\}$\\
\indent \emph{\textbf{H2}:} \qquad $\lvert g_{n - 1 - i}\rvert \le M$ for $i \in \{0, 1, \dots , m - 1\}$\\
\indent \emph{\textbf{H3}:} \qquad $g_m + g_{m + 1} x + \cdots + g_{n - m - 1} x^{n - 2m - 1} > n^{-1/4} \sigma_{n - 2m}(x)$ for all $x \in \R$\\
\indent \emph{\textbf{H4}:} \qquad $\lvert g_i \rvert < (n-1)^{\rho_{n-1}}$ for $i \in \{0, 1, \dots , n - 1\}$.\\
\smallskip
Then $g(x)$ has exactly $j$ zeros in $[0, 1]$, all of them simple, and is positive on $[-1, 0]$.
\end{lemma}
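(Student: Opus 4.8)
The plan is to follow the strategy of \cite[\S8.1]{MR1915821}, refined so as to yield the exact count, the simplicity of the zeros, and the precise hypotheses \textbf{H1}--\textbf{H4}. Write $g=g^L+g^M+g^H$, where $g^L(x)\defeq\sum_{i=0}^{m-1}g_ix^i$, $g^M(x)\defeq\sum_{i=m}^{n-m-1}g_ix^i=x^mh(x)$ with $h(x)\defeq g_m+g_{m+1}x+\cdots+g_{n-m-1}x^{n-2m-1}$, and $g^H(x)\defeq\sum_{i=n-m}^{n-1}g_ix^i$. Then \textbf{H1} makes $g^L$ a coefficientwise $\epsilon$-perturbation of $B$; \textbf{H2} bounds the coefficients of $g^H$ by $M$; \textbf{H3} says $h(x)>n^{-1/4}\sigma_{n-2m}(x)\geq n^{-1/4}>0$ for all $x$ (using $\sigma_{n-2m}(x)^2=\sum_{\ell=0}^{n-2m-1}x^{2\ell}\geq 1$), so that $g^M$ is nonnegative on $[0,1]$ and --- since $m$ is even --- also on $[-1,0]$; and \textbf{H4} caps every coefficient at $(n-1)^{\rho_{n-1}}$. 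Fix once and for all a compact interval $[a,b]\subset(0,1)$, say $[a,b]=[1/4,3/4]$, put $\eta_n\defeq n^{-9/10}$, and split $[-1,1]$ into the \emph{root region} $[a,b]$, the \emph{bulk} $\big([-1,1]\setminus[a,b]\big)\cap\{|x|\leq 1-\eta_n\}$, and the \emph{edge} $\{1-\eta_n\leq|x|\leq 1\}$.

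\emph{Construction of $B$, and control of the low perturbation.} We need $B(x)=\sum_{i=0}^{m-1}b_ix^i$ with $b_i\in\mathcal{V}$, $|b_i|\leq B$, and the following ``shape'': $B$ has exactly $j$ zeros in $[-1,1]$, all simple and lying in $(a,b)$, with $|B'|\geq\delta_1$ on a fixed neighborhood of each root; $|B|\geq\delta_0$ on the rest of $[-1,1]\setminus(a,b)$; and $B>0$ on $[-1,1]$ except between its roots, with $B$ growing linearly in $m$ on the edge. Here $\delta_0,\delta_1>0$ and $a,b$ depend only on $j$ and the law of the $a_i$. The target profile is $B\approx p_0(x)\cdot q_m(x)$, where $p_0$ is a fixed degree-$j$ polynomial with $j$ simple roots in $(a,b)$, positive on $[-1,a]\cup[b,1]$, and --- because $j\equiv n-1\equiv 0\pmod 2$ --- on $[-1,0]$, while $q_m$ is a positive polynomial of degree $\leq m-1-j$ growing linearly in $m$ (e.g.\ a scaled sum of even powers). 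Since $\E(a_i)=0$ and $\E(a_i^2)=1$, $\mathcal{V}$ contains points of both signs; as in \cite[\S8.1]{MR1915821}, one realizes an honest $B$ with coefficients in $\mathcal{V}$ reproducing this profile, using that $m\to\infty$ supplies enough ``room''. \emph{I expect this step --- producing $B$ with coefficients constrained to $\mathcal{V}$ while retaining quantitative transversality at the roots and the lower bounds elsewhere --- to be the main technical obstacle.} Finally, separating even from odd exponents and using an Abel-summation estimate in the spirit of \eqref{able}, \textbf{H1} gives $|g^L(x)-B(x)|\leq\epsilon\sum_{i<m}|x|^i$ and $|(g^L-B)'(x)|\leq\epsilon(1-|x|)^{-2}$ for $|x|<1$; away from the roots the envelope $\sum_{i<m}|x|^i$ is dominated by $|B(x)|$, so $g^L$ tracks $B$ in value and derivative with room to spare once $\epsilon$ is small.

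\emph{The region argument.} Choose $C_j>1/\log(1/b)$, so that $m\sim C_j\rho_n\log n$ forces $b^m\cdot(n-1)^{\rho_{n-1}}=e^{-c\rho_n\log n}\to 0$ for some $c>0$ (note $\rho_n\log n\geq\sqrt{\log n}\to\infty$). On the \emph{root region} $[a,b]$: since $|x|\leq b$, convergent geometric tails together with \textbf{H4} give $|h(x)|,|h'(x)|,|g^H(x)|,|g^H{}'(x)|\ll(n-1)^{\rho_{n-1}}$, so $g^M,g^M{}',g^H,g^H{}'$ are all $o(1)$ uniformly on $[a,b]$; combined with the bounds on $g^L-B$ and $(g^L-B)'$, transversality of $B$ shows that, for $\epsilon$ small and $n$ large, $g$ has exactly one simple zero in a fixed neighborhood of each root of $B$ and no other zero on $[a,b]$. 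On the \emph{bulk}: $B>0$ with $|B|\geq\delta_0$, so $g^L>0$; moreover $g^M\geq 0$ (on both $[0,1]$ and $[-1,0]$, as $m$ is even and $h>0$) and $|g^H(x)|\leq Mm|x|^{n-m}\leq Mm(1-\eta_n)^{n-m}\to 0$; hence $g>0$ throughout the bulk --- in particular on $[-1,0]$ and $[0,a]$, with no zeros where $|x|\in[b,1-\eta_n]$. On the \emph{edge}: $g^L>0$ (indeed $g^L\gg m$), $g^M(x)=x^mh(x)\geq x^mn^{-1/4}\sigma_{n-2m}(x)$ with $x^m=1+o(1)$, and $\sigma_{n-2m}(x)=\big(\sum_{\ell=0}^{n-2m-1}x^{2\ell}\big)^{1/2}$ is at least a fixed positive power of $n$ once $|x|\geq 1-\eta_n$ (with the choice $\eta_n=n^{-9/10}$ one gets $g^M\gg n^{1/5}$), so $g^M$ overwhelms $|g^H|\leq Mm$; hence $g>0$ on the edge. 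Assembling the three regions: $g>0$ on $[-1,0]$, and on $[0,1]$ its only zeros are the $j$ simple ones in $(a,b)$, which is the claim.

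\emph{On the odd case and the bookkeeping.} The case where $n-1$ and $j$ are odd (\S\ref{sec-lowkeven1}) is identical after replacing $p_0$ by a fixed degree-$(j+1)$ polynomial chosen to match the parity imposed by $n$ even; only the template changes. Besides the construction of $B$, the remaining point requiring care is the mutual consistency of scales: the degree $m\sim C_j\rho_n\log n$, the cap $(n-1)^{\rho_{n-1}}$ from \textbf{H4}, the threshold $n^{-1/4}\sigma_{n-2m}$ from \textbf{H3}, and the edge width $\eta_n$ must be balanced so that $g^M$ and $g^H$ are negligible near the roots and in the bulk yet $g^M$ dominates $g^H$ on the edge; the choices above (with $C_j$ large and $\eta_n=n^{-9/10}$) accomplish this.
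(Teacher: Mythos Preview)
Your overall outline --- decompose $g = g^L + g^M + g^H$, construct $B$ controlling $g^L$, and handle the root/bulk/edge regions separately --- is sound, and the estimates in the region argument are essentially correct. The paper itself gives no independent proof here: it simply cites Steps~1 and~2 of \cite[Lemma~8.2]{MR1915821}, noting only that the weaker \textbf{H2} suffices because just the crude bound $|g^H(x)| \le mM|x|^{n-m}$ is ever invoked.

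The genuine gap is the construction of $B$, which you correctly flag as ``the main technical obstacle'' but then do not resolve. Your template $B \approx p_0 \cdot q_m$, with all $j$ roots lying in a fixed compact interval $[1/4,3/4]$, is \emph{not} what DPSZ do, and your appeal to \cite[\S8.1]{MR1915821} for ``realizing an honest $B$ with coefficients in $\mathcal{V}$ reproducing this profile'' is misplaced. DPSZ build $B$ by concatenating blocks: they fix two short polynomials $Q$, $R$ of degree $s-1$ whose coefficients are chosen from $\{\alpha,\beta\}\subset\mathcal{V}$ (where $\alpha<0<\beta$), and set
\[
B(x) = (1+x^s+\cdots+x^{r_1-s})Q(x) + (x^{r_1}+\cdots+x^{r_2-s})R(x) + (x^{r_2}+\cdots+x^{r_3-s})Q(x) + \cdots
\]
with $r_i \asymp r^i$ for a large constant $r$. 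This block structure guarantees every coefficient of $B$ lies in $\mathcal{V}$ \emph{by construction}, and the sign-change analysis (via $(1-x^s)B(x)$, exactly as carried out in the proof of Lemma~\ref{lem-hconds2}) places the $j$ zeros at multiple scales $x \in [\delta^{1/r_i},(1-\delta)^{1/r_i}]$ accumulating near $1$, not in a fixed compact interval. Your product ansatz $p_0 \cdot q_m$ has no such mechanism: the coefficients of a product are convolutions of the factors' coefficients, and there is no reason they should land in the prescribed set $\mathcal{V}$. Realizing your profile with $b_i\in\mathcal{V}$ would require an entirely new construction, which you have not supplied; the multi-scale block construction of DPSZ is precisely the missing idea.
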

This is proved as \textit{Step 1} and \textit{Step 2} of Lemma 8.2 in DPSZ. Our property \textbf{H2} replaces the stronger property \textbf{A2} in DPSZ, but one sees readily in their proofs of \textit{Step 1} and \textit{Step 2} that in each of the two occurrences where \textbf{A2} is invoked, only the estimate 
\begin{equation} \label{eq-whatsused}
\lvert g_{n - 1} x^{n - 1} + ... + g_{n - m} x^{n - m} \rvert \le m M \lvert x^{n - m} \rvert
\end{equation}
on $\lvert x \rvert \le 1$ is necessary. This estimate follows immediately from \textbf{H2}. 
We now show that with large enough probability, $f_{n, S}$ satisfies simultaneously \textbf{H1}, \textbf{H2}, \textbf{H3}, and \textbf{H4}, which we may call satisfying \textbf{H}. The proof is essentially the same as the proof of Lemma 8.1 in DPSZ, but it is nonetheless short and instructive.  
\begin{lemma} \label{lem-Hprob}
Take $M > \max_{i \in S} |c_i|$. The probability that $f_{n, S}$ satisfies $\mathbf{H}$ is at least $n^{-b + o(1)}$. 
\end{lemma}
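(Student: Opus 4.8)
The plan is to bound below the probability of each of the four events \textbf{H1}--\textbf{H4} for $f_{n,S}$ and then combine them, exploiting the fact that the events concern disjoint blocks of coefficients (hence independent families of $a_i$) wherever possible. Write $f_{n,S}(x) = f^L(x) + f^M(x) + f^H(x)$ in analogy with~\eqref{LMH}, where $f^L$ collects the bottom $m$ coefficients $a_0,\dots,a_{m-1}$, $f^M$ collects the middle coefficients $a_m,\dots,a_{n-m-1}$, and $f^H$ collects the top $m$ coefficients together with the fixed $c_i$'s (using $m = m(n)\sim C_j\rho_n\log n$ as produced by Lemma~\ref{lem-hconds}). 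Events \textbf{H1} and \textbf{H2} depend only on $f^L$ and $f^H$ respectively, and \textbf{H3} depends only on $f^M$; these three blocks are mutually independent. Event \textbf{H4} is a global bound on all the coefficients, but it will follow from a crude tail estimate that costs only an $n^{o(1)}$-type factor, so it can be intersected in at the end.

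First I would handle \textbf{H1}: since each $b_i \in \mc V$, by definition $\P(|a_i - b_i| < \epsilon) > 0$ for every $\epsilon > 0$, so $\P(\textbf{H1}) = \prod_{i=0}^{m-1}\P(|a_i - b_i| < \epsilon) \geq \delta^m$ for some fixed $\delta = \delta(\epsilon) \in (0,1)$; since $m = O(\rho_n \log n) = o(\log n \cdot (\log n)^{-1/2}\cdot\text{const})$... more precisely $m \sim C_j \rho_n \log n$ with $\rho_n \to 0$, we get $\delta^m = e^{-O(m)} = e^{-o(\log n)\cdot\text{?}}$ — here one must check that $m = o(\log n)$, which holds because $\rho_n \to 0$, so $\delta^m = n^{-o(1)}$. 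Next, \textbf{H2}: this asks $|a_{n-1-i}| \leq M$ (and, for the fixed positions $i$ with $n-i \in S$, that $|c_i| \leq M$, which is guaranteed by the hypothesis $M > \max_{i\in S}|c_i|$); since $\P(|a_0| < M) > 0$ by choice of $M$, the same argument gives $\P(\textbf{H2}) \geq \delta'^{\,m} = n^{-o(1)}$. For \textbf{H3}: the polynomial $f^M(x)/\sigma_{n-2m}(x)$, after reindexing, is exactly a normalized fully-random polynomial of degree $n - 2m - 1$, and \textbf{H3} asks it to exceed $n^{-1/4}$ everywhere on $\R$; this is precisely a probability of the type estimated in~\cite{MR1915821} (and subsumed by Theorem~\ref{thm-maingauss}/the lower-bound arguments of \S\ref{sec-low} with constant threshold $\gamma \equiv n^{-1/4} \to 0$), giving $\P(\textbf{H3}) \geq (n-2m)^{-b+o(1)} = n^{-b+o(1)}$, since $m = o(n)$ — indeed $m = o(\log n)$.

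Finally, \textbf{H4} is a union bound: $\P(\exists i : |a_i| \geq (n-1)^{\rho_{n-1}}) \leq n\cdot\P(|a_0| \geq (n-1)^{\rho_{n-1}}) \leq n\cdot \E|a_0|^{p_n}(n-1)^{-\rho_{n-1}p_n} \leq n\cdot n \cdot n^{-\rho_{n-1}p_n}$, and since $\rho_{n-1} \geq 5 p_{n-1}^{-1}$... one wants $\rho_{n-1}p_n$ to beat $2$; with the relation $\rho_n \geq 5p_n^{-1}$ this product is $\geq 5 p_n/p_{n-1} \to 5 > 2$ (one may need to be slightly careful relating $p_n$ to $p_{n-1}$, or simply shrink $\rho_n$ so that $\rho_{n-1}p_n \geq 3$, say). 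Hence $\P(\neg\textbf{H4}) \to 0$, so $\P(\textbf{H4}) \geq 1/2$ for $n \gg 1$. Putting it together: by independence of the $L$, $M$, $H$ blocks,
\[
\P(\textbf{H1}\cap\textbf{H2}\cap\textbf{H3}) = \P(\textbf{H1})\,\P(\textbf{H2})\,\P(\textbf{H3}) \geq n^{-o(1)}\cdot n^{-o(1)}\cdot n^{-b+o(1)} = n^{-b+o(1)},
\]
and then $\P(\textbf{H}) \geq \P(\textbf{H1}\cap\textbf{H2}\cap\textbf{H3}) - \P(\neg\textbf{H4}) $; but this subtraction is not obviously safe since $\P(\neg\textbf{H4})$ might dominate. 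The cleaner route, which I would actually take, is to note \textbf{H4} is implied "for free" on the $L$ and $H$ blocks by \textbf{H1} and \textbf{H2} (those coefficients are bounded by $B + \epsilon$ and $M$ respectively, both $< (n-1)^{\rho_{n-1}}$ eventually), so \textbf{H4} only constrains the middle block; one then intersects the middle-block event in \textbf{H3} with the middle-block part of \textbf{H4}, and shows the conditional probability of the \textbf{H4}-violation given \textbf{H3} is still $o(1)$ (or absorbs it into the $n^{-b+o(1)}$ via a slightly stronger version of the DPSZ estimate that already builds in a polynomial coefficient bound — which is in fact how DPSZ phrase Lemma 8.1).

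The main obstacle I anticipate is precisely this last bookkeeping point: ensuring that the coefficient-size constraint \textbf{H4} can be imposed without destroying the $n^{-b+o(1)}$ lower bound on \textbf{H3}. This is handled in DPSZ by working throughout with the event that already includes a polynomial bound on the coefficients of the middle block (their quantity is defined with such a bound built in), so that \textbf{H3}-type estimates are stated for the truncated process; I would follow that template, citing the relevant estimate from~\cite[\S8]{MR1915821} rather than re-deriving it. Everything else is a routine product of positive-probability local events of length $m = o(\log n)$, each contributing only $n^{-o(1)}$, times one genuinely small factor $n^{-b+o(1)}$ coming from the all-positivity of the fully-random middle block.
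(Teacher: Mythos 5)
Your bounds on $\P(\mathbf{H1})$, $\P(\mathbf{H2})$, and $\P(\mathbf{H3})$ are essentially the ones in the paper (the paper cites \cite[Theorem~1.3]{MR1915821} for $\mathbf{H3}$; your reformulation as ``normalized fully-random polynomial exceeding the threshold $n^{-1/4}$'' is the same event), and your Markov-inequality computation for $\P(\neg\mathbf{H4})$ is on the right track. But there is a real gap at the crucial final step, and it stems from losing track of the exponents you have already established.

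You write that the subtraction $\P(\mathbf{H}) \geq \P(\mathbf{H1}\cap\mathbf{H2}\cap\mathbf{H3}) - \P(\neg\mathbf{H4})$ ``is not obviously safe since $\P(\neg\mathbf{H4})$ might dominate,'' and then propose a more elaborate workaround (absorbing $\mathbf{H4}$ into a truncated version of the $\mathbf{H3}$ estimate). This is unnecessary, and more importantly you should have been able to see it is unnecessary from your own computation: you had already shown $\P(\neg\mathbf{H4}) \ll n^{-c}$ with $c \geq 3$ (the paper gets $O(n^{-3})$ from $\rho_{n-1}p_{n-1} \geq 5$ followed by a union bound over $n$ coefficients), while the target lower bound is $n^{-b+o(1)}$ with $b = 3/4$ (and even the crude DPSZ bound $b \leq 2$ suffices). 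Since $b < 3$, the error term $O(n^{-3})$ is polynomially smaller than the main term $n^{-b+o(1)}$, so the naive subtraction is perfectly safe; this is exactly the observation the paper makes (``together with the fact that $b < 3$, the desired lower bound follows''). The place where your Markov bound got muddled was in letting $\rho_{n-1}p_{n}$ drop to ``$\geq 3$, say'' and then only concluding $\P(\neg\mathbf{H4}) \to 0$, which indeed would not be enough --- you need the full strength of the exponent $5$ built into the definition of $\rho_n$, which gives $\P(\neg\mathbf{H4}) = O(n^{-3})$ after the union bound. Once you keep that, the clean subtraction argument closes the proof, and the alternative route you sketch can be discarded.
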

\begin{proof}
It is easy to see that, because $j = o(\log n)$, we have $\P(\mathbf{H1}) \geq n^{o(1)}$ and $\P(\mathbf{H2}) \geq n^{o(1)}$. By~\cite[Theorem~1.3]{MR1915821} (i.e., the analogue of our Theorem~\ref{thm-locmaingen} for fully random polynomials), we have that $\P(\textbf{H3}) \geq n^{-b + o(1)}$. Because \textbf{H1}, \textbf{H2}, and \textbf{H3} are independent, the probability that they occur together is at least $n^{-b + o(1)}$. As for \textbf{H4} note that by Markov's inequality, we have for each $i$ that $$\P(|a_i| \ge (n-1)^{\rho_{n-1}}) = \P( |a_i|^{p_{n-1}} \ge n^{p_{n-1}\rho_{n-1}}) \ll n^{-5},$$ so \textbf{H4} fails with probability $O(n^{-3})$. Using the fact that $\P(\mathbf{H}) \geq \P(\mathbf{H1}, \mathbf{H2}, \mathbf{H3}) - \P(\text{not }\mathbf{H4})$, together with the fact that $b < 3$, the desired lower bound follows.
\end{proof}

Now, let
$\mathbf{H'}$ be the condition on $g(x) = \sum_{i = 0}^{n - 1} g_i x^i$ that $g_{n - i} = c_i$ for each $i \in S$, that
\textbf{H} also holds, and further that
\begin{equation} \label{eq-smallpolybigforxbig}
x^m \frac{\sigma_{n - 2m}(x)}{n^{1/4}} - \sum_{i = 0}^{m - 1} (B + \epsilon) \lvert x \rvert^i + \sum_{i \in S} c_i x^{n - i} + 
\sum_{\substack{i \in \{1, \dots , m\} \\ i \not\in S}} g_{n - i} x^{n - i}
 > 0, \text{ }
\forall x \text{ such that }\lvert x \rvert \ge 1.
\end{equation}
Observe that the condition $\mathbf{H'}$ may be obtained from the condition $\mathbf{H}$ by keeping $\mathbf{H1}$, $\mathbf{H3}$, and $\mathbf{H4}$ intact and by replacing $\mathbf{H2}$ with the stronger condition that $|g_{n-1-i}| \leq M$ for $i \in \{0,\dots,m-1\}$ and~\eqref{eq-smallpolybigforxbig} holds.
\begin{lemma}
Suppose $g(x) = \sum_{i = 0}^{n - 1} g_i x^i$ satisfies $\mathbf{H'}$. Then $g(x)$ has exactly $j$ real zeros, all of them simple. 
\end{lemma}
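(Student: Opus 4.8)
The plan is to bootstrap off Lemma~\ref{lem-hconds}. Since the condition $\mathbf{H'}$ retains $\mathbf{H1}$, $\mathbf{H3}$, and $\mathbf{H4}$ verbatim and only strengthens $\mathbf{H2}$ (replacing it with the requirement that $|g_{n-1-i}|\le M$ for $i\in\{0,\dots,m-1\}$ together with~\eqref{eq-smallpolybigforxbig}), any $g$ satisfying $\mathbf{H'}$ in particular satisfies $\mathbf{H}$. Hence Lemma~\ref{lem-hconds} already guarantees that $g$ has exactly $j$ zeros in $[0,1]$, all of them simple, and that $g>0$ on $[-1,0]$. It therefore remains only to show that $g(x)>0$ whenever $|x|\ge 1$; granting this, the $j$ zeros produced by Lemma~\ref{lem-hconds} in fact lie in $[0,1)$, $g$ has no further real zeros, and so $g$ has exactly $j$ real zeros, all simple.

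To prove positivity of $g$ on $\{|x|\ge 1\}$, I would decompose $g(x)=\sum_{i=0}^{n-1}g_ix^i$ into three blocks matching the three terms of~\eqref{eq-smallpolybigforxbig}: the low block $\sum_{i=0}^{m-1}g_ix^i$, the middle block $\sum_{i=m}^{n-m-1}g_ix^i$, and the high block $\sum_{i=n-m}^{n-1}g_ix^i$. For the low block, property $\mathbf{H1}$ together with $|b_i|\le B$ gives $|g_i|<B+\epsilon$, so on $\{|x|\ge 1\}$ we have $\bigl|\sum_{i=0}^{m-1}g_ix^i\bigr|\le\sum_{i=0}^{m-1}(B+\epsilon)|x|^i$. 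For the middle block, factor out $x^m$ and apply $\mathbf{H3}$, which asserts $\sum_{i=m}^{n-m-1}g_ix^{i-m}>n^{-1/4}\sigma_{n-2m}(x)$ for all real $x$; since $m$ is even we have $x^m\ge 1$ on $\{|x|\ge 1\}$, so the middle block is at least $x^m\,n^{-1/4}\sigma_{n-2m}(x)$. Finally, reindexing $i\mapsto n-i$ shows the high block equals $\sum_{i\in S}c_ix^{n-i}+\sum_{i\in\{1,\dots,m\}\smallsetminus S}g_{n-i}x^{n-i}$, using the defining requirement $g_{n-i}=c_i$ for $i\in S$ in $\mathbf{H'}$ and the fact that $S\subseteq\{1,\dots,m\}$. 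Summing the three lower bounds and invoking~\eqref{eq-smallpolybigforxbig} directly yields $g(x)>0$ for $|x|\ge 1$.

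The statement is essentially engineered so that $\mathbf{H'}$ (through~\eqref{eq-smallpolybigforxbig}) supplies exactly what is missing from Lemma~\ref{lem-hconds}, so there is no real obstacle; the points needing care are purely bookkeeping. First, one must check that the three-block decomposition of $g$ aligns index-for-index with the three summands of~\eqref{eq-smallpolybigforxbig}. Second, it is the parity of $m$ that makes $x^m\ge 1$ (not merely $x^m\neq 0$) on the negative ray $x\le -1$, which is what lets the $\mathbf{H3}$ estimate and the bound~\eqref{eq-smallpolybigforxbig} be applied there as well; here one also uses that $\sigma_{n-2m}$ is an even function and that $\sigma_{n-2m}(x)>0$, which is immediate from the nondegeneracy of the $a_i$. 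Once these are in place, combining the positivity on $\{|x|\ge 1\}$ with the behavior on $[-1,1]$ furnished by Lemma~\ref{lem-hconds} completes the proof.
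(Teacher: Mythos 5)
Your proof is correct and follows the same strategy as the paper: invoke Lemma~\ref{lem-hconds} via $\mathbf{H'}\Rightarrow\mathbf{H}$ to handle $[-1,1]$, then combine $\mathbf{H1}$, $\mathbf{H3}$, and~\eqref{eq-smallpolybigforxbig} to show $g(x)>0$ for $|x|\ge 1$. The paper states this latter combination in a single sentence without detail; you correctly supply the three-block decomposition and the evenness-of-$m$ sign check that make it go through.
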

\begin{proof}
It only remains to be shown that if $\mathbf{H'}$ holds, then $g(x)$ has no real zeros with $\lvert x \rvert \ge 1$, which holds by combining \eqref{eq-smallpolybigforxbig}, \textbf{H1}, and \textbf{H3}.
\end{proof}
\begin{lemma}
The probability that $f_{n, S}$ satisfies $\mathbf{H'}$ is at least $n^{-b + o(1)}$.
\end{lemma}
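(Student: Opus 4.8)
The plan is to reuse the three–chunk decomposition underlying the estimates on $\mc{Q}_1,\mc{Q}_2,\mc{Q}_3$ in \S\ref{sec-pfgencoffs}, but with splitting parameter $m=m(n)\sim C_j\rho_n\log n$: write $f_{n,S}=f_{n,S}^L+f_{n,S}^M+f_{n,S}^H$, where $f^L$ collects the monomials of degree $<m$, $f^M$ those of degree in $\{m,\dots,n-m-1\}$, and $f^H$ those of degree $\geq n-m$; since $k<m$, the chunk $f^H$ contains all of the fixed terms $c_ix^{n-i}$. The three chunks are independent. As recorded just above the lemma, $\mathbf{H'}$ is $\mathbf{H}\wedge\eqref{eq-smallpolybigforxbig}$; moreover $\mathbf{H1}$, $\mathbf{H3}$, $\mathbf{H2}$ are conditions on the coefficients of $f^L$, $f^M$, $f^H$ respectively, $\mathbf{H4}$ is a union of three events, one per chunk, and — the key structural point — the inequality~\eqref{eq-smallpolybigforxbig} involves only the coefficients of $f^H$ together with the deterministic quantities $\sigma_{n-2m}$, $B$, $\epsilon$, and $c_i$. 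Hence, by independence,
$$\P\big(f_{n,S}\text{ satisfies }\mathbf{H'}\big)\ \geq\ \P(\mathbf{H1})\,\P(\mathbf{H3})\,\P\big(\mathbf{H2}\wedge\eqref{eq-smallpolybigforxbig}\big)\ -\ \P(\text{not }\mathbf{H4}),$$
and the proof of Lemma~\ref{lem-Hprob} already gives $\P(\mathbf{H1})\geq n^{o(1)}$, $\P(\mathbf{H3})\geq n^{-b+o(1)}$, and $\P(\text{not }\mathbf{H4})=O(n^{-3})$. As $b<3$, everything reduces to proving $\P\big(\mathbf{H2}\wedge\eqref{eq-smallpolybigforxbig}\big)\geq n^{-o(1)}$.

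To prove that bound, I would construct an event $\mc{E}$, depending only on the $m$ free coefficients $a_{n-i}$ ($i\in\{1,\dots,m\}\smallsetminus S$), with $\P(\mc{E})\geq n^{-o(1)}$, on which both $\mathbf{H2}$ and~\eqref{eq-smallpolybigforxbig} hold; the construction is in the spirit of the proof of Lemma~\ref{lower}. Write $f^H(x)=x^{n-m}P(x)$ with $\deg P=m-1$ and the top $k$ coefficients of $P$ prescribed by the $c_i$, and split $P(x)=x^{m-s}P_s(x)+Q(x)$ with $\deg P_s=s-1$, $\deg Q=m-s-1$, where $s>k$ is the even integer furnished by the niceness hypothesis; note $P_s$ has the same law as $f_{s,S}$. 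On $\mc{E}$ I would first restrict the free coefficients of $P_s$ to a positive–probability box on which $P_s$ has all its real zeros in the open interval $(-1,1)$ and has positive leading coefficient — such a box exists by niceness after discarding the measure–zero locus on which a zero sits at $\pm1$, and the leading coefficient is $c_1>0$ automatically when $1\in S$ — so that, $\deg P_s=s-1$ being odd, $P_s>0$ on $(1,\infty)$ and $P_s\leq 0$ on $(-\infty,-1]$ throughout the box. Second, I restrict $Q$'s coefficients to a \emph{paired} box: since $\E(a_0)=0$ and $\on{Var}(a_0)=1$ force the support of $a_0$ to meet $(0,\infty)$, one may choose intervals $I^{\mathrm{ev}},I^{\mathrm{odd}}\subset(0,\infty)$ with $\sup I^{\mathrm{ev}}\leq\inf I^{\mathrm{odd}}$ and $\P(a_0\in I^{\mathrm{ev}}),\P(a_0\in I^{\mathrm{odd}})>0$ (allowing $I^{\mathrm{ev}}=I^{\mathrm{odd}}=\{v\}$ for a positive atom $v$; two distinct positive support points otherwise), and then ask the even–index coefficients of $Q$ to lie in $I^{\mathrm{ev}}$ and the odd–index ones in $I^{\mathrm{odd}}$. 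A short computation with geometric sums — exactly as in the manipulation of~\eqref{eq-tackon}--\eqref{eq-decreaser} — shows that on this box $Q>0$ on $(0,\infty)$, $Q<0$ on $(-\infty,-1)$, and $Q(-1)\leq 0$. Consequently $P=x^{m-s}P_s+Q$ is $>0$ on $[1,\infty)$ (both summands are, with $Q$ strictly positive) and $<0$ on $(-\infty,-1)$ (both summands are, with $Q$ strictly negative), with $P(\pm1)$ of the matching sign; since $n-m$ is odd it follows that $f^H(x)=x^{n-m}P(x)>0$ for all $|x|>1$ and $f^H(\pm1)\geq 0$. Finally, $\mc{E}$ confines each $a_{n-i}$ to a fixed bounded interval, so taking $M$ large enough that these intervals and all $|c_i|$ are $\leq M$ makes $\mathbf{H2}$ hold on $\mc{E}$, and $\P(\mc{E})$ is a product of one positive–probability box factor with $m-s=O(\rho_n\log n)$ further factors each bounded below by a fixed positive constant, whence $\P(\mc{E})=e^{-O(m)}=n^{-o(1)}$.

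It then remains to verify~\eqref{eq-smallpolybigforxbig} on $\mc{E}$. Since $f^H\geq 0$ on $\{|x|\geq1\}$, and $x^m\geq1$, $\sigma_{n-2m}(x)\geq\sqrt{n-2m}$ there, the left side of~\eqref{eq-smallpolybigforxbig} is at least $\sqrt{n-2m}\,n^{-1/4}-\sum_{i=0}^{m-1}(B+\epsilon)|x|^i$. For $|x|$ in a fixed compact band $[1,R]$ (here $R$ includes $|x|=1$) the subtracted sum is at most $(B+\epsilon)\,\tfrac{R^{m}}{R-1}=n^{o(1)}$, using $\rho_n\to 0$, so the left side is at least $\tfrac12 n^{1/4}-n^{o(1)}>0$ for $n\gg1$; for $|x|\geq R$ (with $R$ a suitable absolute constant) the leading term of $f^H$, of order $|x|^{n-1}$, eventually dominates the subtracted sum, which is $O(|x|^{m})$, because $n-1>m$ forces $|x|^{n-1-m}\to\infty$. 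Either way the left side of~\eqref{eq-smallpolybigforxbig} is positive for all $n\gg1$, so~\eqref{eq-smallpolybigforxbig} holds on $\mc{E}$, completing the proof.

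The hard part is the tension between two requirements: $f^H$ must be positive on \emph{all} of $\{|x|\geq1\}$, yet the coefficient restrictions cutting out $\mc{E}$ must have \emph{constant} width, since shrinking–width boxes would make $\P(\mc{E})$ decay like $n^{-\Theta(1)}$ rather than $n^{-o(1)}$. The critical range is $|x|$ just above $1$: there the large term $x^m\sigma_{n-2m}(x)/n^{1/4}\sim n^{1/4}$ is powerless against $f^H$, which is exponentially large whenever $|x|-1$ is a positive constant, so the positivity of $f^H$ near $x=-1$ has to come entirely from the construction; a naive perturbation around a single value leaves a constant–width band near $x=-1$ where the sign of $Q$ (hence of $f^H$) is indeterminate. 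The paired choice of $Q$'s low coefficients — even–index below odd–index, all positive — is precisely what removes this band, forcing $Q<0$ on all of $(-\infty,-1)$ robustly under constant–width perturbation, while niceness supplies the matching sign behavior of the top block $P_s$. This interplay is the only place the niceness hypothesis enters, and is the main new ingredient relative to~\cite{MR1915821}.
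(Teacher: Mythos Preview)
Your approach is correct and follows essentially the same route as the paper's, but you work harder than necessary. The paper's proof rests on one clean observation you missed: since $m$ is even and $\sigma_{n-2m}(x)\geq\sqrt{n-2m}$ for $|x|\geq 1$, one has
\[
x^m\,\frac{\sigma_{n-2m}(x)}{n^{1/4}}\;-\;\sum_{i=0}^{m-1}(B+\epsilon)|x|^i \;\geq\; |x|^{m-1}\!\left(\frac{\sqrt{n-2m}}{n^{1/4}}-m(B+\epsilon)\right)\;>\;0
\]
for \emph{all} $|x|\geq 1$ once $n\gg 1$, because $m=o(\log n)$. This dispenses with your range-splitting into $[1,R]$ and $[R,\infty)$: the first two terms of~\eqref{eq-smallpolybigforxbig} are already positive on the whole region, so it suffices that the high block $f^H$ be nonnegative there --- precisely what your event $\mc{E}$ delivers. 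Your explicit paired-box construction for $Q$ is a close variant of the argument around~\eqref{eq-tackon}--\eqref{eq-decreaser} in the proof of Lemma~\ref{lower}; the paper simply cites that lemma rather than rederiving the construction.

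One small caveat: when $1\notin S$, niceness by itself does not force the leading coefficient of $P_s$ to be positive, so ``such a box exists by niceness'' is not quite justified as stated; an extra sentence is needed to intersect with $\{a_{n-1}>0\}$ or to flip the sign of the $Q$-construction accordingly. (The paper's one-line appeal to Lemma~\ref{lower} is equally terse on this point.)
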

\begin{proof}
By the proof of Lemma \ref{lem-Hprob}, it suffices to show that $f_{n, S}$ satisfies \eqref{eq-smallpolybigforxbig} with probability at least $n^{o(1)}$. For $\lvert x \rvert \ge 1$, we have $\sigma_{n - 2m}(x) \ge (n - 2m)^{1/2}$. Consequently, for all $n \gg 1$, we have
\[
x^m n^{-1/4} \sigma_{n - 2m}(x) - \sum_{i = 0}^{m - 1} (B + \epsilon) \lvert x \rvert^i > 0
\]
for all $x$ such that $\lvert x \rvert \ge 1$. It now suffices to prove that 
\[
\sum_{i \in S} c_i x^{m - i} + 
% \sum_{i \in \{1, ... , k\} \backslash S} a_i x^{m - i} + \sum_{i = k}^{m - 1} g_{n - 1 - i} 
\sum_{\substack{i \in \{1, ... , m\} \\ i \not\in S}} a_{m - i} x^{m - i}\neq 0
\]
for $x$ such that $|x| > 1$ with probability at least $n^{o(1)}$, but this follows from the niceness condition, just as in the proof of Lemma~\ref{lower}.
\end{proof}
 In the context of Theorem~\ref{thm-fixedcoeffszero2}, it is also clear that if $k = k(n)$ grows as slowly as $o(\log n)$ that the above estimates continue to hold --- Lemma~\ref{lem-hconds} needs to be modified to permit fixing more than $m$ coefficients at the top of the polynomial, but this is possible because the behavior of the polynomial on $[-1,1]$ is determined almost entirely by the last $m$ coefficients.

\subsubsection{The case where $n-1,j$ are odd} \label{sec-lowkeven1}

Let $n-1,j$ be odd. Recall that in \S\ref{sec-lowkeven1}, our idea was to adapt the proof given in~\cite[\S8.1]{MR1915821} to work in our setting. This was possible because the proof in \emph{loc.~cit.}~involves choosing very tiny ranges for the low-degree coefficients and only requires that the high-degree coefficients be bounded. But when $n-1,j$ are odd, the proof given in~\cite[\S8.2]{MR1915821} does not similarly adapt to our setting: their argument involves choosing tiny ranges for \emph{both} the low- \emph{and} high-degree coefficients of the polynomial, which prevents us from fixing the high-degree coefficients to have the desired values $c_i$. Thus, to handle the case where $n-1,j$ are odd, we must prove a new analogue of Lemma~\ref{lem-hconds} that allows us to control the number of zeros using only the low-degree coefficients.

Since the $a_i$ are of zero mean and unit variance, there exist $\alpha < 0 < \beta$ such that $\alpha,\beta \in \mc{V}$. We may assume without loss of generality that $-\alpha > \beta$. Let $s \geq 4$ be an even integer such that $\alpha + (s-1)\beta < 0$, and define polynomials
$$Q(x) \defeq -\beta x^{s-1} - \sum_{i = 0}^{s-2} \alpha x^i,\qquad R(x) = -\alpha - \sum_{i = 1}^{s-1} \beta x^i.$$
Note that by construction we have $Q(x) < 0$ for all $x$ such that $|x| \leq 1$ and $R(1) > 0 > R(-1)$. 

Let $\delta > 0$ be sufficiently small, let $r = r(\delta)$ be sufficiently large, and let $\epsilon = \epsilon(r,\delta)$ be sufficiently small; we will choose the values of these quantities, all of which are independent of $n$, later. Let $r_i$ denote the multiple of $s$ nearest to $r^i$, for each $i \in \{1,\dots, j\}$. Let $m = m(n)$ be the multiple of $s$ nearest to $2r_k\rho_n \log n/|\log(1-\delta)|$. Define the polynomial
\begin{align*}
    & B(x) = \sum_{i = 0}^{m} b_ix^i \defeq (1 + x^s + x^{2s} + \cdots + x^{r_1-s})Q(x) + (x^{r_1} + x^{r_1 + s} + \cdots + x^{r_2 -s})R(x) + \\[-9pt]
    &\qquad\qquad\qquad\qquad\qquad (x^{r_2} + x^{r_2+s}  + \cdots + x^{r_3-s})Q(x) + (x^{r_3} + x^{r_3 + s} + \cdots + x^{r_4 -s})R(x) + \cdots + \\
    &\qquad\qquad\qquad\qquad\qquad\qquad (x^{r_{j-1}} + x^{r_{j-1}+s} + \cdots + x^{r_j-s})Q(x) + (x^{r_j} + x^{r_j + s} + \cdots + x^{m -s})R(x) + \alpha x^m. 
\end{align*}
Then we have the following lemma (which also applies in the setting of~\cite[\S8.2]{MR1915821} and gives a different proof of their result for fully random polynomials):
\begin{lemma} \label{lem-hconds2}
For suitable choices of $\delta$, $r$, and $\epsilon$, suppose that $g(x) = \sum_{i = 0}^{n - 1} g_i x^i$ is any polynomial satisfying the following four properties: 
\smallskip
\\
\indent \emph{\textbf{K1}:} \qquad $\lvert g_i - b_i \rvert < \epsilon$ for $i \in \{0, 1, \dots , m\}$\\
\indent \emph{\textbf{K2}:} \qquad $\lvert g_{n - 1 - i}\rvert \le M$ for $i \in \{0, 1, \dots , m - 1\}$\\
\indent \emph{\textbf{K3}:} \qquad $g_{m+1} + g_{m + 2} x + \cdots + g_{n - m - 1} x^{n - 2m - 2} > n^{-1/4} \sigma_{n - 2m-2}(x)$ for all $x \in \R$\\
\indent \emph{\textbf{K4}:} \qquad $\lvert g_i \rvert < (n-1)^{\rho_{n-1}}$ for $i \in \{0, 1, \dots , n - 1\}$.\\
\medskip
Then $g(x)$ has exactly $j$ zeros in $[0, 1]$, all of them simple, and is negative on $[-1, 0]$.
\end{lemma}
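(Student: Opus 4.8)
The plan is to follow the structure of the proof of Lemma~\ref{lem-hconds} (i.e., the argument for the even case in \S\ref{sec-lowkodd1}, which itself adapts DPSZ's \textit{Step 1} and \textit{Step 2} of Lemma 8.2), but with the polynomial $B(x)$ replacing the monotone-coefficient test polynomial used there. The key point is that $B(x)$ is engineered so that, on $[-1,1]$, its graph crosses zero exactly $j$ times with the right signs, and these crossings are robust under perturbations of size $\epsilon$ in each coefficient and under the addition of the high-degree tail. First I would establish the behavior of $B(x)$ itself: on the dyadic-type windows $|x| \in [1 - \delta^{-(i-1)}\cdot(\text{something}), \dots]$ corresponding to the blocks $(x^{r_{i-1}} + \cdots + x^{r_i - s})$, exactly one of the factors $Q$ or $R$ dominates the sum of all other blocks, because the geometric series $x^{r_{i-1}} + x^{r_{i-1}+s} + \cdots$ has magnitude $\asymp (1-|x|^s)^{-1}$ on its own window while the tails from other blocks are geometrically smaller (this is where $r$ large and $\delta$ small enter). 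Since $Q(x) < 0$ on $[-1,1]$ and $R(1) > 0 > R(-1)$, one checks that $B$ alternates sign across consecutive windows on $[0,1]$, producing exactly $j$ sign changes and hence (after a derivative/monotonicity check on each window, again using that one block dominates) exactly $j$ simple zeros in $[0,1]$; meanwhile on $[-1,0]$ every $R$-block contributes negatively (since $R(x) < 0$ for $x$ near $-1$ and the relevant powers are odd) and every $Q$-block is negative, so $B < 0$ there. The parity bookkeeping ($s$ even, $n-1$ and $j$ odd, the leading $\alpha x^m$ term) is exactly what forces an odd number of crossings and the negativity on $[-1,0]$ rather than positivity.

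Next I would transfer this to $g(x)$. Write $g(x) = \big(\sum_{i=0}^{m} g_i x^i\big) + \big(\sum_{i=m+1}^{n-m-1} g_i x^i\big) + \big(\sum_{i=n-m}^{n-1} g_i x^i\big)$. On $|x| \le 1$: by \textbf{K1} the low part is within $\sum_{i=0}^m \epsilon |x|^i \le \epsilon m$ of $B(x)$, so for $\epsilon$ small (relative to the fixed gap by which $|B|$ is bounded below away from its zeros, on the complement of small neighborhoods of those zeros) the low part has the same sign pattern and the same $j$ simple zeros; by \textbf{K2} the high part is bounded by $mM|x|^{n-m}$ via the estimate \eqref{eq-whatsused}, which is superexponentially small on $|x| \le 1 - m_n^{-1}$ and in any case $o(1)$ uniformly, hence negligible against the fixed lower bound on $|B|$ away from its zeros; and by \textbf{K3} the middle part is positive and larger than $n^{-1/4}\sigma_{n-2m-2}(x)$, which near the zeros of $B$ is what controls the sign and guarantees the zeros of $g$ are simple and don't split or merge. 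Summing these three estimates shows $g$ has exactly $j$ simple zeros in $[0,1]$ and is negative on $[-1,0]$, exactly as in the even case. The role of \textbf{K4} is only to feed into the probability estimate in the companion lemma (the analogue of Lemma~\ref{lem-Hprob}), not into this deterministic statement, but I would keep it in the hypothesis list for parallelism with Lemma~\ref{lem-hconds}.

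The main obstacle I expect is the quantitative separation argument for $B$ on its windows: one must choose $\delta$, then $r$ (depending on $\delta$), then $\epsilon$ (depending on both) in the correct order so that (i) on the $i$-th window the block $x^{r_{i-1}} + \cdots + x^{r_i - s}$ genuinely dominates the sum of all other blocks plus the $\alpha x^m$ term with a fixed multiplicative margin, (ii) $|B|$ is bounded below by a fixed constant on each window outside fixed-radius neighborhoods of its (exactly one) zero there, and (iii) within those neighborhoods $B'$ has a fixed-sign lower bound so the zero is simple and survives perturbation. This is the analogue of estimates \eqref{v22}--\eqref{v1} in the proof of Lemma~\ref{lower} and of the window analysis in DPSZ \S8.2, and it is purely a matter of carefully ordering the choice of constants and bounding geometric series; I would state the needed inequalities as a short sublemma about $B(x)$ and verify them by the geometric-series estimates sketched above, then invoke it. Everything else — the triangle-inequality splitting of $g$, the use of \eqref{eq-whatsused} on $|x|\le 1$, and the handling of the middle block via \textbf{K3} — is identical in form to the even case and to DPSZ, so I would treat it briefly. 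Finally, I would remark (as the paper does after Lemma~\ref{lem-hconds2} is used) that the companion probability bound, that $f_{n,S}$ satisfies \textbf{K1}--\textbf{K4} with probability $n^{-b+o(1)}$, follows exactly as in Lemma~\ref{lem-Hprob}: \textbf{K1} and \textbf{K2} cost only $n^{o(1)}$ since $m = O(\rho_n \log n) = o(\log n \cdot \text{slowly growing})$... more precisely $m \lll (\log n)$ up to the $\rho_n$ factor so the probability is $n^{-o(1)}$; \textbf{K3} costs $n^{-b+o(1)}$ by~\cite[Theorem~1.3]{MR1915821}; \textbf{K4} fails with probability $O(n^{-3})$ by Markov as before; and independence of the three coefficient blocks closes the argument.
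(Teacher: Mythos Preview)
Your overall architecture is right and matches the paper's, but there is one genuine gap: your claim that \textbf{K4} ``is only to feed into the probability estimate \ldots\ not into this deterministic statement'' is wrong, and without \textbf{K4} your argument breaks. On the sub-intervals of $[0,1]$ where $B(x)<0$ (for example near $x=0$, where the $Q$-block dominates), you must show $g(x)<0$ as well. But \textbf{K3} only gives a \emph{lower} bound on the middle block $g_{m+1}x^{m+1}+\cdots+g_{n-m-1}x^{n-m-1}$; it says nothing about how large and positive this block can be. The paper uses \textbf{K4} exactly here: with $|g_i|<(n-1)^{\rho_{n-1}}$ one gets $\big|g(x)-B(x)\big|\le \epsilon\sum_{i\le m}|x|^i + n^{\rho_n}\sum_{i>m}|x|^i$ on $|x|\le (1-\delta)^{1/r_j}$, and the choice $m\sim 2r_j\rho_n\log n/|\log(1-\delta)|$ is precisely what makes $n^{\rho_n}|x|^{m+1}\to 0$ there. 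Your splitting bounds the low and high parts correctly, but leaves the middle part uncontrolled from above; this is not fixable without \textbf{K4} (or some substitute absolute bound on the middle coefficients).

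A secondary point: your mechanism for getting \emph{exactly} $j$ simple zeros is too vague. Saying \textbf{K3} ``controls the sign near the zeros of $B$'' does not prevent extra zeros in the transition windows. The paper handles this by passing to $F(x)=(1-x^s)g(x)$, which telescopes $(1-x^s)B(x)$ into the closed form $Q(x)+\sum_\ell(-1)^\ell(Q-R)(x)x^{r_\ell}-R(x)x^m+\alpha x^m(1-x^s)$, and then proves $F'$ has a definite sign on each transition interval $[\delta^{1/r_i},(1-\delta)^{1/r_i}]$ by showing the single term $(-1)^i(Q-R)(x)\,r_i x^{r_i-1}$ dominates. You should either adopt this $(1-x^s)$ trick or supply an equivalent derivative estimate; the ``one block dominates'' heuristic alone does not rule out multiple crossings within a window.
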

\begin{proof}
    The proof of the lemma follows the argument given in~\cite[\S8.1]{MR1915821} for the proof of Lemma~\ref{lem-hconds}, and it is likewise divided into three steps. The first step is to prove that $g$ has the desired behavior on $[0,1]$; the second step is to do the same for $[-1,0]$; and the third step is to prove the same for $\R \smallsetminus [-1,1]$. The third step is entirely identical to that of the proof of Lemma~\ref{lem-hconds}, so we omit it and focus on the first two steps.

    \smallskip \noindent{\emph{Step 1}}. The zeros of $g(x)$ in $(0,1)$ are the same as those of $F(x) \defeq (1 - x^s)g(x)$, so it suffices to prove the following conditions, which ensure that $g(x)$ has exactly $j$ zeros on $[0,1]$:
    \begin{itemize}
        \item $F(x) < 0$ for $x \in [0,\delta^{1/r_1}]$
        \item $F'(x) > 0$ for $x \in [\delta^{1/r_1},(1-\delta)^{1/r_1}]$
        \item $F(x) > 0$ for $x \in [(1-\delta)^{1/r_1},\delta^{1/r_2}]$
        \item $F'(x) < 0$ for $x \in [\delta^{1/r_2},(1-\delta)^{1/r_2}]$
        \item $F(x) < 0$ for $x \in [(1-\delta)^{1/r_2},\delta^{1/r_3}]$
        \item $F'(x) > 0$ for $x \in [\delta^{1/r_3},(1-\delta)^{1/r_3}]$\\
        \hspace*{75pt}$\vdots$
        \item $F'(x) > 0$ for $x \in [\delta^{1/r_j},(1-\delta)^{1/r_j}]$
        \item $F(x) > 0$ for $x \in [(1-\delta)^{1/r_j},2^{-1/m}]$
        \item $g(x) > 0$ for $x \in [2^{-1/m},1]$
    \end{itemize}
    To proceed, first note that for all $x$ such that $|x| \leq (1-\delta)^{1/r_k}$, the polynomial $F$ is approximated very well by the polynomial $(1-x^s)B(x)$. Indeed, by conditions $\mathbf{K1}$ and $\mathbf{K4}$, we have the following bound for all $x$ such that $|x| \leq (1-\delta)^{1/r_k}$ and all $n \gg 1$:
    \begin{align} \label{eq-wellapprox1}
        \big|F(x) - (1-x^s)B(x)\big| & \leq (1-x^s) \bigg(\epsilon(1 + |x| + \cdots + |x|^{m-1}) + n^{\rho_n}(|x|^{m+1} + \cdots + |x|^n\bigg) \\
        & \leq (1-|x|)^{-1}(\epsilon + n^{\rho_n}x^{m+1}) \ll \epsilon, \nonumber
    \end{align}
    where the implied constant depends on $r$ and $\delta$. We are thus led to consider the polynomial $(1-x^s)B(x)$, which can be expanded as follows:
    \begin{equation} \label{eq-expandB}
        (1-x^s)B(x) = Q(x) + \bigg(\sum_{\ell = 1}^j(-1)^\ell\big(Q(x) - R(x)\big)x^{r_\ell}\bigg) - R(x)x^m + \alpha x^m(1-x^s).
    \end{equation}
    We start by verifying that the signs of $F(x)$ are correct on the relevant intervals. Fix $N$ such that $\sup_{x \in [0,1]} \max\{|Q(x)|,|R(x)|\} \leq N$, and take $x \in [0,\delta^{1/r_1}]$. Then the factors $x^{r_\ell}$, $x^m$, and $x^{m-s}$ occurring in~\eqref{eq-expandB} are all at most $\delta$ in size, so we have
    \begin{equation} \label{eq-boundbbelow1}
        (1-x^s)B(x) \leq Q(x) + (2j+3)N\delta.
    \end{equation}
Thus, for all sufficiently small $\delta$, the negativity of $Q(x)$ on $[0,1]$ implies in conjunction with~\eqref{eq-boundbbelow1} that $(1-x^s)B(x)$ is negative and bounded away from zero for all $x \in [0,\delta^{1/r_1}]$. Using~\eqref{eq-wellapprox1}, for all sufficiently small $\epsilon$ we conclude that $F(x)$ is negative on this interval.

Next, take $x \in [(1-\delta)^{1/r_i},\delta^{1/r_{i+1}}]$ for some $i \in \{1,\dots, j-1\}$, and note that we have $x^m \leq x^{r_\ell} \leq \delta$ for all $\ell > i$ and $x^{r_\ell} \in [1-\delta,1]$ for all $\ell \leq i$. Observe that we have the identity
\begin{equation} \label{eq-qrindicator}
    Q(x) + \sum_{\ell = 1}^i(-1)^\ell\big(Q(x) - R(x)\big) = QR(x) \defeq \begin{cases} Q(x), & \text{if $i$ is even,} \\ R(x), & \text{if $i$ is odd.}
    \end{cases}
\end{equation}
Combining~\eqref{eq-qrindicator} with~\eqref{eq-expandB}, it follows that for all $x \in [(1-\delta)^{1/r_i},\delta^{1/r_{i+1}}]$  we have
\begin{equation} \label{eq-bcompqr}
    \big|(1-x^s)B(x) - QR(x)\big| \leq (2j+3)N\delta.
\end{equation}
Thus, for all sufficiently small $\delta$, the polynomial $(1-x^s)B(x)$ is as close as we like to $QR(x)$, which is negative if $i$ is even, and which is positive if $i$ is odd and $r$ is sufficiently large. That we have the desired behavior on this interval of $x$ then follows from~\eqref{eq-wellapprox1} by taking $\epsilon$ to be sufficiently small.

Next, take $x \in [(1-\delta)^{1/r_j},2^{-1/m}]$ --- note that on this interval, the bound~\eqref{eq-wellapprox1} no longer applies, but it follows from conditions $\mathbf{K1}$, $\mathbf{K2}$, and $\mathbf{K3}$ that for all $n \gg 1$,
\begin{equation} \label{eq-applyK3}
    F(x) - (1-x^s)B(x) \geq -(1-x^s)\big(\epsilon(1 + |x| + \cdots + |x|^{m}) + mM|x|^{n-m}\big) \gg -\epsilon,
\end{equation}
where the implied constant is positive and depends on $r$ and $\delta$.
Then, from~\eqref{eq-expandB} and~\eqref{eq-bcompqr}, taking $i = j$, we deduce that
\begin{equation} \label{eq-thisthetightbound}
    (1-x^s)B(x) \geq \frac{1}{2}R(x) - 2jN\delta.
\end{equation}
Thus, for all sufficiently small $\delta$, the polynomial $(1-x^s)B(x)$ is bounded away from zero, and the desired behavior on this interval of $x$ then follows from~\eqref{eq-applyK3} by taking $\epsilon$ to be sufficiently small.

    Next, take $x \in [2^{-1/m},\delta^{1/n}]$. Applying conditions $\mathbf{K1}$, $\mathbf{K2}$, and $\mathbf{K3}$ to control the three ranges of terms in the polynomial $g$ yields the following lower bound:
    \begin{align}
        g(x) & \geq B(x) - \epsilon (1 + \cdots + |x|^m) - mMx^{n-m} \geq (x^{r_j} + x^{r_j + s} + \cdots + x^{m-s})R(x) - r_jN - \epsilon m - mM\delta^{1/2} \nonumber \\
        & \geq \left(\frac{m-r_j}{2s}\right)R(x) - r_jN - \epsilon m - mM\delta^{1/2}\label{eq-lmhbound}
    \end{align}
    Since $m = m(n) \to \infty$ as $n \to \infty$, and since $R(1) > 0$, we see the bound in~\eqref{eq-lmhbound} is positive for all $n \gg 1$ so long as $\delta$ and $\epsilon$ are sufficiently small.

    Next, take $x \in [\delta^{1/n},1]$. In this range, conditions $\mathbf{K1}$, $\mathbf{K2}$, and $\mathbf{K3}$ imply that the terms $g_{m+1}x^{m+1} + \cdots + g_{n-m-1}x^{n-m-1}$ dominate, contributing at least $n^{1/8}$ as $n \to \infty$. The other terms contribute $\ll m = o(\log n)$ on this range, giving the desired positivity.

    \smallskip

    We now move on to verifying that the signs of $F'(x)$ are correct on the relevant intervals. We note that for all $x \in [0,(1-\delta)^{1/r_j}]$, the derivative $F'(x)$ is approximated very well by the derivative of $(1-x^s)B(x)$. Indeed, by conditions $\mathbf{K1}$ and $\mathbf{K4}$, we have the following bound for all $x \in [0,(1-\delta)^{1/r_j}]$ and all $n \gg 1$:
    \begin{align} \label{eq-wellapprox2}
        \left|F'(x) - \frac{d}{dx}\big((1-x^s)B(x)\big)\right| & \leq sx^{s-1}\big(\epsilon(1 + \cdots + x^{m}) + n^{\rho_n}(x^{m+1} + \cdots + x^n)\big) + \\[-7pt]
        & \qquad\quad (1-x^s)\big(\epsilon(1 + 2x + \cdots + mx^{m-1}) + n^{\rho_n}((m+1)x^m + \cdots + nx^{n-1})\big) \nonumber \\
        & \leq s(1-x)^{-1}\big(\epsilon + n^{\rho_n}x^{m+1}\big) + (1-x)^{-2}(\epsilon + n^{\rho_n}(m+1)x^m) \ll \epsilon, \nonumber
    \end{align}
    where the implied constant depends on $r$ and $\delta$. By differentiating~\eqref{eq-expandB}, we obtain the identity
    \begin{equation} \label{eq-expandB'}
        \frac{d}{dx}\big((1-x^s)B(x)\big) = Q'(x) + \sum_{\ell = 1}^j (-1)^{\ell}\left(\big(Q'(x) - R'(x)\big)x^{r_\ell} + \big(Q(x)-R(x)\big)r_\ell x^{r_\ell - 1}\right) - o(1).
    \end{equation}
    Even though our choice of the polynomial $B$ is different from the choice made in~\cite[\S8]{MR1915821}, and we chose $Q$ and $R$ to be the negatives of the choices made in \emph{loc.~cit.}, the identity~\eqref{eq-expandB} takes the exact same shape in both settings, and thus their analysis applies with minimal change. In particular, their argument shows that the term $(-1)^i\big(Q(x) - R(x)\big)r_ix^{r_i-1}$ dominates the right-hand side of~\eqref{eq-expandB'} on the interval $[\delta^{1/r_i},(1-\delta)^{1/r_i}]$, and the desired behavior then follows from the fact that $Q(x) - R(x)$ is negative and bounded away from zero on the interval $[\delta^{1/r_1},1]$, so long as $r$ is sufficiently large. 
    
    \medskip \noindent{\emph{Step 2}}. The analysis here is similar to that of \emph{Step 1}, except that both $Q(x)$ and $R(x)$ are negative when $x$ is close to $-1$, as is the middle range of terms $g_{m+1}x^{m+1} + \cdots + g_{n-m-1}x^{n-2m-2}$. For the sake of brevity, we just highlight two important places in which the proof in \emph{Step 1} needs to be modified here. Firstly, in~\eqref{eq-thisthetightbound}, the contribution of $\alpha x^m(1-x^s)$ cannot be ignored, but we observe that this term is $\ll \delta$ on the interval $[-2^{-1/m},-(1-\delta)^{1/r_j}]$, so it is negligible as long as $\delta$ is sufficiently small. Secondly, we are no longer interested in studying the sign of the derivative $F'(x)$; rather, we must show that $F(x) < 0$ on the intervals $[-(1-\delta)^{1/r_i},-\delta^{1/r_i}]$. To do this, it suffices by~\eqref{eq-wellapprox1} (taking $\epsilon$ to be sufficiently small) to prove that $(1-x^s)B(x)$ is negative and bounded away from zero on $[-(1-\delta)^{1/r_i},-\delta^{1/r_i}]$. Combining~\eqref{eq-expandB} and~\eqref{eq-qrindicator}, we see that for all $x \in [-(1-\delta)^{1/r_i},-\delta^{1/r_i}]$,
    \begin{equation} \label{eq-negativebounds}
        \big|(1-x^s)B(x) - t_i(x)Q(x) - (1-t_i(x))R(x)\big| \leq (2j+1)N\delta,
    \end{equation}
    where $t_i(x) = 1 - x^{r_i}$ if $i$ is even and $t_i(x) = x^{r_i}$ if $i$ is odd. Thus, if $\delta$ is sufficiently small, it suffices to see that $t_i(x)Q(x) + (1-t_i(x))R(x)$ is negative and bounded away from zero, but this is true for all sufficiently large $r$ because then both $Q(x)$ and $R(x)$ are negative and bounded away from zero.
\end{proof}

The lower bound in Theorem~\ref{thm-locmaingen} in the case where $n-1,j$ are odd now follows from Lemma~\ref{lem-hconds2} in exactly the same way as the case where $n-1,j$ are even follows from Lemma~\ref{lem-hconds}. In the context of Theorem~\ref{thm-fixedcoeffszero2}, it is also clear that if $k = k(n)$ grows as slowly as $o(\log n)$ that the above estimates continue to hold --- Lemma~\ref{lem-hconds2} needs to be modified to permit fixing more than $m$ coefficients at the top of the polynomial, but this is possible because the behavior of the polynomial on $[-1,1]$ is determined almost entirely by $B(x)$.

\subsection{Upper bound}

\noindent Fix $\delta \in (0, \frac{1}{2})$, and let $V$ be as defined in~\eqref{eq-defofv}. Then restricting $x$ to lie in $V$, we have that 
\begin{equation}
P_{n, j} \le \P\big(\# \{x \in \R: f_{n,S}(x) = 0\} \le j\big) \le 
\P\big(\# \{x \in V: f_{n,S}(x) = 0\} \le j\big).
\end{equation}
Consider the range $T = [\delta \log n, (1 - \delta) \log n]$, chosen so that setting $x = w_1(t) = 1 - e^{-t}$ for $t \in T$ traces out the interval $\mc{I}_1 \defeq [1 - n^{-\delta}, 1 - n^{-(1 - \delta)}]$, while $x = w_2(t) = w_1(t)^{-1}$, $x = w_3(t) = -w_1(t)^{-1}$, and $x = w_4(t) = -w_1(t)$ trace out the remaining closed intervals $\mc{I}_2$, $\mc{I}_3$, and $\mc{I}_4$ that constitute $V$.
Divide $T$ into $R \defeq \lfloor (1 - 2 \delta) \log n \rfloor$ unit-length intervals --- the last one slightly longer if necessary --- with images $J_{(i - 1) R + 1}, ... , J_{iR}$ in $\mc{I}_i$ under $w_i$. If $f_{n,S}$ indeed has at most $j$ zeros in $V$, then there is at least one way of choosing $j$ intervals $J_{i_1}, ... , J_{i_j}$ to ignore, so that $f_{n,S}$ has constant sign on each of the $\ell \le j + 4$ maximal leftover intervals comprising $V \smallsetminus \big(J_{i_1} \cup \cdots \cup J_{i_j}\big)$. Number these maximal leftover intervals $L_1, ... , L_{\ell}$. A quick calculation reveals that, because $j = o\big(\frac{\log n}{\log \log n}\big)$, there are $n^{o(1)}$ choices of $(i_1, ... , i_j)$ and $n^{o(1)}$ choices of $(s_i)_i \in \{\pm 1\}^\ell$ for the signs of $f_{n,S}$ on the $L_i$. Thus, it suffices to obtain an upper bound on the quantity
\begin{equation} \label{eq-thingtoest}
\P\big(\min_{1 \leq i \leq \ell} \inf_{x \in L_i} s_i f_{n,S}(x) >  0 \big)
\end{equation}
for each choice of intervals $J_{i_1},\dots,J_{i_j}$ to ignore and each choice of signs $(s_i)_i \in \{\pm 1\}^{\ell}$.

To estimate~\eqref{eq-thingtoest}, we split off the first $\approx k$ terms of $f_{n,S}$, leaving behind a fully random polynomial to which we can apply the results of~\cite{MR1915821}. More precisely, let $r$ be the smallest even integer such that $r \geq k$. 
Then we write
\begin{equation} \label{eq-splitoffs}
f_{n,S}(x) = \sigma_{n - r}(x) \hat{f}_{n - r}(x) + \sum_{i \in S} 
c_i x^{n - i} + \sum_{\substack{i \in \{1, \dots , r\} \\ i \not\in S}} a_{n-i} x^{n - i}.
\end{equation}
For $s_if_{n,S}(x)$ to be positive, $s_i\sigma_{n-r}\hat{f}_{n-r}(x)$ must be at least as large as the negative of the sum of the remaining terms on the right-hand side~\eqref{eq-splitoffs}. To bound these remaining terms from above, observe that there exists a constant $\xi > 0$ such that for all $x \in V$, $i \in \{1, \dots, r\}$, and $n \gg 1$, we have 
\begin{equation} \label{eq-powerbound}
|x^{n-i}| \leq \xi \times n^{-\delta/2} \sigma_{n - r}(x).
\end{equation} Applying the bound in~\eqref{eq-powerbound} to the terms of degree $n-i$ where $i \in S$, we find for all $n \gg 1$ that
\begin{align} \label{eq-seprand}
\P\bigg(\min_{1 \leq i \leq \ell} \inf_{x \in L_i} s_i f_{n,S}(x) >  0 \bigg) \le 
\P\bigg(\min_{1 \leq i \leq \ell} \inf_{x \in L_i} s_i \hat{f}_{n - r}(x) > - r \xi n^{-\delta/4} \bigg)
+
\sum_{\substack{i \in \{1, \dots , r\} \\ i \not\in S}} \P\big(|a_i| \ge n^{\delta/4} \big).
\end{align}
Note that the summation on the right-hand side of~\eqref{eq-seprand} is $O(n^{-3})$ --- indeed, it is $O(n^{-a})$ for every $a > 0$, because the coefficient law has finite moments of all orders. As for the first term on the right-hand side of~\eqref{eq-seprand}, this is shown in~\cite{MR1915821} to be bounded by $n^{-(1-2\delta)b + o(1)}$. Taking the limit as $\delta \to 0$ yields the desired upper bound. It is also clear that if $k = k(n)$ (and hence $r$) grows as slowly as $o(\log n)$ that the above estimates continue to hold.

\section*{Acknowledgments}

 \noindent  
 We are grateful to Kumar Murty, Bjorn Poonen, Qi-Man Shao, and Melanie Matchett Wood for helpful conversations. We also thank Will FitzGerald for pointing us to the references~\cite{MR4499280} and~\cite{PhysRevLett.121.150601}. AS was supported by the National Science Foundation, under the Graduate Research Fellowship, as well as Award No.~2202839.

\bibliographystyle{alpha}
\bibliography{bibfile}

\end{document}